\documentclass[11pt]{article}
\usepackage{amsmath,amsfonts,amssymb,amsthm,fancyhdr,bm,mathtools,enumitem,bbm}
\usepackage[hyphens]{url}
\usepackage[hidelinks]{hyperref}
\usepackage{fullpage}
\usepackage[dvipsnames]{xcolor}
\usepackage[capitalize]{cleveref}
\usepackage[color=Purple!50!white,textsize=tiny]{todonotes}
\usepackage[numbers,sort]{natbib}
\usepackage{mathtools}
\usepackage{tikz,ifthen}
\usepackage{comment}
\usepackage{dsfont}
\usepackage{soul}

\usetikzlibrary{decorations.markings,arrows.meta}
\tikzset{vert/.style={draw, fill=black, circle, inner sep=2pt}}

\newtheorem{theorem}{Theorem}[section]
\newtheorem{lemma}[theorem]{Lemma}
\newtheorem{proposition}[theorem]{Proposition}

\newtheorem{corollary}[theorem]{Corollary}

\theoremstyle{definition}
\newtheorem{definition}[theorem]{Definition}

\newtheorem{claim}{Claim}
\makeatletter
\@addtoreset{claim}{section}
\makeatother

\crefname{equation}{equation}{equations}
\crefname{lemma}{Lemma}{Lemmas}
\crefname{proposition}{Proposition}{Propositions}
\crefname{claim}{Claim}{Claims}
\crefname{theorem}{Theorem}{Theorems}
\crefname{conjecture}{Conjecture}{Conjectures}
\crefname{figure}{Figure}{Figures}

\AddToHook{env/lemma/begin}{\crefalias{theorem}{lemma}}
\AddToHook{env/proposition/begin}{\crefalias{theorem}{proposition}}
\AddToHook{env/corollary/begin}{\crefalias{theorem}{corollary}}
\AddToHook{env/conjecture/begin}{\crefalias{theorem}{conjecture}}
\AddToHook{env/claim/begin}{\crefalias{theorem}{claim}}
\AddToHook{env/question/begin}{\crefalias{theorem}{question}} 
\AddToHook{env/definition/begin}{\crefalias{theorem}{definition}} 
\AddToHook{env/remark/begin}{\crefalias{theorem}{remark}} 

\newlist{lemenum}{enumerate}{1}
\setlist[lemenum]{label=(\alph*), ref=\thelemma(\alph*)}
\crefalias{lemenumi}{lemma}

\let\leq\leqslant
\let\geq\geqslant

\title{Thinning and sprinkling: from robust sampling to almost Hamiltonicity}
\author{Micha Christoph \thanks{Department of Mathematics, ETH Z\"{u}rich, 8092 Z\"urich, Switzerland. Email: \texttt{micha.christoph@math.ethz.ch}. Research supported by SNSF Ambizione Grant No. 216071.} \and Zach Hunter\thanks{Department of Mathematics, ETH Z\"{u}rich, 8092 Z\"urich, Switzerland. Email: \texttt{zach.hunter@math.ethz.ch}.} \and Benny Sudakov \thanks{Department of Mathematics, ETH Z\"{u}rich, 8092 Z\"urich, Switzerland. Email: \texttt{benjamin.sudakov@math.ethz.ch}. Research supported in part by SNSF Grant 
200021-228014.}}
\date{}

\begin{document}

\maketitle
\begin{abstract}
    We develop the thinning--sprinkling technique, a general method for proving robustness of graph properties under random vertex sampling. Using it, we show that random induced subgraphs of tough graphs, high-degree connected vertex-transitive graphs, and nearly regular sublinear expanders retain strong connectivity or expansion properties with very high probability. We also prove that every $k$-connected graph with $k=\omega(\log n)$ contains a spanning bipartite subgraph that is $\Omega(k)$-connected.
    
    Using these robustness results, we further develop a general framework for constructing almost Hamilton cycles from randomly sampled highly connected subgraphs. As a consequence, we show that tough graphs, connected vertex-transitive graphs and nearly regular expanders contain a cycle of length at least $(1-o(1))n$ whenever the toughness or degree is polylogarithmically large. This gives asymptotic solutions of longstanding conjectures of Chv\'atal and Lov\'asz on Hamiltonicity of tough and vertex-transitive graphs.
\end{abstract}
\section{Introduction}
Passing from a mathematical structure to a random subset of its elements, a procedure called subsampling, is a fundamental idea in mathematics. The resulting random substructure can often reveal useful information about the original one. For example, random Fourier sampling preserves the geometry of sparse signals and underlies compressed sensing, while subsampling Cayley graphs has been used to transfer analytic information and study structural questions in geometric and measured group theory. More generally, if a property persists under subsampling, this indicates that it is present in the original structure in a robust form. This perspective is central to random graph theory, and more recently there has been growing interest in the robustness of classical results in graph theory under subsampling.

Complementing the robustness perspective, subsampling is itself a powerful tool. A prominent example is graph sparsification, where random edge sampling produces a much sparser graph while approximately preserving connectivity and expansion properties of the original one. Another is property testing, where one seeks to determine efficiently whether a structure has a given property or is far from having it, often by inspecting only a small random sample.

The results presented in this paper illustrate both of these aspects of subsampling. We develop the \emph{thinning--sprinkling technique}, a method for showing that certain graph properties are robust under random vertex subsampling. We then develop a framework that uses such robustness results as a black-box to obtain long cycles in several well-studied classes of graphs, leading to progress toward longstanding conjectures of Chv\'atal and Lov\'asz on Hamiltonicity.

\subsection{The thinning-sprinkling technique}
In this subsection, we describe the basic idea behind the thinning--sprinkling technique and state several consequences of it. Although we believe these results are interesting in their own right, our main emphasis is on the method rather than on any individual application. The results below are intended to illustrate the flexibility and strength of the approach. We believe that thinning--sprinkling provides a useful general framework for studying robustness under random sampling and may have applications beyond those developed here.

We begin with a brief informal description of the thinning--sprinkling technique, leaving the technical details to Section~\ref{sec: connectivity}. Let $G$ be a graph, let $p\in[0,1]$, and let $V_p\subseteq V(G)$ be a random subset containing every vertex independently with probability $p$. Suppose that, given a property $\mathcal P$, we want to show that $G[V_p]$ is likely to satisfy $\mathcal P$. We consider nested random sets $V_{p/4}\subseteq V_{p/2}\subseteq V_p$ from two complementary perspectives. Here $V_{p/2}$ is obtained by subsampling $V_p$ with probability $1/2$, and $V_{p/4}$ is obtained by subsampling $V_{p/2}$ once more with probability $1/2$.

The thinning step consists of finding an auxiliary property $\mathcal P'$ of $V_{p/4}$ and $V_{p/2}$ with two features. First, $\mathcal P'$ should be likely to hold conditional on $G[V_p]$ not satisfying $\mathcal P$. Second, $\mathcal P'$ should be easier to control than $\mathcal P$. To understand $\mathcal P'$, we switch to the sprinkling perspective. Once $V_{p/4}$ is revealed, the additional vertices in $V_{p/2}\setminus V_{p/4}$ may be viewed as adding an independent random set of vertices, each included with probability roughly $p/4$.
We then show that this sprinkling is very likely to interact with $V_{p/4}$ in a way that prevents $\mathcal P'$ from holding. Thus, thinning gives a lower bound on the probability of $\mathcal P'$, while sprinkling gives a conflicting upper bound, forcing $G[V_p]$ to satisfy $\mathcal P$ with the desired probability.
In principle, the thinning--sprinkling technique can be applied to a wide range of properties $\mathcal P$. We have found it particularly useful when $\mathcal P$ is a global property expressing a strong form of connectivity, such as expansion, and $\mathcal P'$ implies, among other things, that $G[V_{p/2}]$ is disconnected.

The combination of thinning and sprinkling is not entirely new, but previous applications have largely focused on specific graph classes such as hypercubes and expanders. A first version of it appeared in work of Diskin and Krivelevich in \cite{Sahar} and it was developed much further in \cite{mixing_time} by Anastos, Diskin, Lichev and Zhukovskii to resolve well-known conjectures about the typical diameter and mixing time of the giant component of the hypercube under bond percolation. Both of these results focused on edge subsampling. Recently, Christoph, Diskin, Lichev and Sudakov \cite{polytope} transferred the method to vertex subsampling on the hypercube in their work on the expansion of random $0/1$-polytopes. 

A related line of work, originating in theoretical computer science, concerns the preservation of connectivity under random sampling. In 1994, Karger~\cite{karger} gave a beautiful randomized algorithm which implies that, in a $k$-edge-connected graph on $n$ vertices, the number of edge cuts of size $m$ is at most $n^{2m/k}$. A simple union bound then shows, under suitable quantitative assumptions, that edge-connectivity is preserved under edge subsampling. 
There is no known equivalent of Karger's algorithm for vertex cuts. This makes preserving vertex-connectivity in random induced subgraphs substantially more challenging. Censor-Hillel, Ghaffari, Giakkoupis, Haeupler and Kuhn~\cite{connected_subsampling} nevertheless proved that vertex-connectivity is robust under vertex subsampling. Their argument shares some of the general flavor of our approach, but the details are rather different from the thinning--sprinkling method developed here. We give a new proof of their result which also serves as a detailed illustration of our method.

\begin{theorem}\label{thm: subsampling k-connected}
Let $G$ be a $k$-connected graph and $p\in[0,1]$ such that $p^2k=\omega(\log n)$. Then, $G_p$ is $\Omega(p^2 k)$-connected with probability at least $1-2^{-\Omega(p^2k)}$.
\end{theorem}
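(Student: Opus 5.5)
We follow the thinning--sprinkling scheme described above, with $V_{p/4}\subseteq V_{p/2}\subseteq V_p$ nested as in the informal description. Set $s=cp^2k$ for a small constant $c>0$. Suppose $G_p=G[V_p]$ is not $s$-connected. Then there is a set $S\subseteq V_p$ with $|S|<s$ such that $G[V_p\setminus S]$ is disconnected. Fix such an $S$, and fix a partition $V_p\setminus S=A\cup B$ with no edges between $A$ and $B$.

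\textbf{Thinning.} Pass to $V_{p/2}$ and $V_{p/4}$. With probability at least $2^{-s}$, no vertex of $S$ survives to $V_{p/4}$. Conditioned on this, $G[V_{p/4}]$ and $G[V_{p/2}]$ are still split by the cut $(A,B)$, provided both sides keep a vertex.

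To ensure both sides keep a vertex, we first show that with probability $1-2^{-\Omega(pk)}$ every vertex has $\Omega(pk)$ neighbours in $V_{p/4}$. Call this event $\mathcal{E}$; it follows from a Chernoff bound and a union bound, using $pk\ge p^2k=\omega(\log n)$ and the fact that $\delta(G)\ge k$. On $\mathcal{E}$, each side of the cut is large in $V_{p/4}$: a vertex of $A\cap V_{p/4}$ has all its $V_{p/4}$-neighbours in $A\cup S$.

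The auxiliary property $\mathcal P'$ is then: $G[V_{p/2}]$ is disconnected, with a component structure inherited from $V_{p/4}$ in which every part has size $\Omega(pk)$. Thinning gives
\[
\Pr[\mathcal P']\geq 2^{-s}\,\Pr[G_p \text{ not } s\text{-connected}]-2^{-\Omega(pk)} .
\]

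\textbf{Sprinkling.} Reveal $V_{p/4}$. Consider any partition of the components of $G[V_{p/4}]$ into two nonempty groups $X$ and $Y$. By Menger's theorem in $G$, there are $k$ internally vertex-disjoint $X$--$Y$ paths. Shortcut each path so that it meets $V_{p/4}$ only at its endpoints; its interior then lies in $V\setminus V_{p/4}$.

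The key point is that the interiors can be taken short. Any vertex outside $V_{p/4}$ has $\Omega(pk)$ neighbours in $V_{p/4}$ on $\mathcal{E}$. So if a single interior vertex is adjacent to both $X$ and $Y$, it is a length-two connector. A vertex adjacent only to, say, $X$ can be absorbed, and we look at the next one. To make this rigorous, I would count vertices $v\notin V_{p/4}$ that have neighbours in both groups. Each such $v$ lies in $V_{p/2}$ independently with probability about $p/4$, and any one of them merges $X$ and $Y$ in $V_{p/2}$.

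The \textbf{main obstacle} is showing that there are at least $\Omega(pk)$ such common-neighbour vertices, or more generally $\Omega(k)$ disjoint short connectors, simultaneously for every bipartition. A union bound is needed over the possible bipartitions $(X,Y)$, and there can be exponentially many of them.

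I would handle this as follows. Restrict to bipartitions that $S$ could actually induce, so that the relevant cut is determined by a set $T\subseteq V\setminus V_{p/4}$ of size below $s$ separating $X$ from $Y$. By Menger, with a small cut $T$ there remain $k-|T|$ disjoint connecting paths avoiding it. Counting choices of $T$ gives at most $n^{s}$ candidates. For a fixed $T$, the probability that no connecting path avoiding $T$ enters $V_{p/2}$ is at most $(1-p/4)^{\Omega(k)}$ per path of length one; for longer paths I would iterate this over $O(1/p)$ levels. This gives a probability of $e^{-\Omega(p^2k)}$.

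Choosing $c$ small, the union bound $n^{s}e^{-\Omega(p^2k)}$ is still $2^{-\Omega(p^2k)}$, using $p^2k=\omega(\log n)$. Hence $\Pr[\mathcal P']\le 2^{-\Omega(p^2k)}$ from sprinkling. With $c$ small, the factor $2^{s}$ is absorbed, and combining the two bounds gives $\Pr[G_p\text{ not } s\text{-connected}]\le 2^{-\Omega(p^2k)}$.
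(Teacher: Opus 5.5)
\textbf{There is a genuine gap in your sprinkling step.} It sits exactly at the point you flag as the main obstacle, and the proposed fix does not close it.

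For a \emph{fixed} bipartition $(X,Y)$ of the components of $G[V_{p/4}]$, your bound is fine, and easier than you suggest. On $\mathcal E$, every vertex outside $V_{p/4}=X\cup Y$ has a neighbour in $X\cup Y$. So on each of the $k$ Menger paths, the last interior vertex adjacent to $X$ is either adjacent to $Y$ or followed by a vertex that is. This gives $k$ internally disjoint connectors of length at most $3$ with interiors outside $V_{p/4}$, and failure probability at most $(1-p^2/16)^k$. No iteration over $O(1/p)$ levels is needed.

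The problem is that to contradict $\mathcal P'$ you must merge \emph{all} components of $G[V_{p/4}]$, i.e.\ beat every bipartition simultaneously. A priori you only know there are $O(n/(pk))$ components, hence up to $2^{O(n/(pk))}$ bipartitions, and there is no Karger-type count of vertex cuts to help. Indexing by $T$ does not fix this, for two reasons.
\begin{itemize}
\item Since $|T|<s<k$ and $G$ is $k$-connected, $T$ separates nothing in $G$. Whatever it separates inside $G[V_{p/2}\cup T]$ depends on the sprinkled vertices, so $T$ does not determine $(X,Y)$ before the sprinkling is exposed. You would still have to union bound over pairs $(T,(X,Y))$.
\item Even granting such a reduction, the arithmetic fails: $n^{s}=2^{cp^2k\log_2 n}$, which for every fixed $c>0$ swamps $e^{-\Omega(p^2k)}$ because $\log n\to\infty$. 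The hypothesis $p^2k=\omega(\log n)$ pays only for union bounds over $n^{O(1)}$ events. Taking $s=cp^2k/\log n$ would repair the count, but it only gives $\Omega(p^2k/\log n)$-connectivity.
\end{itemize}

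\textbf{The missing idea is the paper's reduction to pairs of seeded components.} There are only $n^2$ such pairs, and this suffices because if every pair merges then all components merge.
\begin{itemize}
\item The paper sprinkles in $m=p^2k/64$ rounds at rate $q=8/(pk)$. For fixed $u,v\in U_0$, it follows $C_i(u)$ and $C_i(v)$ through the exploration filtration $\mathcal F_i(u,v)$.
\item In each round, either the two components are already joined by $k/2$ internally disjoint paths of length at most $3$, or the following happens. Take the $k$ Menger paths of minimum total length. At least $k/2$ of them begin $axy$ with $x\in N_G(C_i(u))$ and $y$ whose $U_0$-membership is unexposed. Then with probability above $1/2$ some such $x\in U_{i+1}$ and $y\in U_0$, and the $U_0$-part of $C_i(u)$ grows.
\item Lemma~\ref{lem: minimal vertex} lets one pick $u$ so that every such growth at least doubles this part. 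So growth happens at most $\log_2 n$ times, and the $\Omega(p^2k)$ rounds force every pair of seeded components of $G_m$ to have $k/2$ short connectors.
\item A final $p/8$-sprinkle then merges each pair with failure probability $e^{-\Omega(p^2k)}$, which survives the union bound over $n^2$ pairs.
\end{itemize}

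\textbf{Two smaller issues in your thinning step.}
\begin{itemize}
\item For $G[V_{p/2}]$ to inherit the cut you need $S\cap V_{p/2}=\varnothing$, which has probability $2^{-|S|}$. The condition $S\cap V_{p/4}=\varnothing$ is not enough.
\item $\mathcal E$ does not make $A\cap V_{p/4}$ and $B\cap V_{p/4}$ nonempty. You should also force a fixed vertex of each side into $V_{p/4}$, at constant cost.
\end{itemize}
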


A natural refinement of vertex-connectivity is toughness, a well-studied parameter which controls not only whether a vertex set disconnects a graph, but also how many components its removal can create. A graph $G$ is $t$-tough if, for every $S\subseteq V(G)$ such that $G-S$ has $k\geq2$ components, we have $|S|\geq tk$. By convention, complete graphs have infinite toughness. As an extension of the connectivity result above, we show that this stronger form of connectivity is itself robust under random vertex subsampling.
\begin{theorem}\label{thm: subsample tough}
     Let $G$ be a $t$-tough graph and $p\in[0,1]$ such that $p^2 t=\omega(\log n)$. Then, $G[V_p]$ is $\Omega( p^2t)$-tough with probability at least $1-2^{-\Omega( p^2t)}$.
\end{theorem}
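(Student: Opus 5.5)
We follow the thinning--sprinkling scheme, using the nested sets $V_{p/4}\subseteq V_{p/2}\subseteq V_p$. Note that $t$-toughness (for non-complete $G$) implies $2t$-connectivity. By Theorem~\ref{thm: subsampling k-connected} applied to $G$ with $k=2t$, we may therefore assume that $G[V_p]$ is $c p^2t$-connected, at the cost of probability $2^{-\Omega(p^2t)}$.

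\textbf{Reduction to many components.} Suppose $G[V_p]$ is not $\varepsilon p^2 t$-tough. Then there is $S\subseteq V_p$ such that $G[V_p]-S$ has $m\geq 2$ components and $|S|<\varepsilon p^2 t\, m$. Connectivity gives $|S|\geq cp^2t$, so $m\geq c/\varepsilon$ is a large constant. Moreover, $|S|/m$ is small compared to $p^2t$, which is the regime Theorem~\ref{thm: subsampling k-connected} cannot see.

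\textbf{Thinning.} Next we pass to $V_{p/2}$. Each component $C$ of $G[V_p]-S$ survives in $V_{p/2}$ (has a vertex in $V_{p/2}\setminus S$) with probability at least $1/2$, independently across components given $V_p$. Hence, with constant probability, at least $m/4$ components survive. The auxiliary property $\mathcal P'$ is then: there exist $S'=S\cap V_{p/2}$ and at least $m/4$ disjoint nonempty vertex sets $C_i'\subseteq V_{p/2}\setminus S'$ with no edges of $G[V_p]$ between different $C_i'$. In particular $G[V_{p/2}]-S'$ has at least $m/4$ components with $|S'|\leq |S|$. Crucially, the whole set $V_p\setminus (S\cup\bigcup C_i)$ is empty: every vertex of $V_p$ lies in $S$ or in some component. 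So sprinkled vertices of $V_p\setminus V_{p/4}$ that are adjacent to two different components would contradict the structure.

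\textbf{Sprinkling and the union bound.} Reveal $V_{p/4}$ and consider the components of $G[V_{p/4}]-S$, each meeting some original component. We use toughness of $G$ to lower bound the sprinkled vertices joining them. Let $T=N_G$-boundary: consider the set $X$ of vertices of $G$ adjacent to at least two of the $m' \geq m/8$ surviving pieces. In $G$, removing $X\cup S$ together with the vertices outside $V_{p/4}$ that are adjacent to only one piece leaves at least $m'$ components. Grouping pieces, toughness forces the set $X$ of "bridging" vertices to have size at least about $tm'-|S|\geq tm'/2$. Each vertex of $X$ lands in $V_{p/2}\setminus V_{p/4}$ with probability $\approx p/4$, and such a vertex would merge two components. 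Merging destroys the certificate unless it lies in $S$, and $|S|\leq \varepsilon p^2tm$. Therefore the probability that all these vertices avoid the sprinkle (outside $S$) is $\exp(-\Omega(ptm))$. We then union bound over certificates. A certificate can be encoded by $S$ and one representative vertex per component in $V_{p/4}$, giving at most $n^{O(|S|+m)}=\exp(O(\varepsilon p^2tm\log n+m\log n))$ choices. Since $p t\gg p^2t=\omega(\log n)$ and $\varepsilon$ is small, the sprinkling bound $\exp(-\Omega(ptm))$ wins, yielding failure probability $2^{-\Omega(p^2 t)}$ after combining with the constant-probability thinning step (dividing by a constant).

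\textbf{Main obstacle.} The hard step is making the toughness-based count of bridging vertices rigorous. Pieces in $V_{p/4}$ are not components of $G$ minus a small set, because vertices outside $V_p$ may connect them. We must therefore define the separator in $G$ carefully, so that the vertices adjacent to two or more pieces are exactly those whose sprinkling ruins the certificate. The cleanest choice is to take the separator to be the set of all vertices with neighbors in two distinct pieces, plus $S$. If it is smaller than $tm'/2$, then $G$ minus it has at least $m'$ components, contradicting $t$-toughness. If I cannot encode the certificate cheaply enough, the fallback is to encode pieces via small connected witnesses (trees of size $O(1/p)$) in the style of the proof of Theorem~\ref{thm: subsampling k-connected}.
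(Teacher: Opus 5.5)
\textbf{There is a genuine gap.} The step that fails is your claim that toughness forces the set $X$ of vertices with neighbours in two distinct pieces to have size about $tm'-|S|$.

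The set $X\cup S$ is not a separator of $G$. Pieces can be joined by paths of length at least $3$ through vertices outside $V_p$, and no internal vertex of such a path need see two pieces. Pieces can also lie at distance at least $3$ in $G$. For example, take a long cycle and blow up each position into a clique $K_t$, with consecutive cliques completely joined; this graph is $t$-tough. Pieces in positions at cyclic distance at least $3$ have no common neighbour, so $X$ may even be empty. If you also delete the vertices adjacent to a single piece, you do get a separator, but then toughness only bounds the size of that much larger set, not $|X|$.

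What toughness actually yields is Lemma~\ref{lem: menger for tough}. For $k$ pieces it gives $kt/2$ paths meeting only in the pieces. Each is either a connecting path of length at most $3$, which one sprinkle of density $\Theta(p)$ completes only with probability $\Theta(p^2)$, or a length-$2$ escape path $axy$ with $y\notin\bigcup_h(V_h\cup N_G(V_h))$, which merges nothing by itself. Handling the escape paths is the missing idea.

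The paper sprinkles in $kp^2t/128$ rounds of density $q=16/(kpt)$. In each round, with probability at least $1/2$, some escape path has $x\in U_{i+1}$ and $y\in U_0$, which enlarges the $U_0$-part of a seeded component. Choosing representatives via Lemma~\ref{lem: minimal vertex} makes every enlargement a doubling, so more than $k\log_2 n$ successful rounds are impossible. Hence at the end any $k$ seeded components are joined by $kt/4$ disjoint paths of length at most $3$. A final $p/8$-sprinkle then merges two of them with probability $1-e^{-\Omega(kp^2t)}$, which survives a union bound over $n^k$ tuples. Your fallback of encoding pieces by small trees addresses counting, not this distance problem.

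Second, even granting the count, your union bound does not close. Keeping $S'=S\cap V_{p/2}$ in the certificate forces you to sum over $S$. This costs $n^{|S|}=\exp(O(\varepsilon p^2tm\log n))$ against a gain of $\exp(-\Omega(ptm))$. That requires $\varepsilon p\log n$ to be small, which fails for instance when $p$ is constant. The comparison $pt\gg p^2t$ is beside the point, since $\log n$ multiplies $\varepsilon p^2tm$.

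The paper avoids summing over separators altogether. In its thinning step it demands $S\cap V_{p/2}=\varnothing$ and $k$ representatives in $V_{p/4}$, at cost $2^{-|S|-2k}\geq 2^{-2k\varepsilon p^2t}$. The resulting obstruction, namely $k$ components of $G[V_{p/2}]$ each meeting $V_{p/4}$, no longer involves $S$. So the price $2^{-|S|}$ is compared with $2^{-\Omega(kp^2t)}$ with no $\log n$ loss.

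Two further mismatches in your argument:
\begin{itemize}
\item Your certificate counts components of $G[V_{p/2}]-S'$ that may contain no vertex of $V_{p/4}$. Sprinkling onto $V_{p/4}$ cannot control these, and the certificate is not destroyed by a single merge.
\item The fact that $V_p\setminus(S\cup\bigcup_iC_i)=\varnothing$ is a thinning-side property. It disappears once $V_{p/2}\setminus V_{p/4}$ is treated as fresh randomness.
\end{itemize}
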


Another central class is that of vertex-transitive graphs. A graph is vertex-transitive (transitive for short) if for any two vertices there is an automorphism mapping one to the other. Such graphs play an important role in graph theory, group theory, and probability: Cayley graphs form a fundamental family of examples, and transitive graphs also arise naturally in the study of random walks and percolation. The symmetry of transitive graphs forces strong connectivity properties. In particular, a classical result of Mader \cite{mader} implies that every connected $d$-regular vertex-transitive graph is $d$-edge-connected, and consequently $1$-tough. Our next result shows that this toughness is essentially preserved under subsampling.
\begin{theorem}\label{thm: vertex-transitive subsampling}
    The following holds for every absolute constant $\varepsilon>0$. Let $G$ be a connected $d$-regular vertex-transitive graph on
    $n$ vertices and let $p\in[0,1]$ satisfy $p^2d=\omega(\log n)$. Then $G_p$ is $(1-\varepsilon)$-tough with probability at least
    $1-2^{-\Omega(p^2d)}$.
\end{theorem}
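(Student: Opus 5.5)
Throughout, $c_1,c_2,\dots$ denote small absolute constants chosen at the end. The plan is to run the thinning--sprinkling scheme with the three nested sets $V_{p/4}\subseteq V_{p/2}\subseteq V_p$ described above, and to use edge-connectivity as the structural input in place of $k$-connectivity. Suppose $G[V_p]$ is not $(1-\varepsilon)$-tough. Then there is $S\subseteq V_p$ such that $G[V_p]-S$ has $m\geq 2$ components $C_1,\dots,C_m$ with $|S|<(1-\varepsilon)m$.

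\textbf{Input from symmetry.} Mader's theorem makes $G$ $d$-edge-connected. More strongly, the Mader/Watkins atom argument shows that every set $A$ with $|A|\le n/2$ satisfies $e(A,A^c)\geq d\cdot\min\{|A|, \Omega(d)\}$; small sets expand edge-wise almost perfectly.

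\textbf{Components of $G_p-S$.} Each component $C_i$ is separated in $G_p$ by $S$. In $G$, however, $C_i$ sends $d|C_i|-2e(C_i)$ edges out, and these edges go to $S$ or to $V\setminus V_p$. Vertices of $V_p$ typically have about $pd$ neighbours in $V_p$ (Chernoff plus a union bound, using $pd=\omega(\log n)$). Hence $\sum_i e_{G_p}(C_i,S)\geq(1-o(1))\,pd\cdot m$ roughly, since each component contains at least one vertex whose whole $V_p$-neighbourhood lies inside $C_i\cup S$. On the other hand, $\sum_i e(C_i,S)\le\sum_{s\in S}\deg_{G_p}(s)\le(1+o(1))\,pd|S|<(1-\varepsilon/2)\,pd\,m$. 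This is not a contradiction by itself, because components may be large and have internal edges.

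\textbf{Reducing to small components.} Components of size at least $c_1 pd$ can number at most $n/(c_1pd)$. Here I would invoke \cref{thm: subsample tough}-style control: $G$ is $1$-tough, so it is $t$-tough only with $t\approx 1$, which is too weak. Instead, the thinning step should give that a positive fraction of the $C_i$ are \emph{small} (size $<c_1pd$) and almost isolated. If at least $\varepsilon m/4$ components are large, each large $C_i$ together with $S$ certifies a cut in $G_p$. Passing to $V_{p/2}$ retains, with constant probability, a disconnection of $G[V_{p/2}]$ into many pieces each of size $\Omega(pd)$ separated by few vertices. We then sprinkle $V_{p/2}\setminus V_{p/4}$: a fixed piece $A$ of $V_{p/4}$ of size $\Omega(pd)$ has, by edge-expansion, at least $\Omega(d\cdot\min(|A|,d))$ edges to outside vertices that are absent from $V_{p/4}$. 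The sprinkled set hits their endpoints in many places unless those endpoints lie in $S$. This yields probability $2^{-\Omega(p^2d)}$ per configuration. Union-bounding over configurations requires counting pieces via their boundary, using a Karger-type or spanning-tree encoding of sets of size up to $pd$, which costs at most $n^{O(1)}\cdot e^{O(|A|\log d)}$.

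\textbf{Main obstacle.} The hardest step is balancing the union bound in the sprinkling step against the probability gain $2^{-\Omega(p^2 d)}$, especially for mid-sized components. I would first try to encode each component by a single vertex plus its size, pay $n\cdot d^{O(s)}$, and use that the sprinkled hits grow like $p\,d\,s$. This works for $s\lesssim p^2d/\log d$, and the assumption $p^2d=\omega(\log n)$ is exactly what absorbs the factor $n$ in the union bound. Finally, the small-component regime gives the sharp constant: for tiny components, the averaging over the edges leaving $C_i$ yields $|S|\geq(1-o(1))m$, which is where the $(1-\varepsilon)$ comes from.
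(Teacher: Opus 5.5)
\textbf{Gap 1: the structural input is false.} Mader's theorem only gives $e(A,A^c)\geq d$. There is no bound $e(A,A^c)\geq d\cdot\min\{|A|,\Omega(d)\}$ valid for all $|A|\leq n/2$. For example, $K_{d-1}\,\square\,C_N$ (a long cycle of cliques joined by perfect matchings) is connected, $d$-regular and vertex-transitive. Yet the union $A$ of the cliques over half of the cycle has $|A|=n/2$ and $e(A,A^c)=2(d-1)$. Expansion of the type you want holds only inside the blocks of Lemma~\ref{lem: weak regularity}. The sparse structure between blocks is what the random-automorphism spanning-tree argument of Lemma~\ref{lem: vertex-transitive toughness} handles, giving $|S|\geq\frac1{125}\sum_i\min\{|X_i|,d\}$ for every separator $S$.

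\textbf{Gap 2: per-component expansion is the wrong tool anyway.} The obstruction to toughness is precisely many components whose boundary edges all land in one common small set. Expansion of each $C_i$ separately says nothing against that; your phrase ``unless those endpoints lie in $S$'' is exactly the unhandled case. The sprinkling needs a genuinely multi-component statement, Lemma~\ref{lem: vertex-transitive connectors}. For $k\geq2$ pairwise non-adjacent connected sets of size at least $r$, it gives paths that are disjoint outside the sets: $N_1$ paths of length $2$ joining two of the sets, and $N_2$ paths that are either length-$3$ joining paths or length-$2$ escape paths, with $(d/r)N_1+N_2\geq dk/300$. Your proposal has no substitute for this.

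\textbf{Gap 3: the sprinkling mechanism is missing.} Sprinkled vertices in $N_G(A)$ merely enlarge $A$. They need not reduce the number of seeded components, and a component can grow repeatedly while staying separate. The paper handles this with multi-round sprinkling. In each round, conditioned on the exploration, with probability at least $1/2$ either some seeded component's $U_0$-part grows (via an escape path), or a connecting family as above already exists. Lemma~\ref{lem: minimal vertex} lets one choose seeds $v_1,\dots,v_k$ so that every growth at least doubles, so growth happens at most $k\log_2 n$ times. A final $p/8$-sprinkling then merges two components through the connecting paths. Consequently the union bound is only over $k$-tuples of seed vertices, costing $n^k=2^{o(kp^2d)}$. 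This removes the configuration-counting problem you name as the main obstacle, which you only resolve for $s\lesssim p^2d/\log d$.

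\textbf{Smaller issues in the thinning step.} Your edge count from $S$ is the right start, but two points need fixing.
\begin{itemize}
\item Forcing $S\cap V_{p/2}=\varnothing$ costs $2^{-|S|}$, not a constant. It must therefore be beaten by a sprinkling bound $2^{-\Omega(kp^2d)}$ with $k=\Theta_\varepsilon(m)$.
\item A component of size $\Omega(pd)$ can shatter into tiny pieces in $V_{p/4}$. The paper instead keeps the at least $\varepsilon m/3$ \emph{good} components, in which every vertex has at most $(1-\varepsilon/3)pd$ neighbours in $S$, hence at least $\varepsilon pd/6$ inside its component. This minimum degree survives thinning. It ensures every piece of $C_i\cap V_{p/4}$ has size at least $r=\varepsilon pd/100$, which is exactly what the connector lemma needs.
\end{itemize}
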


Our third result concerns sublinear expanders. Introduced by Koml\'os and Szemer\'edi~\cite{komlos-szemeredi} in their work on clique subdivisions, sublinear expanders have developed into a powerful tool in extremal graph theory, with applications to a wide range of problems; see the surveys~\cite{letzter-survey,montgomery-survey} for further examples. There are several notions of sublinear expansion in the literature, tailored to different applications. We work with the following particularly simple one.
\begin{definition}
We say that $G$ is a $\lambda$-expander if for every $U\subseteq V(G)$ with $|U|\leq |V(G)|/2$ we have
$|N_G(U)|\geq \lambda |U|$,
where $N_G(U)$ denotes the external neighborhood of $U$.
\end{definition}

Random subsampling is frequently used in arguments involving sublinear expanders: one passes to a random set of vertices and then proves, usually by a problem-specific argument, that the particular properties needed later survive. For some spectral notions of expansion, general subsampling results are already known~\cite{spectral}. We show that, for nearly regular graphs, expansion itself is preserved under vertex subsampling. This allows the resulting random induced subgraph to be treated directly as another expander, avoiding a separate analysis in each application; we will use this perspective later in our study of vertex-transitive graphs.

\begin{theorem}\label{thm: regular expander subsampling}
Let $\lambda\leq 1$, let $G$ be a $\lambda$-expander with minimum degree $d$ and maximum degree $(1+o(1))d$. Let $p\in[0,1]$ be such that $p^2\lambda d=\omega(\log n)$. Then, $G[V_p]$ is an $\Omega(p\lambda)$-expander with probability at least $1-2^{-\Omega(p^2\lambda d)}$.
\end{theorem}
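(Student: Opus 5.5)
We apply the thinning--sprinkling scheme with nested sets $V_{p/4}\subseteq V_{p/2}\subseteq V_p$. Write $q=p/4$ and $\mu=c\,p\lambda$ for a small constant $c>0$. Call $W\subseteq V_p$ \emph{bad} if $|W|\le |V_p|/2$ and $|N_{G[V_p]}(W)|<\mu|W|$. We must show that a bad set exists with probability $2^{-\Omega(p^2\lambda d)}$.

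\textbf{Preliminaries.} By Chernoff and a union bound over vertices (using $p^2\lambda d=\omega(\log n)$ and $\lambda\le1$), every vertex has $(1\pm o(1))pd$ neighbours in $V_p$ and $(1\pm o(1))qd$ neighbours in $V_q$, except with probability $n\,2^{-\Omega(pd)}$. Also $|V_p|=(1\pm o(1))pn$.

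\textbf{Small bad sets.} Suppose $|W|\le \varepsilon d$, say $\varepsilon=\lambda/10$. Every vertex of $W$ has about $pd$ neighbours in $V_p$, of which at most $|W|$ lie in $W$. Hence $|N_{G[V_p]}(W)|\ge pd-|W|\ge pd/2$. A bad set would therefore need $|W|\ge pd/(2\mu)=d/(2c\lambda)$, which contradicts $|W|\le\varepsilon d$. So we may assume $|W|\ge \varepsilon d$.

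\textbf{Thinning.} Fix a bad $W$ in $G[V_p]$, set $U=N_{G[V_p]}(W)$ and $Z=V_p\setminus(W\cup U)$. Now thin to $V_{p/2}$ and $V_q$. The sets $W_q=W\cap V_q$ and $U\cap V_{p/2}$ are binomial subsamples, so with conditional probability bounded below (by Chernoff, since $|W|\ge\varepsilon d$ is large) we have $|W_q|\approx|W|/4$ and $|U\cap V_{p/2}|\le |U|$. Define the auxiliary property $\mathcal P'$ on the pair $(V_q,V_{p/2})$: there is a set $A\subseteq V_q$ with $|A|\le |V_q|/2$ and $|A|\ge\varepsilon d/8$ whose external neighbourhood inside $V_{p/2}$ has size at most $8\mu|A|$. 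Then $\Pr[\mathcal P']\ge \Omega(1)\cdot\Pr[\text{bad set exists}]$, so it suffices to show $\Pr[\mathcal P']\le 2^{-\Omega(p^2\lambda d)}$.

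\textbf{Sprinkling.} This is the main obstacle. Reveal $V_q$ first. The set $S=V_{p/2}\setminus V_q$ is then an independent sample with probability about $q$ on $V\setminus V_q$. We cannot union bound over all sets $A$, so we compress each candidate $A$. Its closed neighbourhood in $V_q$ is small, so $A$ is essentially a union of components of $G[V_q]$ after deleting $O(\mu|A|)$ boundary vertices. We encode $A$ by that boundary set $B\subseteq V_q$ together with a choice of components. For $B$ we have at most $\binom{|V_q|}{8\mu|A|}$ choices.

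Now use expansion in $G$. The set $A\cup N_{V_q}(A)$ must have at least $\lambda|A|$ external $G$-neighbours, or, if $|A|$ is near $n/2$, we apply expansion to the complement. Near-regularity makes the degree count into these sets work, and shows they are $\Omega(\lambda\cdot qd/d)$-proportionally hit. Roughly $\lambda|A|\,q$ of these neighbours must land in $S$ and avoid being counted. More precisely, the expected size of $N(A)\cap S$ is $\gtrsim q\lambda|A|\gg 8\mu|A|$ once $c$ is small. Chernoff bounds the failure probability by $\exp(-\Omega(q\lambda|A|))$. This must beat the entropy $\exp(O(\mu|A|\log(n)))$ coming from choosing $B$ and the components.

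The entropy is the hard part. The naive count of $B$ costs $\log n$ per vertex. To avoid this, group the candidates by component structure and choose $B$ among neighbours of few "anchor" vertices. Each anchor is chosen out of $n$, but there are only $|A|/(qd)$ of them, since components of $G[V_q]$ restricted to $A$ have degree $\approx qd$. The remaining choices are made within neighbourhoods of size $d$ at cost $\log(1/\mu)$ each. The total entropy is then $O\big(\tfrac{|A|}{qd}\log n+\mu|A|\log(1/(c\lambda p))\big)$. Both terms are at most $\tfrac12 q\lambda|A|$, using $p^2\lambda d=\omega(\log n)$ for the first term and the choice of a small $c$ for the second. Summing over $|A|\ge\varepsilon d/8$ gives $2^{-\Omega(p^2\lambda d)}$. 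I expect the delicate step to be making this anchor-based encoding rigorous, and using near-regularity, with maximum degree $(1+o(1))d$, to keep the sizes of neighbourhoods in $V_q$ concentrated.
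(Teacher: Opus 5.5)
There is a genuine gap, and it begins in the thinning step.

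\textbf{The auxiliary property is not implied by thinning.} Your $\mathcal P'$ asks for $A\subseteq V_q$ whose external neighbourhood \emph{inside $V_{p/2}$} has size at most $8\mu|A|$, and you take $A=W_q=W\cap V_q$. That neighbourhood contains every vertex of $(W\cap V_{p/2})\setminus V_q$ that has a neighbour in $W_q$. Since $|N(W)\cap V_p|\le\mu|W|$ and all degrees into $V_p$ are about $pd$, a typical vertex of $W$ has $\Theta(pd)$ neighbours in $W_q$. So this set has size $\Theta(|W|)\gg\mu|A|$. Controlling $|W_q|$ and $|U\cap V_{p/2}|$ says nothing about it. In fact, for any $A\subseteq V_q$ with small boundary in $V_q$, near-regularity gives $|N_G(A)|\gtrsim|A|/(2q)$. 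Sprinkling alone then typically gives $A$ about $|A|$ new external neighbours, so your $\mathcal P'$ is not what a bad set of $G[V_p]$ leaves behind.

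\textbf{The entropy count does not close, even on its own terms.} The boundary term $\mu|A|\log(1/(cp\lambda))=c\,p\lambda|A|\log(1/(cp\lambda))$ must be at most $\frac12q\lambda|A|=\frac18p\lambda|A|$. That means $c\log(1/(cp\lambda))\le1/8$, which fails for every fixed $c$ once $p\lambda\to0$ (e.g.\ $\lambda=1/\log^2n$). As written, the scheme could only give expansion of order $p\lambda/\log(1/(p\lambda))$.

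\textbf{The anchor count is unjustified.} Nothing controls the minimum degree of $G[A]$, since vertices of $A$ may send most of their $V_q$-edges into $B$. So the bound $|A|/(qd)$ on the number of components or anchors does not follow.

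\textbf{The missing idea is to pay for the boundary in probability rather than in entropy.} In the paper's thinning step, $V_{p/2}$ is required to avoid the whole boundary $S=N_G(U)\cap V_p$, together with a set $S'$ with $|S'|\le16|S|$ obtained by peeling off vertices of degree less than $pd/8$. This costs only $2^{-17|S|}$, i.e.\ one bit per boundary vertex instead of $\log(1/\mu)$ bits. Afterwards, what remains of the bad set is a union of components of $G[V_{p/2}]$. Its $V_{p/4}$-part is a union $\mathcal C$ of components of $G[V_{p/4}]$, each of size more than $pd/64$. The union bound therefore runs over only $n^{O(k/(pd))}=e^{o(p\lambda k)}$ choices. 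The obstruction to destroy is that the components of $G[V_{p/2}]$ meeting $V(\mathcal C)$ have total size at most $9k/17$ and stay inside the closed neighbourhood $\varphi(V(\mathcal C))=V(\mathcal C)\cup N_G(V(\mathcal C))$.

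Sprinkling destroys this obstruction in one of two ways. If $|\varphi(V(\mathcal C))|\ge99n/100$, the sprinkled neighbours make the components too large. Otherwise, near-regularity gives $|\varphi(V(\mathcal C))|\ge|V(\mathcal C)|/(2p)$, and the matching lemma applied to $\varphi(V(\mathcal C))$ yields $\Omega(\lambda k/p)$ disjoint escape edges. Each is fully sprinkled with probability $\Theta(p^2)$, giving failure probability $e^{-\Omega(p\lambda k)}$.

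\textbf{A minor slip in the small-set step.} The bound $|W|\le\lambda d/10$ does not give $pd-|W|\ge pd/2$ when $\lambda>5p$. What is true is that every bad set has size more than $pd/2$.
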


For the final application of the thinning--sprinkling technique, we return to the connectivity of graphs. An old conjecture of Thomassen \cite{thomassen} from 1989 asks whether there exists a function $f$ such that, for every $k$, every $f(k)$-connected graph has a spanning bipartite subgraph which is $k$-connected. More recently, there has been a sequence of papers \cite{delcourt, yuster, Gutin} making progress towards this question. Our result improves on all of these results and implies that $f$ can be taken to be linear as long as $k=\omega(\log n)$.
\begin{theorem}\label{thm: bipartite}
If $G$ is a $k$-connected graph with $k=\omega(\log n)$, then $G$ contains a spanning bipartite subgraph which is $\Omega(k)$-connected.
\end{theorem}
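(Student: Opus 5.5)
The natural approach is to take a uniformly random bipartition $V(G)=A\cup B$, with each vertex placed in $A$ independently with probability $1/2$, and to show that with positive probability the bipartite subgraph $H=G[A,B]$, consisting of all edges of $G$ between $A$ and $B$, is $\Omega(k)$-connected. The argument would reduce vertex-connectivity of $H$ to statements about random induced subgraphs, so that \cref{thm: subsampling k-connected} can be applied with $p$ a constant. Since $A$ and $B$ are each distributed as $V_{1/2}$, \cref{thm: subsampling k-connected} with $p=1/2$ shows that $G[A]$ and $G[B]$ are both $ck$-connected with probability $1-2^{-\Omega(k)}$. Each vertex also has $\Omega(k)$ neighbours on the opposite side with probability $1-e^{-\Omega(k)}$, because $\delta(G)\ge k$. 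Since $k=\omega(\log n)$, a union bound over the $n$ vertices shows that, with high probability, every vertex has degree at least $k/4$ in $H$.

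The difficulty is that connectivity of $G[A]$ does not directly imply connectivity of $H$, since $H$ contains no edges inside $A$. I would therefore prove connectivity of $H$ with a thinning--sprinkling argument in the style of the proof of \cref{thm: subsampling k-connected}. Suppose $S$ is a separator of $H$ with $|S|\le \varepsilon k$. Then $H-S$ splits into sides $X$ and $Y$, and every $G$-edge between $X$ and $Y$ joins two vertices on the same side of the partition. Pass to nested samples: let $A'\subseteq A$ and $B'\subseteq B$ each keep vertices independently with probability $1/2$. The thinning property $\mathcal P'$ states that $A'\cup B'$ (minus a small set) admits a split in which all cross $G$-edges are monochromatic with respect to the colouring $(A',B')$. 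Sprinkling then adds the vertices in $(A\setminus A')\cup(B\setminus B')$. Because $G$ is $k$-connected, for any fixed candidate split there are $k$ internally disjoint paths between the two sides. Each path has a constant chance of receiving a correctly alternating sprinkled vertex that links the two sides in $H$, and the paths are independent because they are disjoint. So a fixed candidate survives with probability only $2^{-\Omega(k)}$.

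The main obstacle is controlling the number of candidate configurations, so that this $2^{-\Omega(k)}$ bound beats a union bound. I would handle this exactly as in the proof of \cref{thm: subsampling k-connected}, encoding a bad split by the small separator together with the component structure of the sparse sample $G[A'\cup B']$. That sample is itself highly connected by \cref{thm: subsampling k-connected}, which should give at most $n^{O(\varepsilon k)}\cdot 2^{o(k)}$ candidates. Choosing $\varepsilon$ small then makes the union bound succeed, using $k=\omega(\log n)$. With positive probability, $H$ is therefore a spanning bipartite subgraph of $G$ that is $\Omega(k)$-connected.
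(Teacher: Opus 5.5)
Your argument aims to show, via union bounds, that for a uniformly random bipartition $(A,B)$ the graph $H=G[A,B]$ is $\Omega(k)$-connected with probability $1-o(1)$. That statement is false. Take any $k$-connected bipartite $G$ with parts $P,Q$, for example $K_{n/2,n/2}$. Every edge of $G[A,B]$ lies inside $(P\cap A)\cup(Q\cap B)$ or inside $(P\cap B)\cup(Q\cap A)$. So $G[A,B]$ is disconnected unless $A\in\{P,Q\}$, that is, with probability $1-2^{1-n}$.

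The step that fails is the claim that each connecting path has a constant chance of being completed by correctly coloured sprinkled vertices. A $G$-path of length $\ell$ can become a path of $H$ only if its endpoints have equal colours when $\ell$ is even and different colours when $\ell$ is odd. The endpoint colours are already fixed by the thinned sample, and in the bipartite example every path between the two sides has the wrong parity. So the probability is $0$, not a constant.

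There are two further problems. Menger's theorem gives no bound on path lengths, so the per-path probability would not be constant even without the parity issue; the proof of \cref{thm: subsampling k-connected} needs its iterated sprinkling rounds precisely to produce paths of length at most $3$. Your bound of $n^{O(\varepsilon k)}2^{o(k)}$ candidate splits is also unsupported, since high connectivity of $G[A'\cup B']$ says nothing about the components of the bipartite graph $G[A',B']$.

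The missing idea is that the random bipartition itself need not work: you must allow two components and then repair the bipartition. The paper argues by contradiction, assuming no spanning bipartite $\varepsilon k/2$-connected subgraph exists. Take a small cut $S$ of $G[A,B]$ with largest component $C$. The configuration ``exactly two components, $C$ is $\varepsilon k/2$-connected, every vertex of $G$ has at least $\varepsilon k$ neighbours in $C$'' is then impossible. Indeed, keeping the colouring on $C$ and placing every other vertex on the side where it has fewer $C$-neighbours would produce such a subgraph.

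Hence, after thinning by an independent $V_{1/2}$, with probability at least $2^{-2\varepsilon k}$ the graph $H'=G[A\cap V_{1/2},B\cap V_{1/2}]$ either has at least three components or has a component in which some vertex of $G$ has fewer than $\varepsilon k$ neighbours. Sprinkling rules this out. Exploring from $v$ and admitting a new vertex exactly when it lies in $V_{1/2}$ with the opposite colour to its predecessor produces a $1/4$-random set $W$ whose component of $v$ lies inside the component of $v$ in $H'$. Then \cref{thm: subsampling k-connected} with $p=1/4$ makes every component of $H'$ larger than $n/5$, so there are at most two, and gives every vertex of $G$ at least $\varepsilon k$ neighbours in each. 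In your bipartite example, it is exactly this recolouring step that turns the disconnected $G[A,B]$ back into $G[P,Q]=G$.
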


\subsection{Almost Hamilton cycles}
Using the robustness results obtained through the thinning--sprinkling technique, we further develop a general framework for showing that certain graphs contain almost spanning cycles. Roughly speaking, we show that if a graph is robustly tough and allows a suitable way of finding a spanning linear forest then it has an almost spanning cycle. We apply this principle to tough graphs, vertex-transitive graphs, and expanders. For the first two cases, the resulting almost-spanning cycle results are new. For expanders, the corresponding result was already known, but our argument is significantly simpler and substantially improves the required degree threshold.

The celebrated question concerning the relationship between toughness and Hamiltonicity dates back to a paper of Chv\'atal from 1973~\cite{chvatal}, where he asked whether there exists a constant $t_0$ such that every $t_0$-tough graph is Hamiltonian. Progress on this problem has been limited: for more than 25 years (see \cite{toughlowerbound}), the best general bounds have remained $9/4\leq t_0\leq (1+o(1))\sqrt{n/2}$. Recently, Segal and Verstraete~\cite{segal} showed that every $15$-tough graph contains a cycle of polynomial length with a small exponent. We show that polylogarithmic toughness already suffices for an almost spanning cycle.
\begin{theorem}\label{thm: tough cycle}
    Let $G$ be a $t$-tough graph with $t= \omega(\log^3 n)$. Then, $G$ contains a cycle of length $(1-o(1))n$.
\end{theorem}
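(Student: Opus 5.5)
We plan to prove the theorem via the framework sketched above: find a spanning linear forest with few paths, then link the paths into one long cycle through a reserved random vertex set that is still tough by \cref{thm: subsample tough}.

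\textbf{Step 1: reservoir.} Fix $p=\Theta(\log^{-1} n)\cdot o(1)$, chosen so that $p^2t=\omega(\log n)$. This is possible because $t=\omega(\log^3 n)$. Partition $V(G)$ at random into $m=\Theta(1/p)$ disjoint sets $R_1,\dots,R_m$, each of which is distributed as $V_p$, and let $W$ be the remaining vertices. By \cref{thm: subsample tough} and a union bound, each $G[R_i]$ is $\Omega(p^2t)$-tough, and in particular $\omega(\log n)$-connected, with high probability.

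\textbf{Step 2: linear forest.} Next we find a linear forest $F$ covering all but $o(n)$ vertices with few paths. The natural tool is toughness: a $t$-tough graph has a $\lceil t\rceil$-factor-like structure. Enomoto et al.\ show that $k$-tough graphs have $k$-factors, and a $2$-factor already follows from $2$-toughness. A $2$-factor of $G[W]$ (or of $G$) gives a cycle cover. Deleting one edge from each cycle gives a spanning linear forest, but the number of cycles could be linear. So we instead take a $2$-factor in $G[R_i\cup\ldots]$, or better, argue as follows. Choose an auxiliary random subset and use the toughness of many random subsamples to ensure that the paths, of which there are at most $n/\ell$, each have length $\ell=\omega(1/p)$. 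Short cycles are discarded, and their vertices are few after merging short cycles using high connectivity of the reservoir. Concretely, we repeatedly merge two cycles of the $2$-factor whenever an edge between them allows a rotation. A maximal such cover has few short cycles, since otherwise their union $S$ together with its boundary contradicts toughness (removing the neighbourhood of many short cycles creates too many components).

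\textbf{Step 3: linking.} Order the $N\le n/\ell$ paths $P_1,\dots,P_N$ and greedily connect the end of $P_j$ to the start of $P_{j+1}$ by a short path through an unused reservoir $R_i$. Use Menger's theorem in $G[R_i\cup\{\text{endpoints}\}]$ together with the diameter bound implied by toughness/expansion: an $\omega(\log n)$-connected tough graph has connectors of length $O(\log n)$ avoiding the few used vertices. We need endpoints to have many neighbours in $R_i$, which holds by Chernoff since $\deg\ge 2t$. Total reservoir usage is $N\cdot O(\log n)=o(n)$ provided $\ell=\omega(\log n)$. Finally we close the cycle. The uncovered vertices amount to $o(n)$, from the reservoir leftovers plus the discarded short cycles; the reservoir has size $mp n$, which we keep at $o(n)$ by taking $m$ small but at least a constant.

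\textbf{Main obstacle.} We expect Step 2 to be hardest: guaranteeing that the linear forest has only $o(n/\log n)$ components using toughness alone. Our first attempt is the merging/extremality argument against toughness. Failing that, we would find paths inside each $G[R_i]$ recursively, using its $\Omega(p^2t)$-toughness, and absorb $W$ into them.
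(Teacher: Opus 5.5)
There is a genuine gap, in both Step 2 and Step 3.

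\textbf{Step 3.} This step relies on a false claim: tough graphs need not have small diameter. The $s$-th power of the cycle $C_n^s$ is $s$-tough, since splitting it into $c\geq 2$ components requires $c$ gaps of at least $s$ consecutive deleted vertices. Yet its diameter is about $n/(2s)$, and the same holds for its random induced subgraphs. With $s=\log^4 n$ this graph satisfies your hypotheses but has no connectors of length $O(\log n)$ between far-apart endpoints, so your bound $N\cdot O(\log n)$ on reservoir usage fails.

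Even ignoring length, joining \emph{prescribed} pairs (end of $P_j$ to start of $P_{j+1}$) by vertex-disjoint paths is a linkage problem. You have far more connectors than reservoirs. The connectivity $\Omega(p^2t)$ of $G[R_i]$ is only polylogarithmic, so it no longer helps once earlier connectors have used more than that many vertices of $R_i$. Nor does ``each endpoint has many neighbours in $R_i$'' give an injective assignment of endpoints to reservoir vertices, since endpoints may share neighbours.

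\textbf{Step 2.} You correctly identify this as the crux, but it is not established. Let $S$ be the union of the short cycles of a maximal cover. Toughness only gives $|N(S)|\geq t\cdot c(G-N(S))$. But $|N(S)|$ can be as large as $\Delta|S|$, while $G-N(S)$ may have few components (e.g.\ if the short cycles are adjacent to one another). So there is no contradiction, and linearly many short cycles are not ruled out. Separately, $m=\Theta(1/p)$ reservoirs of density $p$ would use up essentially all of $V(G)$, contradicting your later requirement that the reservoir be small.

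\textbf{The missing idea.} You need neither a forest with few paths nor links between prescribed pairs. The paper takes $F$ to be a spanning linear forest of $G[V_0]$ with the minimum number of components; it may have linearly many paths.

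By minimality, the endpoint set $X$ spans only edges joining the two ends of a single path. Hence every $U\subseteq X$ contains an independent set of size at least $|U|/2$. Independent sets in a $t$-tough graph satisfy $|N(I)|\geq t|I|$, so Chernoff plus a union bound over $I$ gives Hall's condition. This yields a matching of $X$ into each of $k=\Theta(\log n)$ reservoirs $V_i$ of density $\varepsilon/k$. Each $G[V_i]$ is $1$-tough by Theorem~\ref{thm: subsample tough}, because $p^2t=\omega(\log n)$.

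In round $i$, match one endpoint of every current path into $V_i$, and let $A$ be the set of matched vertices. Apply Gallai's $A$-path theorem (Proposition~\ref{thm: A-path}): a $1$-tough graph contains $|A|/4$ vertex-disjoint $A$-paths. These connect \emph{arbitrary} pairs of matched endpoints, which is all that is needed, and so merge a constant fraction of the paths in each round. After $O(\log n)$ rounds a single path spans $V_0$, and the last reservoir closes it into a cycle of length at least $(1-2\varepsilon)n$. This uses only toughness, with no diameter, expansion or linkage properties.
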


Another central open problem concerning Hamiltonicity is the Lov\'asz conjecture for connected vertex-transitive graphs. The strongest natural statement, that every connected vertex-transitive graph has a Hamilton cycle, is false. Lov\'asz~\cite{lovasz} asked whether there exists a connected vertex-transitive graph without a Hamilton path, while a stronger conjecture, commonly attributed to Thomassen, asserts that all but finitely many connected vertex-transitive graphs have a Hamilton cycle; see~\cite{babai}. Two main directions have emerged in the study of these problems. The first asks how long a cycle one can guarantee in every connected vertex-transitive graph. This goes back to Babai~\cite{babai}, who proved a lower bound of $\Omega(\sqrt n)$; the current best bound is $n^{2/3-o(1)}$, due to Buci\'c, Christoph, Pokrovskiy and Steiner~\cite{bucic-christoph-pokrovskiy-steiner}. The second asks under what additional assumptions Hamiltonicity can be guaranteed. Christofides, Hladk\'y and M\'ath\'e~\cite{christofides-hladky-mathe} proved this for connected vertex-transitive graphs of linear degree, and more recently Bedert, Dragani\'c, M\"uyesser and Pavez-Sign\'e~\cite{bedert-draganic-muyesser-pavez-signe} showed that there exists $\varepsilon>0$ such that every sufficiently large connected Cayley graph of degree at least $n^{1-\varepsilon}$ is Hamiltonian. Our result connects these two directions and shows that already for connected vertex-transitive graphs of polylogarithmic degree, there is a cycle covering all but $o(n)$ vertices.
\begin{theorem}\label{thm: vertex-transitive cycle}
    Let $G$ be a connected $d$-regular vertex-transitive graph with $d=\omega(\log^3 n\, \log\log n)$. Then, $G$ contains a cycle of length $(1-o(1))n$.
\end{theorem}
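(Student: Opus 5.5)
The plan is to combine the robustness result for vertex-transitive graphs (\cref{thm: vertex-transitive subsampling}) with a cycle-building framework: partition $V(G)$ into many random parts, find an almost-spanning linear forest with few paths, and then use robust toughness of a reserved random set to join the paths into a single cycle.

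\textbf{Step 1: random partition and toughness of the reservoir.} Fix $p=1/\log n$ (or a slightly smaller polylogarithmic quantity). Split $V(G)$ at random into a reservoir $R=V_p$ and the remainder $W$. The hypothesis $d=\omega(\log^3 n\log\log n)$ gives $p^2d=\omega(\log n)$, so \cref{thm: vertex-transitive subsampling} applies. It shows that, with high probability, $G[R]$ is $(1-\varepsilon)$-tough. Moreover, every vertex has about $pd$ neighbours in $R$ and about $(1-p)d$ neighbours in $W$; this follows from a Chernoff bound and a union bound over vertices.

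\textbf{Step 2: an almost-spanning linear forest in $G[W]$.} Vertex-transitivity makes $G$ regular, and $G[W]$ is nearly regular with degree about $(1-p)d$. A regular graph has a near-perfect fractional matching. Iterating, or taking a random $2$-factor-like structure via a nibble or a path-cover argument, I want to cover all but $o(n)$ vertices of $W$ by at most $n/D$ vertex-disjoint paths, where $D$ is a large polylogarithmic quantity. The simplest concrete route is the following. A $d'$-regular graph contains a path cover with $O(n/d')$ paths after discarding $o(n)$ vertices. This can be obtained from a random greedy (DFS) argument or from a matching-based merging argument, using only minimum and maximum degree conditions. So the number of paths $m$ is about $n/d=o(n/\log^3 n)$.

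\textbf{Step 3: connecting the paths through the reservoir.} Order the paths $P_1,\dots,P_m$. I will link the end of $P_i$ to the start of $P_{i+1}$ by short, vertex-disjoint paths inside $R$. The key tool is a toughness-based linkage: $G[R]$ is $\Omega(1)$-tough, so any set $S$ with $|S|$ small splits $G[R]$ into few components. Combined with connectivity, toughness gives short connections. Indeed, a $1$-tough graph with degree $\Theta(pd)$ has diameter $O(\log n)$-type expansion, because $\Omega(1)$-toughness prevents small separators from isolating many pieces. Vertex-transitive robustness gives uniform expansion, so a BFS from a set grows.

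The connections are built greedily, using disjoint subsets of $R$. Each link costs $O(\log n)$ vertices, so the total cost is $m\cdot O(\log n)=o(n/\log^2 n)$. This is far less than $|R|\varepsilon$, so the removed vertices are negligible. To keep the toughness of the remainder intact, I will split $R$ into polylogarithmically many further random pieces $R_1,\dots,R_s$, each still satisfying $p_j^2d=\omega(\log n)$; this is where the extra $\log^2 n\log\log n$ factor in $d$ gets spent. The links are then routed in a fresh piece, using many vertex-disjoint routes (Menger) so that earlier usage never blocks a connection.

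Finally, vertices of $R$ are not covered, but $|R|=pn=o(n)$. Closing the last path back to $P_1$ gives a cycle of length $(1-o(1))n$.

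\textbf{Main obstacle.} The hardest step is Step 3: converting $(1-\varepsilon)$-toughness of random pieces into many disjoint \emph{short} paths between prescribed endpoints, since toughness alone does not bound path lengths. My first attempt would be the following. Use that each $G[R_j]$ is $\Omega(p_j d)$-connected; this follows from its degree and from $1$-toughness with regularity, via Mader-type edge connectivity. Then route the endpoints via Menger, and bound lengths by the pigeonhole principle: among $\Omega(p_jd)$ disjoint paths in a set of size $p_jn$, some path has length at most $n/d$. With $m\approx n/d$ links this would cost $n^2/d^2$ vertices in total, which is too much. So I would refine the argument by exploiting expansion from vertex-transitivity, namely \cref{thm: regular expander subsampling} applied to $G$, which is a $\Omega(1/\text{diam})$-type expander, to get polylogarithmic path lengths.
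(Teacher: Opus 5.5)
Your Step 3 is where the argument breaks, and the failure is structural, not a matter of sharper estimates. Toughness of $G[R]$ gives neither short paths nor linkages of prescribed pairs, and vertex-transitivity does not rescue this. Consider the Cayley graph of $\mathbb{Z}_n$ with generators $\pm1,\ldots,\pm d/2$. It is connected, $d$-regular and vertex-transitive, but has diameter about $n/d$ and vertex expansion only of order $d/n$. This has three consequences:
\begin{itemize}
\item \cref{thm: regular expander subsampling}, which needs $p^2\lambda d=\omega(\log n)$, says nothing for polylogarithmic $d$.
\item Your claim that a tough graph of degree $\Theta(pd)$ has ``$O(\log n)$ diameter-type expansion'' is false: random subsets of this graph are tough by \cref{thm: vertex-transitive subsampling}, yet still have diameter $\Omega(n/d)$.
\item A link between two far-apart prescribed endpoints must have length $\Omega(n/d)$. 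This is exactly the $n^2/d^2$ budget failure you found yourself.
\end{itemize}
Menger's theorem also only gives disjoint paths between two sets, not between prescribed pairs, so ``many routes so that earlier usage never blocks a connection'' is unsupported. Your bookkeeping is off as well. Near the threshold $d\approx\log^3 n\log\log n$, a reservoir of density $1/\log n$ cannot be split into polylogarithmically many pieces each satisfying $p_j^2d=\omega(\log n)$. Finally, the Step~2 claim of an $O(n/d)$-path cover ``via DFS'' is asserted without proof, and it is much stronger than anything the argument actually needs.

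The missing idea is to give up both on short links and on prescribing which paths get joined.
\begin{itemize}
\item The paper uses $k=\Theta(\log n)$ disjoint random reservoirs $V_1,\ldots,V_k$, each of density $\varepsilon/k$. The hypothesis $d=\omega(\log^3 n)$ is exactly what makes $p^2d=\omega(\log n)$ here, so each $G[V_i]$ is $1/2$-tough by \cref{thm: vertex-transitive subsampling}.
\item Gallai's $A$-path theorem turns $t$-toughness into $\min\{1,t\}|A|/4$ vertex-disjoint $A$-paths (\cref{lem: A-connecting}).
\item In round $i$, one endpoint of each current path is matched into $V_i$ via Hall's condition. Disjoint $A$-paths in $G[V_i]$ then join a constant fraction of the paths in pairs. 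Their lengths and which pairs get joined are irrelevant, since the connecting paths are disjoint inside a fresh reservoir.
\item After $O(\log n)$ rounds a single path remains, and it is closed through $V_k$ (\cref{lem: long cycle}).
\end{itemize}
The only requirement on the initial forest is that its endpoint set satisfies Hall's condition into every $V_i$. The paper secures this by splitting $V_0$ into $m=\Theta(\log n)$ degree-balanced layers with the Local Lemma, which is where the $\log\log n$ comes from. It then applies Menger across the layers (\cref{lem: paths through layers}). As a result, all endpoints lie in $W_1\cup W_m$ and every vertex has only $O(d/m)$ neighbours among them.
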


As a final illustration of the subsampling technique and the generality of our approach to long cycles, we apply it to nearly regular sublinear expanders. Letzter, Methuku and Sudakov~\cite{letzter} previously showed that such graphs contain almost spanning cycles under substantially stronger quantitative assumptions. For the standard sublinear-expansion scale $\lambda=1/\log^2 n$, their result requires degree roughly $(\log n)^{70}$. Our approach gives a significantly simpler proof with better quantitative dependence.

\begin{theorem}\label{thm: expander cycle}
Let $G$ be a $\lambda$-expander with minimum degree $d$ and maximum degree $(1+o(1))d$ such that $\lambda d=\omega(\log^3 n)$ and $d=\omega(\log^3 n\log\log n)$. Then, $G$ contains a cycle of length $(1-o(1))n$.
\end{theorem}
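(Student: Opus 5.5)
We follow the general framework sketched in the introduction: find a spanning linear forest with few paths, then use a robustly connected random subgraph to link the paths into one long cycle. Throughout we use \cref{thm: regular expander subsampling} as the robustness input.

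\textbf{Step 1: splitting the vertex set.} Fix $p = 1/\log\log n$, or a slightly smaller function tending to $0$. Take a random set $V_p$, and put $R = V_p$ and $W = V(G)\setminus R$. By \cref{thm: regular expander subsampling}, the graph $G[R]$ is an $\Omega(p\lambda)$-expander with probability $1-o(1)$, since $p^2\lambda d=\omega(\log n)$ holds given $\lambda d=\omega(\log^3 n)$. In fact we will want a stronger statement: the expansion of $G[R\setminus X]$ should survive the deletion of any small set $X$, of size at most $o(p\lambda|R|)$ or so. This follows by a standard argument, because a $\lambda'$-expander minus $X$ still expands sets that are not tiny. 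Chernoff bounds also show that every vertex has $(1\pm o(1))(1-p)d$ neighbours in $W$ and $(1\pm o(1))pd$ neighbours in $R$.

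\textbf{Step 2: a linear forest on $W$.} The graph $G[W]$ is nearly regular with degrees $(1\pm o(1))(1-p)d$. By a standard nibble / Pippenger--Spencer type argument, or simply by a random greedy 2-factor-like matching argument, $G[W]$ has a linear forest covering all but $o(n)$ vertices of $W$ with at most $m = O(n/(d\,p))$ or even $o(n/\log^3 n)$ paths. The simplest concrete route is the following. Take a spanning subgraph with all degrees close to $d'=(1-p)d$. Use the fact that a nearly $d'$-regular graph has a 2-matching, and hence a path cover after breaking cycles, with $O(n/d')$ components; this can be obtained by Vizing/Petersen-type decompositions of near-regular graphs into near-2-factors. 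We then break each cycle by deleting one edge, which keeps the number of paths at $O(n/d')$.

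\textbf{Step 3: linking through $R$.} We link the paths sequentially through $R$. The key tool is that in an $\alpha$-expander on $N$ vertices, with $\alpha=\Omega(p\lambda)$, any two sets of size at least $\alpha^{-1}\log N$ (roughly) are joined by a path of length $O(\alpha^{-1}\log N)$ avoiding a prescribed small set. The expansion-after-deletion statement from Step 1 supplies this. The endpoints of each path have $\Omega(pd)$ neighbours in $R$, and $pd\ge \alpha^{-1}\log n$ holds because $p^2\lambda d=\omega(\log n)$. Each connection therefore uses $O(\log n/(p\lambda))$ vertices of $R$. Linking all $m=O(n/d)$ paths consumes
\[
O\!\left(\frac{n\log n}{dp\lambda}\right)=o(pn)
\]
vertices of $R$, since $\lambda d p^2=\omega(\log n)$. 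The used set $X$ therefore stays small enough for the robust expansion to keep applying. Closing the final path back to the first one gives a cycle containing all but $o(n)$ vertices of $W$. Since $|W|\ge(1-p-o(1))n$, the cycle has length $(1-o(1))n$.

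\textbf{Main obstacle.} I expect two delicate points. The first is Step 2, obtaining few paths, for which the $\log\log n$ factor in $d$ is presumably needed. The second is ensuring that expansion of $G[R\setminus X]$ is robust to the removal of the growing set $X$. The latter may require reserving $R$ in $\Theta(\log n)$ independent random pieces, each an expander by \cref{thm: regular expander subsampling}. Each connection would then be done inside a fresh piece, with a union bound over the pieces, rather than proving a deletion-robust expansion statement directly.
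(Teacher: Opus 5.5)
Your approach has a genuine gap in Step 3, and Step 2 is not justified as written.

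\textbf{Step 3.} The ball-growing connection argument in an $\alpha$-expander avoids a set $X$ only when the sets being connected are large compared to $|X|/\alpha$. A current ball $U$ is only guaranteed $\alpha|U|-|X|$ new vertices outside $X$. Your starting sets are the $R$-neighbourhoods of path endpoints, of size $\Theta(pd)$, and $\alpha=\Theta(p\lambda)$. So you can afford to avoid only $O(p^2\lambda d)$ vertices, which is polylogarithmic. The used set, however, grows to $\Theta(n\log n/(dp\lambda))$, which is $n/\mathrm{polylog}(n)$ when $d$ is polylogarithmic. Your remark that expansion survives for sets that are not tiny only covers sets of size $\gtrsim |X|/(p\lambda)$, far above $pd$. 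Nothing prevents a later endpoint's $R$-neighbourhood from being enclosed by, or contained in, previously used vertices, and then that endpoint cannot be linked. Even your Step 1 requirement $|X|=o(p\lambda|R|)$ would need $\log n=o(p^3\lambda^2 d)$, which does not follow from the hypotheses when $\lambda$ is small. Reserving $\Theta(\log n)$ fresh pieces does not help as long as you link one pair at a time. There are $\Theta(n/d)\gg\log n$ connections, so each piece must still survive deletions vastly larger than its expansion tolerates.

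\textbf{Step 2.} A 2-factor, or a union of two matchings from a Vizing colouring, can consist of $\Theta(n)$ short cycles. Breaking cycles then gives $\Theta(n)$ paths, not $O(n/d')$. Your count $mL=o(pn)$ needs $m=o(p^2\lambda n/\log n)$. For a graph that is only nearly regular, this would require a genuinely stronger path-cover result, which you have not supplied.

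\textbf{The missing idea.} The paper spends each fresh random piece on a single round that merges a constant fraction of \emph{all} current paths at once.
\begin{itemize}
\item It takes $k=\Theta(\log n)$ disjoint random sets $V_1,\dots,V_k$ of density $\Theta(1/\log n)$. Each $G[V_i]$ is $1/2$-tough by Corollary~\ref{cor: expander to tough subsampling}; this is exactly where $\lambda d=\omega(\log^3 n)$ is used.
\item It builds a linear forest in $V_0$ via the layered Menger argument of Lemma~\ref{lem: paths through layers}. That forest may have $\Theta(n/\log n)$ paths, but all endpoints lie in the outer layers, so Hall's theorem matches them into every $V_i$.
\item In round $i$, Gallai's theorem (Proposition~\ref{thm: A-path}) applied in the tough graph $G[V_i]$ gives vertex-disjoint $A$-paths that merge a constant fraction of the current paths simultaneously (Lemma~\ref{lem: long cycle}).
\end{itemize}
Since each $V_i$ is used in only one round, no robustness under deletion is ever needed, and the number of paths in the initial forest is irrelevant. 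Also, the $\log\log n$ in the degree condition comes from the Local Lemma layering, not from needing few paths.
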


\noindent
For $\lambda=1/\log^2 n$, our theorem gives an almost spanning cycle already when $d=\omega(\log^5 n)$. On the other hand, Chen, Liu, Wei and Yang~\cite{chen-liu-wei-yang} recently constructed $d$-regular sublinear expanders with $d=(1/2+o(1))\log^2 n$ whose longest cycle covers only an arbitrarily small fraction of the vertices, leaving a gap between exponents $2$ and $5$.

\section{Framework and notation}\label{sec: notation}
Throughout the paper, $n$ denotes the number of vertices of the graph $G$ and we assume $n$ to be large. Asymptotic notation like $\omega(\cdot)$ is always with respect to $n$. Given a finite set $V$ and $p\in[0,1]$, we say that $U\subseteq V$ is
\emph{$p$-random} if every element of $V$ belongs to $U$ independently with
probability $p$. For a graph $G$, we write $V_p$ for a $p$-random subset of $V(G)$ and
$G_p:=G[V_p]$.

We begin with a brief description of the main idea underlying the thinning--sprinkling technique.
It is based on viewing the same nested random
sets
\[
    V_{p/4}\subseteq V_{p/2}\subseteq V_p
\]
in two complementary ways. Suppose that our goal is to show that $G[V_p]$
satisfies some property $\mathcal P$ (e.g., connectivity) with very high probability. From the
\emph{thinning} viewpoint, conditioned on $V_p$, the set $V_{p/2}$ is a
$1/2$-random subset of $V_p$, and, conditioned on $V_{p/2}$, the set
$V_{p/4}$ is a $1/2$-random subset of $V_{p/2}$. Typically, if $\mathcal P$
fails with some probability $q$, thinning allows us to deduce that
a simpler obstruction $\mathcal E$, depending only on $V_{p/4}$ and
$V_{p/2}$, occurs with probability at least $q'=q'(q)$.

We then estimate the probability of $\mathcal E$ from the complementary
\emph{sprinkling} viewpoint. Take $p'$ to be the solution to
\[
    (1-p/4)(1-p')=1-p/2.
\]
We may couple a $p'$-random set $V_{p'}$ with $V_{p/4}$ so that the two sets
are independent and
\[
    V_{p/2}=V_{p/4}\cup V_{p'}.
\]
Thus, after revealing $V_{p/4}$, we can regard $V_{p'}$ as fresh independent
randomness which is sprinkled onto it. The structure of the ambient graph
then typically gives many opportunities for this sprinkling to destroy the
obstruction $\mathcal E$, allowing us to show that
$\mathbb P(\mathcal E)<q'$. This contradicts the lower bound obtained by
thinning and proves that $\mathcal P$ fails with  probability less than $q$.

In our applications, the sprinkling set $V_{p'}$ will often be exposed in
many independent stages. We introduce some notation for this setup. Let
$U_0,\ldots,U_\ell$ be independent random subsets of $V(G)$, where $U_0$
should be thought of as the initially exposed set $V_{p/4}$ and
$U_1,\ldots,U_\ell$ as successive sprinklings. Set
\[
    W_i=\bigcup_{j=0}^i U_j
    \qquad\text{and}\qquad
    G_i=G[W_i].
\]
For $v\in V(G)$, let $C_i(v)$ denote the component of $v$ in $G_i$, with
$C_i(v)=\emptyset$ if $v\notin W_i$. Additionally, we say that a connected component of $G_i$ is \emph{seeded} if it meets (i.e. has nonempty intersection with) $U_0$.

An important feature of this exploration is that $C_i(v)$ can be determined
without exposing all of $U_0,\ldots,U_i$: it is enough to reveal the
interaction of $U_0,\ldots,U_i$ with
\[
    \{v\}\cup C_i(v)\cup N_G(C_i(v)).
\]
For vertices $v_1,\ldots,v_k$, we write
$\mathcal F_i(v_1,\ldots,v_k)$ for the information revealed by carrying out
these explorations simultaneously for $v_1,\ldots,v_k$ up to stage $i$.
This allows us to expose the random sets adaptively while leaving the
unexplored vertices independent for later sprinkling steps.

We will repeatedly use the following elementary consequence of this setup.
Suppose that $\mathcal A_1,\ldots,\mathcal A_\ell$ are events such that
$\mathcal A_i$ is $\mathcal F_i(v_1,\ldots,v_k)$-measurable and
\[
    \mathbb P\bigl(\mathcal A_i
        \mid \mathcal F_{i-1}(v_1,\ldots,v_k)\bigr)\geq \rho
\]
for every $1\leq i\leq\ell$. Then
\[
    \sum_{i=1}^{\ell}\mathbf 1_{\mathcal A_i}
\]
stochastically dominates $\operatorname{Bin}(\ell,\rho)$. In particular,
standard binomial lower tail bounds remain available even though the successive
events $\mathcal A_i$ need not be independent.

A recurring goal in the sprinkling arguments will be to show that, at many
stages, one of the components containing vertices of $U_0$ must grow. A
minor difficulty is that a component may grow by merging a large component with a small one, so an arbitrary starting vertex need not witness substantial
multiplicative growth during the merge. The following deterministic observation lets us
choose the starting vertex so that every genuine growth event at least
doubles the number of vertices of $U_0$ in its component. Consequently,
such growth can occur at most $\log_2 n$ times. This will allow us to turn
the probabilistic occurrence of many successful sprinkling stages into a
contradiction.

\begin{lemma}\label{lem: minimal vertex}
    Let $G$ be a graph and let $U_0,\ldots,U_\ell\subseteq V(G)$. Let $C$ be
    a seeded component of $G$. Then, there exists
    $v\in C\cap U_0$ such that, for every $0\leq i\leq\ell-1$, either
    \[
    C_i(v)\cap U_0=C_{i+1}(v)\cap U_0
    \qquad\text{or}\qquad
    2|C_i(v)\cap U_0|
        \leq |C_{i+1}(v)\cap U_0|.\]
\end{lemma}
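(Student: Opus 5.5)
The idea is to choose $v$ by working backwards from the final stage: at each stage we pass to a component with the fewest $U_0$-vertices. Here ``seeded component of $G$'' should be read as a seeded component of $G_\ell=G[W_\ell]$, the graph obtained after all sprinklings.

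We construct a nested sequence of seeded components $D_\ell\supseteq D_{\ell-1}\supseteq\cdots\supseteq D_0$, where $D_i$ is a component of $G_i$. Set $D_\ell:=C$. Given a seeded component $D_{i+1}$ of $G_{i+1}$, we note two facts.
\begin{itemize}
\item Since $W_i\subseteq W_{i+1}$, the graph $G_i$ is an induced subgraph of $G_{i+1}$. So every component of $G_i$ lies entirely inside a single component of $G_{i+1}$.
\item Since $U_0\subseteq W_i$, every vertex of $D_{i+1}\cap U_0$ belongs to some seeded component of $G_i$ contained in $D_{i+1}$.
\end{itemize}
Hence $D_{i+1}\cap U_0$ is the disjoint union of the sets $K\cap U_0$, where $K$ ranges over the seeded components of $G_i$ contained in $D_{i+1}$. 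There is at least one such $K$ because $D_{i+1}$ is seeded. Let $D_i$ be such a $K$ minimising $|K\cap U_0|$. Iterating down to $i=0$ gives $D_0$, a seeded component of $G_0=G[U_0]$. We then take $v$ to be any vertex of $D_0\cap U_0$, which is nonempty.

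Because the $D_i$ are nested and each $D_i$ is a component of $G_i$, we get $v\in D_i$ for all $i$, and therefore $C_i(v)=D_i$. In particular $v\in C\cap U_0$.

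Now fix $0\le i\le \ell-1$ and consider two cases.
\begin{itemize}
\item If $D_{i+1}$ contains exactly one seeded component of $G_i$, that component is $D_i$. The decomposition above then gives $C_{i+1}(v)\cap U_0=C_i(v)\cap U_0$.
\item If it contains at least two, they are disjoint and $D_i$ has the fewest $U_0$-vertices among them. The other seeded components together contribute at least $|D_i\cap U_0|$, so $|C_{i+1}(v)\cap U_0|\ge 2|C_i(v)\cap U_0|$.
\end{itemize}
This gives the stated dichotomy for every $i$.

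The argument is entirely deterministic, so there is no genuine obstacle. The one point that needs care is the containment of components of $G_i$ in components of $G_{i+1}$ together with $U_0\subseteq W_i$. This is exactly what makes $D_{i+1}\cap U_0$ partition along the seeded components of $G_i$, and it is what lets the greedy backward choice work simultaneously for every $i$.
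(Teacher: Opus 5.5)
Your proof is correct and is essentially the paper's argument: the paper also works backwards from stage $\ell$ by reverse induction, keeping the current witness when $C_{j-1}(v)\cap U_0=C_j(v)\cap U_0$ and otherwise switching to a vertex of a seeded component of $G_{j-1}$ inside $C_j(v)$ that has at most half of its $U_0$-vertices. Your nested chain $D_\ell\supseteq\cdots\supseteq D_0$ with a minimising choice at every stage is the same backward greedy selection, and your reading of $C$ as a seeded component of $G_\ell$ matches how the lemma is applied (to $G_m$).
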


\begin{proof}
    We prove the statement by reverse induction. More precisely, for
    $0\leq j\leq\ell$, we show that there exists $v\in C\cap U_0$ for which
    the conclusion holds for every $j\leq i\leq\ell-1$. For $j=\ell$ there
    is nothing to prove.

    Suppose the assertion holds for some $j\geq1$, witnessed by
    $v\in C\cap U_0$. If
    \[
        C_{j-1}(v)\cap U_0=C_j(v)\cap U_0,
    \]
    then the same vertex $v$ also witnesses the assertion for $j-1$.
    Otherwise, let $D_1,\ldots,D_h$ be the components of $G_{j-1}$ which
    meet $U_0$ and are contained in $C_j(v)$. Since the $U_0$-part of the
    component has changed, we have $h\geq2$. Hence, for some $r\in[h]$,
    \[
        2|D_r\cap U_0|\leq |C_j(v)\cap U_0|.
    \]
    Choose $u\in D_r\cap U_0$. Then
    \[
        C_{j-1}(u)=D_r
        \qquad\text{and}\qquad
        C_j(u)=C_j(v),
    \]
    so
    \[
        2|C_{j-1}(u)\cap U_0|
        \leq |C_j(u)\cap U_0|.
    \]
    Moreover, $C_i(u)=C_i(v)$ for every $i\geq j$, so $u$ inherits all the
    required inequalities from $v$ at the later stages. Thus $u$ witnesses
    the assertion for $j-1$, completing the induction step.
\end{proof}
\section{The thinning–sprinkling technique: a primer}\label{sec: connectivity}
This section provides a detailed illustration of how to use the thinning--sprinkling technique in a simple setting: the robustness of vertex connectivity under random vertex sampling. We prove Theorem~\ref{thm: subsampling k-connected}, which states that the subgraph induced by a $p$-random set of vertices in a $k$-connected graph is, with high probability, $\Omega(p^2k)$-connected. The result itself is not new and was proved in~\cite{connected_subsampling}, but we give a different, simpler proof and present it in detail, as it will serve as a model for the thinning and sprinkling arguments used throughout the paper.

The first lemma is the thinning step. Using the notation introduced in Section~\ref{sec: notation}, it says that if $G[V_p]$ does not have the desired connectivity with high enough probability, then, with still non-negligible probability, $G[V_{p/2}]$ has two distinct components, each meeting $V_{p/4}$.

\begin{lemma}\label{lem: k-connected thinning}
Let $0<\varepsilon\leq 1/8$, let $G$ be a $k$-connected graph and suppose that $\varepsilon p^2k\geq 4$. If 
\[
    \mathbb P\bigl(G[V_p]\text{ is not $\varepsilon p^2k$-connected}\bigr)
    \geq2^{-\varepsilon p^2k},
\]
then, with probability at least $2^{-4\varepsilon p^2k}$, $G[V_{p/2}]$ has two distinct connected components, each meeting $V_{p/4}$.
\end{lemma}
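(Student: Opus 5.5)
Let $s=\varepsilon p^2k$. Throughout we condition on $V_p$ and use only the thinning randomness: $V_{p/2}$ is a $1/2$-random subset of $V_p$, and $V_{p/4}$ is a $1/2$-random subset of $V_{p/2}$. The goal is to show that whenever $G[V_p]$ has a small separator, thinning produces the required obstruction with probability about $2^{-3s}$. Multiplying by the assumed probability $2^{-s}$ of a small separator then gives the bound $2^{-4s}$.

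\textbf{Step 1: a structured separator.} Suppose $G[V_p]$ is not $s$-connected. Then either $|V_p|\le s$, or there is a set $S\subseteq V_p$ with $|S|<s$ such that $G[V_p]-S$ is disconnected. I first want to rule out the case $|V_p|\le s$, or at least show it contributes negligibly. Since $G$ is $k$-connected, $n\ge k+1$. Hence $|V_p|$ stochastically dominates $\mathrm{Bin}(k+1,p)$, whose mean $pk$ is at least $s/(\varepsilon p)\ge 8s$. A Chernoff bound gives $\mathbb P(|V_p|\le s)\le e^{-\Omega(pk)}$, which is much smaller than $2^{-s}$ because $pk\ge p^2k=s/\varepsilon\ge 8s$. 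The constants may need slack here; this is the spot where $\varepsilon\le 1/8$ gets used. So with probability at least roughly $2^{-s}-2^{-2s}\ge 2^{-s-1}$, a genuine separator $S$ exists, and $V_p\setminus S$ splits into two nonempty parts $A\cup B$ with no edges between them. Fix such a choice measurably, say the lexicographically first one.

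\textbf{Step 2: thinning.} Pick $a\in A$ and $b\in B$. Consider the event that $S\cap V_{p/2}=\emptyset$, that $a,b\in V_{p/2}$, and that $a,b\in V_{p/4}$. This has conditional probability $2^{-|S|}\cdot 4^{-2}\ge 2^{-s-4}$. On this event, $G[V_{p/2}]$ contains no vertex of $S$. Every edge of $G[V_{p/2}]$ therefore lies inside $A$ or inside $B$, so the components containing $a$ and $b$ are distinct and both meet $V_{p/4}$. Combining the two steps gives probability at least $2^{-s-1}\cdot 2^{-s-4}=2^{-2s-5}$. Since $s\ge 4$, we have $2^{-2s-5}\ge 2^{-4s}$.

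\textbf{Main obstacle.} The main point of care is Step 1: the degenerate case where $G[V_p]$ fails to be $s$-connected simply because it is too small, and not because it has a separator. Handling it needs the lower-tail estimate on $|V_p|$ to beat $2^{-s}$, which is where the hypotheses $\varepsilon\le 1/8$ and $s\ge4$ enter. If $n$ were barely above $k$, I would use $\mathbb E|V_p|\ge pk\ge 8s$ together with a standard Chernoff bound to get $\mathbb P(|V_p|\le s)\le e^{-pk/3}\le 2^{-2s}$. The thinning computation itself is routine.
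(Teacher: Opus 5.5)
Your proposal is correct and follows the paper's proof essentially step for step. First you use a Chernoff bound, with $\varepsilon\le 1/8$, to discard the degenerate case $|V_p|\le \varepsilon p^2k$; then you thin so that the separator $S$ avoids $V_{p/2}$ and one vertex from each side lands in $V_{p/4}$, which happens with probability $2^{-|S|-4}$. Your bookkeeping is slightly tighter than the paper's ($2^{-2s-5}$ versus the paper's $2^{-2s}\cdot 2^{-2s}$), and both are at least the required $2^{-4s}$.
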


\begin{proof}
Let $\mathcal E_1$ be the event that $V_p$ contains at least $\varepsilon p^2k+1$ vertices but $G[V_p]$ is not $\varepsilon p^2k$-connected and let $\mathcal E_2$ be the event that $G[V_{p/2}]$ has two distinct connected components, each meeting $V_{p/4}$. Condition on an outcome of $V_p$ for which $\mathcal E_1$ occurs, and let $S\subseteq V_p$ be a cut-set witnessing this. Choose $u,v\in V_p\setminus S$ lying in distinct components of $G[V_p]-S$. Under the thinning coupling (described in the beginning of Section 2), we have $S\cap V_{p/2}=\emptyset$ and $u,v\in V_{p/4}$ with probability $2^{-|S|-4}\geq 2^{-2\varepsilon p^2k}$, where we used $\varepsilon p^2k\geq 4$. Whenever this happens, $u$ and $v$ lie in distinct components of $G[V_{p/2}]$, and both components meet $V_{p/4}$. Thus, $\mathbb P\bigl(\mathcal E_2\mid\mathcal E_1\bigr)\geq 2^{-2\varepsilon p^2k}$. By a Chernoff bound, we have that $V_p$ contains at least $pk/2\geq\varepsilon p^2k+1$ vertices with probability at least $1-e^{-pk/8}\ge 1-2^{-1-\varepsilon p^2 k}$, using $\varepsilon\leq 1/8$ for the last inequality. By assumption, $G[V_p]$ is not $\varepsilon p^2k$-connected with probability at least $2^{-\varepsilon p^2k}$, and so inclusion-exclusion gives $\mathbb P\bigl (\mathcal E_1\bigr )\geq 2^{-2\varepsilon p^2k}$. Consequently, $\mathbb P\bigl (\mathcal E_2\bigr )\geq \mathbb P\bigl (\mathcal E_1\cap\mathcal E_2\bigr )=\mathbb P\bigl (\mathcal E_1\bigr )\cdot\mathbb P\bigl (\mathcal E_2\mid\mathcal E_1\bigr )\geq 2^{-4\varepsilon p^2k}$.
\end{proof}

Next we use the sprinkling step to show that the obstruction produced by the thinning lemma is in fact too unlikely to occur. Together with Lemma~\ref{lem: k-connected thinning}, this will immediately imply Theorem~\ref{thm: subsampling k-connected}.

\begin{lemma}\label{lem: k-connected sprinkling}
There exists $0<\varepsilon\leq 1/8$ such that, whenever $p^2k=\omega(\log n)$, the probability that $G[V_{p/2}]$ has two distinct connected components, each meeting $V_{p/4}$, is less than $2^{-4\varepsilon p^2k}$.
\end{lemma}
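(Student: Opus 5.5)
We prove the statement by sprinkling. Write $V_{p/2}=U_0\cup U_1\cup\dots\cup U_\ell$, where $U_0=V_{p/4}$ and $U_1,\dots,U_\ell$ are independent $q$-random sets. We take $\ell=\lceil 4\log_2 n\rceil$ (or a constant multiple of it) and choose $q$ so that $(1-q)^\ell=1-p'$; then $q\geq p'/\ell\geq p/(4\ell)$, and the union has exactly the right law. Suppose the bad event $\mathcal E_2$ holds. Then $G_\ell=G[V_{p/2}]$ has two seeded components $C\neq C'$. By Lemma~\ref{lem: minimal vertex} applied to $C$, there is $v\in C\cap U_0$ such that every change of $C_i(v)\cap U_0$ at least doubles $|C_i(v)\cap U_0|$. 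Hence $C_i(v)\cap U_0$ changes at most $\log_2 n$ times. The goal is to show that, conditional on the past, at each stage a change happens with probability at least a constant $\rho$. Then the number of changes dominates $\mathrm{Bin}(\ell,\rho)$, which exceeds $\log_2 n$ with probability $1-e^{-\Omega(\ell)}$. After a union bound over the at most $n^2$ choices of the pair of seed vertices $v\in C$, $u\in C'$, this gives the contradiction.

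The key stage estimate is as follows. Fix $v,u\in U_0$ and reveal $\mathcal F_{i-1}(v,u)$. Suppose that at stage $i-1$ the components $C_{i-1}(v)$ and $C_{i-1}(u)$ are distinct. Let $A=C_{i-1}(v)$. Since $u\notin A\cup N_G(A)$ (otherwise they would be adjacent and merged), the set $N_G(A)$ separates $v$ from $u$ in $G$. By $k$-connectivity, $|N_G(A)|\geq k$, and Menger's theorem gives $k$ internally vertex-disjoint $v$--$u$ paths. Each path leaves $A$ through a boundary vertex $w\in N_G(A)$, and no such $w$ belongs to $W_{i-1}$. The unexposed randomness of $U_i$ on $N_G(A)$ is still fresh. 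If some $w\in N_G(A)$ lands in $U_i$, then $C_i(v)\supsetneq A$, but this only enlarges the component; it does not necessarily change $C_i(v)\cap U_0$. So we instead need to find a path $P$ from $A$ to another seeded component $B$ whose internal vertices are all unexposed and all land in $U_i$. That would merge $A$ with $B$ and change the $U_0$-part.

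This is the main obstacle: sprinkling must connect two seeded components through short connectors. The natural plan uses the fact that $U_0=V_{p/4}$ is itself dense. Every vertex has degree at least $k$, so with high probability (failure $e^{-\Omega(pk)}$, union bounded over $n$) every vertex has $\Omega(pk)$ neighbours in $U_0$. Hence every vertex outside $W_{i-1}$ adjacent to $A$ that also has a neighbour in a different seeded component is a length-two connector. We count connectors via the Menger paths. Among the $k$ disjoint $v$--$u$ paths, each path's first vertex after leaving $A$ either lies in $N_G(A)\cap N_G(B)$ for some other seeded component $B$, or the path continues through unseeded vertices. To get a constant merge probability with $q\approx p/\log n$ we need about $\log n/p$ disjoint connectors of length $1$ (single vertex), or correspondingly more with longer connectors. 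The cleanest route I would try is this: declare the first stages as \emph{burn-in}, and argue that the vertex set $N_G(A)\setminus W_{i-1}$ (of size at least $k$) has, by the high-probability degree property, many vertices with a neighbour in $U_0\setminus A$. Such a vertex's $U_0$-neighbour lies in a different seeded component (it is not in $A$, since the vertex is outside $A\cup N(A)$... it is in $N(A)$ but not in $A$). Then we expect $\Omega(kq)=\Omega(pk/\log n)=\omega(1)$ such single-vertex connectors to be sprinkled, giving $\rho\geq 1/2$. This single-vertex connector requires that a positive fraction of $N_G(A)$ has a $U_0$-neighbour outside $A$. Since $N_G(A)\cap U_0=\emptyset$ is forced for unmerged boundary vertices, $U_0$-neighbours of $w\in N_G(A)$ not in $A$ automatically lie in other seeded components. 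So the real remaining issue is controlling dependence on the revealed information. I would handle it by a union bound over a fixed realisation of $U_0$ satisfying the degree property, with $\varepsilon$ taken small so that $2^{-4\varepsilon p^2k}$ dominates the error terms $e^{-\Omega(pk)}$ and $n^2e^{-\Omega(\ell)}$ after choosing $\ell=\Theta(p^2k)$ rather than $\Theta(\log n)$.
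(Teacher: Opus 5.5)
Your proposal has a genuine gap at exactly the step you flag as the main obstacle: you never establish a constant per-stage success probability.

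The claim that many vertices of $N_G(A)\setminus W_{i-1}$ have a $U_0$-neighbour outside $A$ does not follow from the degree property. A boundary vertex may have all of its $\Omega(pk)$ neighbours in $U_0$ inside $A$. For example, if $A$ lies in a large clique whose only exits are $k$ matching edges, almost all of $N_G(A)$ consists of such vertices. Nothing in your argument excludes the case where every other seeded component is at distance $3$ from $A$, and then there are no single-vertex connectors at all. A connector with two internal vertices needs both of them in the same sprinkle, so it works with probability about $q^2$. In general only about $k$ disjoint ones are guaranteed, so the per-stage probability is $O(kq^2)$. This is $o(1)$ for $q\approx p/\log n$ whenever $p^2k=o(\log^2 n)$, and always $o(1)$ for $q=\Theta(1/(pk))$.

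The choice of $\ell$ is also in tension. With $\ell=\Theta(\log n)$ you only get a failure probability polynomial in $n$, not $2^{-4\varepsilon p^2k}$. With $\ell=\Theta(p^2k)$ you get $q=\Theta(1/(pk))$, so you would need $\Omega(pk)$ disjoint single-vertex connectors in every configuration the exploration can reach. Fixing a realisation of $U_0$ cannot guarantee that.

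The missing idea in the paper is precisely \emph{not} to fix $U_0$. The exploration $\mathcal F_i(u,v)$ reveals $U_0$ only on $C_i\cup N_G(C_i)$. Suppose $C_i(u)\neq C_i(v)$ and, in a Menger family of minimum total length, fewer than $k/2$ paths have length at most $3$. Then at least $k/2$ of these paths begin $axy$ with $x\in N_G(C_i(u))$ and $y$ outside both explored neighbourhoods. The event $\{x\in U_{i+1}\}\cap\{y\in U_0\}$ then has fresh probability $q\cdot p/4=2/k$ with $q=8/(pk)$. These $k/2$ independent chances give success probability at least $1/2$ per round.

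Progress is measured by growth of $C_i(u)\cap U_0$, not by merging with an already known seeded component. By Lemma~\ref{lem: minimal vertex}, this growth can happen at most $\log_2 n$ times. The alternative outcome of a round is that $k/2$ internally disjoint paths of length at most $3$ already join the two components. With $m=p^2k/64$ rounds this gives an $e^{-\Omega(p^2k)}$ bound after a union bound over pairs. A separate final sprinkle $U_{m+1}$ of density $p/8$ then realises one of these short paths, with failure probability at most $(1-p^2/64)^{k/2}$. Since $mq+p/8=p/4\leq p'$, everything couples inside $V_{p/2}$.
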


\begin{proof}
Set $m= p^2k/64$ and $q=8/(pk)$. Let $U_0=V_{p/4}$, and let $U_1,\ldots,U_m$ be $q$-random subsets of $V(G)$, independent of each other and $U_0$. Recall from Section~\ref{sec: notation} that we write $W_i=U_0\cup\ldots\cup U_i$, $G_i=G[W_i]$, $C_i(v)$ denotes the component of $v$ in $G_i$ and a component of $G_i$ is seeded if it meets $U_0$.

\begin{claim}\label{clm: connected sprinkling claim 1}
    There exists an absolute constant $c_1>0$ such that, with probability at least $1-e^{-c_1p^2k}$, every two distinct seeded components of $G_m$ are joined in $G$ by at least $k/2$ internally vertex-disjoint paths of length at most~$3$.
\end{claim}
\begin{proof}
    Fix $u,v\in V(G)$. For $0\leq i\leq m-1$, call round $i+1$ successful if one of the following holds: if $u$ or $v$ is not in $U_0$, if $C_{i}(u)=C_{i}(v)$, if there already are $k/2$ internally vertex-disjoint paths of length at most $3$ between $C_{i}(u)$ and $C_{i}(v)$, or if $C_{i}(u)\cap U_0\neq C_{i+1}(u)\cap U_0$. We claim that, conditioned on the exploration $\mathcal{F}_{i}(u,v)$, round $i+1$ is successful with probability at least $1/2$.

Let us reveal the information associated with $\mathcal{F}_{i}(u,v)$ and suppose that the first three possibilities do not hold. Since $G$ is $k$-connected, Menger's theorem gives $k$ internally vertex-disjoint paths between $C_i(u)$ and $C_i(v)$. Choose such a family with minimum total length. Since fewer than $k/2$ of these paths have length at most $3$, at least $k/2$ have length at least $4$. For each such path, write its first three vertices, starting from $C_i(u)$, as $axy$. Note that $x\in N_G(C_i(u))$ while, by minimality, $y$ lies outside of both $C_i(u)\cup N_G(C_i(u))$ and $C_i(v)\cup N_G(C_i(v))$. Thus, the event $x\in U_{i+1}$ has probability $q$, while the event $y\in U_0$ has probability $p/4$ and is not revealed by $\mathcal F_i(u,v)$. Hence they occur simultaneously with probability $q\cdot p/4=2/k$. The relevant vertices are distinct for the different paths, so these events are independent. Whenever one such event occurs, $y$ joins the component of $u$ in $G_{i+1}$ and therefore $C_i(u)\cap U_0\neq C_{i+1}(u)\cap U_0$. Consequently, the probability that round $i+1$ is successful is at least $1-(1-2/k)^{k/2}\geq1-e^{-1}>1/2$.

Let $X_{u,v}$ be the number of successful rounds. Observe that the event of success in round $i+1$ is $\mathcal F_{i+1}(u,v)$-measurable and we already showed $\mathbb P\bigl(\text{Success in round $i+1$}\mid\mathcal F_{i}(u,v)\bigr )\geq 1/2$. As laid out in Section~\ref{sec: notation}, $X_{u,v}$ then stochastically dominates $\operatorname{Bin}(m,1/2)$ with $m=p^2k/64$. Recalling that $p^2k=\omega(\log n)$, a standard binomial lower-tail estimate gives $\mathbb P(X_{u,v}\leq\log_2 n)\leq e^{-p^2k/128}\leq n^{-3}$. A union bound over all pairs $u,v$ therefore shows that, for some absolute constant $c_1>0$, with probability at least $1-e^{-c_1p^2k}$, we have $X_{u,v}>\log_2 n$ for every $u,v\in V(G)$.

Assume this event holds, and suppose that $A,B$ are two distinct seeded components of $G_m$ which are joined by fewer than $k/2$ internally vertex-disjoint paths of length at most $3$. By Lemma~\ref{lem: minimal vertex}, applied to $G_m$, there exists $u\in A\cap U_0$ such that, for every $i\leq m-1$, either $C_i(u)\cap U_0=C_{i+1}(u)\cap U_0$ or $2|C_i(u)\cap U_0|\leq|C_{i+1}(u)\cap U_0|$. Let $v$ be any vertex in $B\cap U_0$. Since $C_i(u)\subseteq A$ and $C_i(v)\subseteq B$ for every $i\leq m$, these two components never coincide. Moreover, if at some earlier stage there were $k/2$ internally vertex-disjoint paths of length at most $3$ between them, the same paths would connect $A$ and $B$, contrary to our assumption. Hence every successful round for the pair $u,v$ satisfies $C_i(u)\cap U_0\neq C_{i+1}(u)\cap U_0$, implying that $2|C_i(u)\cap U_0|\leq|C_{i+1}(u)\cap U_0|$. There are more than $\log_2 n$ such rounds which is impossible. This proves Claim~\ref{clm: connected sprinkling claim 1}.
\end{proof}

\begin{claim}\label{clm: connected sprinkling claim 2}
    Condition on an outcome of $U_0,\ldots,U_m$ satisfying Claim~\ref{clm: connected sprinkling claim 1}, and let $U_{m+1}$ be an independent $p/8$-random subset of $V(G)$. There exists an absolute constant $c_2>0$ such that, with probability at least $1-e^{-c_2p^2k}$, all seeded components of $G_m$ lie in one component of $G_{m+1}$.
\end{claim}
\begin{proof}
    Fix two distinct seeded components $A,B$ of $G_m$. By Claim~\ref{clm: connected sprinkling claim 1}, there are $k/2$ internally vertex-disjoint paths of length at most $3$ between them. Each such path has at most two internal vertices. Every internal vertex belongs to $U_{m+1}$ independently with probability $p/8$. Thus, each path is contained in $G_{m+1}$ with probability at least $p^2/64$. Since the paths are internally vertex-disjoint, these events are independent, and therefore the probability that none of them joins $A$ to $B$ is at most $(1-p^2/64)^{k/2}\leq e^{-c_2p^2k}$ for some absolute constant $c_2>0$. There are at most $n^2$ pairs of seeded components. Since $p^2k=\omega(\log n)$, a union bound proves Claim~\ref{clm: connected sprinkling claim 2}.
\end{proof}
It remains only to couple these sprinklings with $V_{p/2}$. Recall the definition of $p'$ and $V_{p'}$ in Section~\ref{sec: notation} and note that $p'\geq p/4$. Put $R=U_1\cup\cdots\cup U_{m+1}$. The set $R$ is $\rho$-random for some $\rho$ satisfying $\rho\leq mq+p/8= p/4$. Hence we may couple $R$ as a subset of $V_{p'}$. On the event supplied by Claims~\ref{clm: connected sprinkling claim 1} and~\ref{clm: connected sprinkling claim 2}, all components meeting $U_0=V_{p/4}$ already lie in one component of $G_{m+1}$, and therefore also in one component of $G[V_{p/2}]$. Thus, choosing $\varepsilon>0$ sufficiently small, the probability that $G[V_{p/2}]$ has two distinct components, each meeting $V_{p/4}$, is less than $2^{-4\varepsilon p^2k}$.
\end{proof}

\section{Subsampling tough graphs} 
    
    In this section we adapt the thinning--sprinkling technique to tough graphs and use it to prove Theorem~\ref{thm: subsample tough}, which says that a $p$-random induced subgraph of a $t$-tough graph is, with very high probability, $\Omega(p^2t)$-tough, provided $p^2t$ is large compared to $\log n$. Throughout, we rely on the notation introduced in Section~\ref{sec: notation}. We begin with the thinning argument, which is essentially the same as for connectivity, except that a failure of toughness may give rise to an arbitrary number of seeded components. 
    \begin{lemma}\label{lem: tough thinning}
    Let $\varepsilon>0$, let $G$ be a $t$-tough graph and suppose that $\varepsilon p^2t\geq 2$. If
    \[
    \mathbb P\bigl(G[V_p]\text{ is not $\varepsilon p^2t$-tough}\bigr)\geq 2^{-\varepsilon p^2t},
    \]
    then there exists $k\geq 2$ such that, with probability at least $2^{-4k\cdot \varepsilon p^2 t}$, $G[V_{p/2}]$ has $k$ distinct connected components, each meeting $V_{p/4}$.
\end{lemma}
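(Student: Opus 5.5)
We follow the proof of Lemma~\ref{lem: k-connected thinning}, with one twist: a failure of toughness can produce any number of components, so we must first pigeonhole over that number. Set $s=\varepsilon p^2t$.

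\textbf{Step 1: define the bad events.} If $G[V_p]$ is not $s$-tough, there is a witness $S\subseteq V_p$ such that $G[V_p]-S$ has $k\geq 2$ components and $|S|<sk$. Choose such a witness canonically for each outcome of $V_p$. For each $k\geq 2$, let $\mathcal E_1^{(k)}$ be the event that the canonical witness has exactly $k$ components. These events partition the failure event. (Complete graphs count as infinitely tough, so a failure always comes with such a witness, possibly with $S=\emptyset$ when $G[V_p]$ is disconnected.)

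\textbf{Step 2: pigeonhole over $k$.} A single $k$ cannot be expected to carry a $1/n$ fraction of the failure probability. Instead we use weights decaying in $k$: we have $\sum_{k\geq 2}2^{-k s}\leq 2\cdot 2^{-2s}\leq 2^{-s}/2$, where the last step uses $s\geq 2$. Hence, if $\mathbb P(\mathcal E_1^{(k)})<2\cdot 2^{-ks}$ for every $k\geq 2$, the total failure probability is below $\sum_{k\geq 2}2\cdot 2^{-ks}\leq 2^{-s}$, contradicting the hypothesis. So some $k\geq 2$ satisfies $\mathbb P(\mathcal E_1^{(k)})\geq 2^{-ks}$.

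\textbf{Step 3: thin.} Fix this $k$ and condition on an outcome of $V_p$ in $\mathcal E_1^{(k)}$, with witness $S$ and components $C_1,\dots,C_k$. Pick representatives $u_j\in C_j$. Under the thinning coupling, the event $S\cap V_{p/2}=\emptyset$ has probability $2^{-|S|}\geq 2^{-ks}$. Independently of this, the event that all $u_j$ lie in $V_{p/4}$ has probability $4^{-k}$. On the intersection, $G[V_{p/2}]\subseteq G[V_p]-S$, so the $u_j$ lie in $k$ distinct components of $G[V_{p/2}]$, each meeting $V_{p/4}$.

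\textbf{Step 4: combine.} This gives
\[
\mathbb P(\mathcal E_2^{(k)})\geq 2^{-ks}\cdot 2^{-ks}\cdot 2^{-2k}\geq 2^{-3ks},
\]
where the last step uses $2k\leq ks$, which holds because $s\geq 2$. This is at least $2^{-4k\varepsilon p^2t}$, as required.

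The only nonroutine point is Step~2: a naive union over up to $n$ values of $k$ would lose a factor of $n$. The geometric decay in $k$, together with the slack between the exponents $3k$ and $4k$, absorbs this loss with room to spare. Step~4 also relies on using $s\geq 2$ to absorb the $4^{-k}$ factor into $2^{-ks}$.
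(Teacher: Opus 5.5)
Your proof is correct and is essentially the paper's argument: you pigeonhole over the number $k$ of components using geometrically decaying weights, then thin by forcing $S\cap V_{p/2}=\emptyset$ and one representative of each component into $V_{p/4}$, at cost $2^{-|S|-2k}$. The differences are cosmetic. You partition the failure event via a canonical witness and use absolute weights $2^{-ks}$, whereas the paper union-bounds over the events $\mathcal E_{1,k}$ (``at least $k$ components'') and uses relative weights $2^{-k}$, which gives $\mathbb P(\mathcal E_{1,k})\geq 2^{-s-k}$. Also, your aside that a single $k$ ``cannot be expected to carry a $1/n$ fraction'' is misphrased: averaging does give that much, and the real point is that losing a factor $n$ is unaffordable without an assumption like $s=\omega(\log n)$.
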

\begin{proof}
     Let $\mathcal E_1$ be the event that $G[V_p]$ is not $\varepsilon p^2t$-tough and, for $2\leq k\leq n$, let $\mathcal E_{1,k}$ be the event that there exists $S\subseteq V_p$ of size less than $k\cdot \varepsilon p^2t$ such that $G[V_p]-S$ has at least $k$ distinct connected components. Note that if $\mathcal E_1$ happens then so does $\mathcal E_{1,k}$ for some $k\geq 2$. Therefore, we have $\mathbb P\bigl(\mathcal E_1\bigr)\leq\sum_{k=2}^n\mathbb P\bigl(\mathcal E_{1,k}\bigr)$. Since $\sum_{k=2}^n2^{-k}<1$, we can fix $2\leq k\leq n$ for which $\mathbb P\bigl(\mathcal E_1\bigr)/2^{k}\leq \mathbb P\bigl(\mathcal E_{1,k}\bigr)$. Let $\mathcal E_{2}$ be the event that $G[V_{p/2}]$ contains at least $k$ distinct connected components, each meeting $V_{p/4}$. 
     
    Condition on $V_p$ for which $\mathcal E_{1,k}$ occurs and let $S\subseteq V_p$ be a cut-set witnessing this. Let $v_1,\ldots,v_k\in V_p\setminus S$ be vertices from distinct components of $G[V_p]-S$. Under the thinning coupling, we have $S\cap V_{p/2}=\emptyset$ and $v_1,\ldots,v_k\in V_{p/4}$ with probability $2^{-|S|-2k}\geq 2^{-2k\cdot \varepsilon p^2 t}$, where we used $\varepsilon p^2t\geq 2$. Whenever this happens, $v_1,\ldots,v_k$ lie in distinct components of $G[V_{p/2}]$, each meeting $V_{p/4}$. Thus, $\mathbb{P}\bigl(\mathcal E_{2}\mid\mathcal E_{1,k}\bigr)\geq 2^{-2k\cdot \varepsilon p^2 t}$. We also have $\mathbb P\bigl(\mathcal E_{1,k}\bigr)\geq \mathbb P\bigl(\mathcal E_1\bigr)/2^{k}\geq 2^{-2k\cdot \varepsilon p^2t}$, again using $\varepsilon p^2t\geq 2$. Consequently,
    $\mathbb{P}\bigl(\mathcal E_{2}\bigr)\geq \mathbb P\bigl(\mathcal E_{1,k}\cap \mathcal E_2\bigr)=\mathbb P\bigl(\mathcal E_{1,k}\bigr)\cdot \mathbb P\bigl(\mathcal E_2\mid\mathcal E_{1,k}\bigr)\geq 2^{-4k\cdot \varepsilon p^2t}.
    $
 \end{proof}
    Next, we turn to the sprinkling part of the argument, which, together with Lemma~\ref{lem: tough thinning}, immediately implies Theorem~\ref{thm: subsample tough}.
\begin{lemma}\label{lem: toughness sprinkling}
    There exists $\varepsilon>0$ such that whenever $p^2t=\omega(\log n)$ the following holds for every $k\geq 2$. The probability that $G[V_{p/2}]$ contains $k$ distinct connected components, each meeting $V_{p/4}$, is less than $2^{-4k\cdot \varepsilon p^2t}$.
\end{lemma}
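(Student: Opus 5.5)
The plan is to mirror the proof of Lemma~\ref{lem: k-connected sprinkling}, but with two changes. First, every failure probability must be made exponentially small in $k\cdot p^2t$ rather than in $p^2t$. Second, toughness replaces Menger's theorem in the expansion step.

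\textbf{Reduction to a fixed tuple.} We may assume $G$ is not complete, so $G$ is $2t$-connected (a minimum vertex cut $S$ splits $G$ into at least two components, so $|S|\ge 2t$). Fix $k\ge2$ and set $U_0=V_{p/4}$, with sprinklings $U_1,\ldots,U_m$ that are $q$-random, where $m=\Theta(p^2t)$ and $q=\Theta(1/(pt))$, plus a final $p/8$-random set $U_{m+1}$. The coupling with $V_{p'}$ goes exactly as before. If $G[V_{p/2}]$ has $k$ seeded components, then there are $v_1,\ldots,v_k\in U_0$ whose components in $G_{m+1}$ are pairwise distinct. Applying Lemma~\ref{lem: minimal vertex} inside each of the $k$ components of $G_m$, we may choose each $v_j$ so that its $U_0$-part only ever grows by doubling. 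There are at most $n^k=2^{k\log_2 n}$ such tuples. Since $p^2t=\omega(\log n)$, it therefore suffices to show the following: for a fixed tuple, the probability that the $v_j$ are minimal in this sense and lie in $k$ distinct components of $G_{m+1}$ is at most $e^{-ckp^2t}$.

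\textbf{Growth phase.} Fix the tuple and reveal $\mathcal F_i(v_1,\ldots,v_k)$. Suppose the components $C_i(v_j)$ are still distinct and not yet pairwise well connected by short paths. Let $S=\bigcup_j N_G(C_i(v_j))$. Each $C_i(v_j)$ is a union of components of $G-S$, so $G-S$ has at least $k$ components and toughness gives $|S|\ge tk$.

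I would choose shortest paths leaving the components, as in Claim~\ref{clm: connected sprinkling claim 1}. Then a constant fraction of the indices $j$ each have $\Omega(t)$ vertex-disjoint paths $a\,x\,y$ with $x\in N_G(C_i(v_j))$ and $y$ unexplored. For such a $j$, the event that some $x\in U_{i+1}$ and the corresponding $y\in U_0$ occurs with probability $\Omega(1)$. When it occurs, the $U_0$-part of the $j$-th component changes.

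Summing over rounds, the number of (index, round) growth events stochastically dominates $\mathrm{Bin}(\Omega(mk),\Omega(1))$. Minimality allows at most $k\log_2 n$ such events in total. Hence the probability that this phase fails is at most $e^{-cmk}=e^{-c'kp^2t}$.

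The delicate point is converting the global bound $|S|\ge tk$ into per-component path families without double counting. I would do this by applying toughness to the set $S'=\bigcup_{j\notin J}N_G(C_i(v_j))$ for the set $J$ of "poor" components. This forces $|J|$ to be small unless there are many good components.

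\textbf{Merging phase.} This is the step I expect to be the main obstacle. The growth phase ends with the $k$ components pairwise (or at least along a spanning structure) joined by $\Omega(t)$ internally disjoint paths of length at most $3$. We must now show that the single sprinkle $U_{m+1}$ leaves $k$ distinct components with probability only $e^{-ckp^2t}$.

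I would first try the following. Choose a perfect or near-perfect matching of the $k$ components into $\lfloor k/2\rfloor$ pairs. For each pair, pick path families greedily so that the families for different pairs are vertex-disjoint. Toughness should make this possible: if the greedy choice gets stuck, the used vertices together with the remaining neighborhoods form a separator that is too small. The pairs then fail independently, each with probability $e^{-cp^2t}$.

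It is in fact enough that every pair merges. This reduces the count of distinct components among the $v_j$ below $k$, and the resulting bound $e^{-c(k/2)p^2t}$ is of the required form. Choosing $\varepsilon$ small then beats both the $n^k$ union bound and the factor $2^{-4k\varepsilon p^2t}$.
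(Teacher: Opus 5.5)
Your framework (a fixed tuple of minimal vertices from Lemma~\ref{lem: minimal vertex}, a growth phase, a final $p/8$-sprinkle, an $n^k$ union bound, the coupling) is the paper's. There is, however, a genuine gap: both of your phases rest on a per-pair structure that toughness does not supply. In the merging phase you want a near-perfect matching of the $k$ components into pairs, joined by vertex-disjoint families of $\Omega(t)$ paths. In the growth phase you claim that, as long as the components are not ``pairwise (or along a spanning structure)'' well connected, a constant fraction of them have $\Omega(t)$ disjoint escape paths $axy$. Both claims fail. Let $C_1$ be a clique on $tk$ vertices. For $2\le j\le k$, let $C_j$ be a clique on $t$ vertices attached only through a private clique $N_j$ on $2t$ vertices that is complete to both $C_j$ and $C_1$. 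This graph is $t$-tough: removing $C_1$ costs $tk$ and creates at most $k-1$ components, and every other split costs $2t$ per extra component. The $C_j$ are pairwise non-adjacent, every path of length at most $3$ between two of them has an end in $C_1$, and $V(G)=\bigcup_j(C_j\cup N_G(C_j))$. Hence a matching-based merge can use only one pair, so its failure bound does not improve with $k$. Moreover, the components are not pairwise well connected although no escape path exists at all. If you stop at a ``spanning structure'' instead, take two such stars and link their centres only through a path of cliques $N^*,O,N'^*$ on $2t$ vertices each. This graph is still $t$-tough, no short path joins the two stars, and only the two centres have escape paths. Either way, your domination of the number of growth events by $\operatorname{Bin}(\Omega(mk),\Omega(1))$ has no basis.

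The missing idea is that one merger is enough. You therefore only need a \emph{total} count: $kt/4$ internally disjoint paths of length at most $3$, each joining \emph{some} two of the $k$ components; both examples satisfy this for $k\ge3$. Then none of these paths appears in $G_{m+1}$ with probability at most $(1-p^2/64)^{kt/4}=e^{-\Omega(kp^2t)}$, which beats the $n^k$ choices. Relatedly, ``it is enough that every pair merges'' should read ``at least one pair''; your $e^{-c(k/2)p^2t}$ is the probability that no pair merges. The substitute for Menger is Lemma~\ref{lem: menger for tough}. Take a maximal family of paths meeting only in $\bigcup_hC_h$, each either a connecting path of length at most $3$ or an escape path $axy$ with $y\notin\bigcup_h(C_h\cup N_G(C_h))$. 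It has at least $kt/2$ members: otherwise the fewer than $kt$ vertices it uses outside $\bigcup_hC_h$ leave fewer than $k$ components, and a shortest path between two of the $C_h$ in what remains either is a connecting path or begins with an escape path. So if fewer than $kt/4$ connecting paths exist, there are at least $kt/4$ escape paths in total, though possibly all at one component. The paper makes this suffice by scaling rather than spreading. It takes $m=kp^2t/128$ rounds with $q=16/(kpt)$, still $mq=p/8$, so each escape path fires with probability $4/(kt)$. Every round then grows some component with probability at least $1-e^{-1}$, no matter where the escape paths sit, and $\operatorname{Bin}(m,1/2)\le k\log_2 n$ has probability $e^{-\Omega(kp^2t)}$. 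To keep your scaling $m=\Theta(p^2t)$, you would need this total-count stopping rule together with an extra spreading lemma, guaranteeing $\Omega(k)$ components with $\Omega(t)$ disjoint escape paths each. Such a lemma does follow from a max-flow/min-cut argument combined with toughness, but it is work the paper's scaling avoids.
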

The sprinkling step in the previous section (Lemma~\ref{lem: k-connected sprinkling}) used Menger's theorem. For toughness, we no longer have such a tool. Instead we use the following elementary lemma.
    \begin{lemma}\label{lem: menger for tough}
        Let $G$ be a $t$-tough graph and let $V_1,\ldots,V_k$ be disjoint nonempty subsets of $V(G)$ with no edges between distinct $V_i$, where $k\geq2$. Then, there exist paths $P_1,\ldots, P_{kt/2}$ only intersecting on $\bigcup V_j$ where every $P_i$ either is a path of length at most $3$ joining two distinct sets from $V_1,\ldots,V_k$, or $P_i$ is of length $2$ with one endpoint in $V_j$, for some $j$, and the other endpoint is outside of $\bigcup_{h=1}^k\bigl(V_h\cup N_G(V_h)\bigr)$.
    \end{lemma}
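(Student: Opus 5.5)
Take a maximal family $\mathcal P=\{P_1,\ldots,P_r\}$ of paths of the two allowed types that intersect only inside $V^*:=\bigcup_j V_j$. Suppose $r<kt/2$. We will build a separating set $S$ that violates $t$-toughness. Let $I$ be the set of internal vertices of paths in $\mathcal P$; type-2 paths also contribute their outer endpoint, so include it in $I$ as well. Each path contributes at most $2$ such vertices, hence $|I|\le 2r<kt$. The basic idea is that $I$ should block all short connections, so that removing $I$ together with suitable neighbourhoods leaves at least $k$ components.

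First record what maximality gives, with $N:=\bigcup_h N_G(V_h)\setminus V^*$ and $N':=N\setminus I$. Any vertex $x\in N'$ has all its neighbours outside $V^*$ lying in $I\cup N$. Indeed, if $x$ had a neighbour $y\notin V^*\cup N\cup I$, then $a x y$ with $a\in V_j$ would be an admissible type-2 path avoiding $I$. Similarly, $x\in N'$ has neighbours in only one $V_j$, since otherwise $a x b$ is a type-1 path of length $2$. Finally, two adjacent vertices $x,x'\in N'$ attached to different $V_j$'s would give a type-1 path of length $3$. So every vertex of $N'$ can be labelled by its unique index $j$, and edges inside $N'$ only join vertices with the same label.

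Now set $S:=I$. Consider $G-S$ and let $X$ be everything outside $V^*\cup N$. By the type-2 observation, $X$ has no neighbours in $N'$. Moreover $X$ has no neighbours in $V^*$, by the definition of $N$. The sets $B_j:=V_j\cup\{x\in N':\text{label }j\}$ are nonempty, pairwise non-adjacent, and not adjacent to $X$. Hence if $X\ne\emptyset$, then $G-S$ has at least $k+1$ components; otherwise it has at least $k\ge 2$. Toughness then forces $|S|\ge tk$, contradicting $|S|<kt$. This shows $r\ge kt/2$.

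The main obstacle is degenerate cases of the toughness definition. If $S$ ends up containing everything except a single component, for instance when $k$ parts merge into fewer, there is no contradiction, so I must check that each $V_j$ survives in $G-S$. This holds because $S\subseteq V(G)\setminus V^*$ and the $V_j$ are nonempty. A second point is that the convention for complete graphs is harmless: the $V_j$ are non-adjacent and $k\ge2$, so $G$ is not complete.
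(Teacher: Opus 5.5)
Your proof is correct and is essentially the paper's: both take a maximal family, delete the at most $2r<kt$ path vertices lying outside $V^*$, and invoke toughness. The paper takes a shortest path between two distinct $V_j$ in the remaining graph and reads a length-$2$ escape path off its first three vertices, whereas you exhibit the $k$ mutually non-adjacent pieces $B_j$ directly. This is the same local analysis in contrapositive form, much as the paper argues with the sets $Q_i$ in Lemma~\ref{lem: vertex-transitive connectors}.

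One small repair is needed. Define $I$ as all path vertices outside $V^*$ rather than as the internal vertices, because an admissible length-$3$ path $a\,b\,x\,c$ with $a,b\in V_1$ and $c\in V_2$ has an internal vertex in $V^*$. With your definition this falsifies your claim $S\subseteq V(G)\setminus V^*$. With the corrected definition the bound $|I|\le 2r$ still holds.
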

    \begin{proof}
        Let $P_1,\ldots,P_\ell$ be a maximal collection of paths of one of the two types described above, intersecting only on $\bigcup V_j$. Suppose that $\ell<kt/2$ as otherwise we are done. Let $W$ be the set of vertices appearing in $P_1,\ldots,P_\ell$ but not in $V_1,\ldots,V_k$. Then, $|W|\leq 2\ell<kt$. Since $G$ is $t$-tough, $G-W$ has fewer than $k$ connected components. Hence, there exists a path in $G$ with one endpoint in $V_i$ and the other in $V_j$ for some $i\neq j$. Let $P$ be a shortest path in $G-W$ joining two distinct sets among $V_1,\ldots,V_k$. Note that $P$ has length at least $4$, as otherwise we get a contradiction to the maximality of $\ell$. Let $v_1,v_2,v_3$ be an initial segment of $P$ where $v_1\in V_j$ for some $j$. Since $P$ was chosen as short as possible, we have that $v_3\notin \bigcup_{h=1}^k\bigl(V_h\cup N_G(V_h)\bigr)$. Thus, $v_1v_2v_3$ is a path of the second type, again contradicting the maximality of $\ell$.
    \end{proof}
\begin{proof}[Proof of Lemma~\ref{lem: toughness sprinkling}]
    Let us fix some $k\geq 2$. Set $m=k\cdot p^2t/128$ and $q=16/(k\cdot pt)$. Let $U_0=V_{p/4}$, and let $U_1,\ldots,U_m$ be $q$-random subsets of $V(G)$, independent of each other and $U_0$.
    \begin{claim}\label{clm: tough sprinkling claim 1}
        There exists an absolute constant $c_1>0$ such that, with probability at least $1-e^{-k\cdot c_1p^2 t}$, every collection of $k$ distinct seeded components of $G_m$ is joined in $G$ by at least $kt/4$ internally vertex-disjoint paths of length at most $3$, each joining two distinct components of the collection.
    \end{claim}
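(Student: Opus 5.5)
The proof follows Claim~\ref{clm: connected sprinkling claim 1}, with Lemma~\ref{lem: menger for tough} used in place of Menger's theorem. The natural unit is now a $k$-tuple of vertices $v_1,\ldots,v_k$ instead of a pair.

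\emph{Rounds and success.} Fix $v_1,\ldots,v_k$. For $0\le i\le m-1$, round $i+1$ is \emph{successful} if any of the following holds:
\begin{itemize}
\item some $v_j\notin U_0$;
\item two of the components $C_i(v_1),\ldots,C_i(v_k)$ coincide;
\item these $k$ components are already joined in $G$ by $kt/4$ internally vertex-disjoint paths of length at most $3$, each joining two distinct components of the collection;
\item $C_i(v_j)\cap U_0\neq C_{i+1}(v_j)\cap U_0$ for some $j$.
\end{itemize}

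\emph{Each round succeeds with probability at least $1/2$.} Condition on $\mathcal F_i(v_1,\ldots,v_k)$ and suppose the first three alternatives fail. Apply Lemma~\ref{lem: menger for tough} with $V_j=C_i(v_j)$; distinct components of $G_i$ have no edges between them. This gives $kt/2$ paths intersecting only on $\bigcup V_j$. At most $kt/4$ of them have the first type, since otherwise we would be in the third alternative. Hence at least $kt/4$ have the second type, of the form $axy$ with $a\in V_j$, $x\in N_G(V_j)$ and $y\notin\bigcup_h (V_h\cup N_G(V_h))$.

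For such a path, the vertex $y$ has not been exposed. The event $\{x\in U_{i+1},\, y\in U_0\}$ has probability $q\cdot p/4=4/(kt)$, and it forces $y$ into $C_{i+1}(v_j)\cap U_0$. The middle vertices $x$ are distinct across paths, and so are the endpoints $y$. So, as in Claim~\ref{clm: connected sprinkling claim 1}, the events are independent and at least one occurs with probability at least
\[
1-\Bigl(1-\tfrac{4}{kt}\Bigr)^{kt/4}\ge 1-e^{-1}>\tfrac12 .
\]
One point needs care here. The same $x$ may be adjacent to several $V_h$, and $x$ may already have been exposed as absent from $U_0\cup\cdots\cup U_i$. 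This is harmless, because $U_{i+1}$ is fresh randomness.

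\emph{Concentration and union bound.} The number of successful rounds stochastically dominates $\operatorname{Bin}(m,1/2)$, where $m=kp^2t/128$. Hence
\[
\mathbb P\bigl(\text{at most } k\log_2 n \text{ successes}\bigr)\le e^{-c\,kp^2t}.
\]
We union bound over the at most $n^k$ tuples. Since $p^2t=\omega(\log n)$, the factor $n^k=e^{k\log n}$ is absorbed, leaving failure probability $e^{-c_1kp^2t}$.

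\emph{Deterministic contradiction.} Suppose $A_1,\ldots,A_k$ are distinct seeded components of $G_m$ violating the claim. For each $A_j$, apply Lemma~\ref{lem: minimal vertex} to obtain $v_j\in A_j\cap U_0$ whose $U_0$-part at least doubles whenever it changes. The first two alternatives never occur, since $C_i(v_j)\subseteq A_j$. The third never occurs either: paths at an earlier stage would persist, because components only grow and the paths join the corresponding $A_j$'s. So every successful round doubles $|C_i(v_j)\cap U_0|$ for some $j$. Each $j$ can double at most $\log_2 n$ times, so there are at most $k\log_2 n$ successful rounds in total. This contradicts the concentration bound.

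\emph{Main obstacle.} The delicate point is the independence and measurability bookkeeping in the probability estimate for a single round. We must check that $y$ lies outside every explored set $C_i(v_h)\cup N_G(C_i(v_h))$, so that $y\in U_0$ is still unexposed. We must also check that the threshold of $k\log_2 n$ successes still works against the $n^k$ union bound. Both follow from the choice $m=\Theta(kp^2t)$ together with $p^2t=\omega(\log n)$.
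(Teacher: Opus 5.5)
Your proposal is correct and follows the paper's proof essentially step for step. It uses the same four success alternatives, the same use of Lemma~\ref{lem: menger for tough} to extract at least $kt/4$ escape paths $axy$ that each succeed with probability $q\cdot p/4=4/(kt)$, the same domination by $\operatorname{Bin}(m,1/2)$ with a union bound over $n^k$ tuples, and the same doubling contradiction via Lemma~\ref{lem: minimal vertex}, with the independence bookkeeping (distinct $x$'s and $y$'s, fresh $U_{i+1}$, unexposed $y$) handled as the paper does.
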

    \begin{proof}
        Fix $v_1,\ldots,v_k\in V(G)$. For $0\leq i\leq m-1$, call round $i+1$ successful if one of the following holds: if at least one of $v_1,\ldots,v_k$ is not in $U_0$, if $C_i(v_j)=C_i(v_h)$ for some $j\neq h$, if there are $tk/4$ internally vertex-disjoint paths of length at most $3$ each joining two distinct components from $C_i(v_1),\ldots,C_i(v_k)$, or if $C_i(v_j)\cap U_0\neq C_{i+1}(v_{j})\cap U_0$ for some $j$. First, we claim that, conditioned on exploration history $\mathcal F_i(v_1,\ldots, v_k)$, round $i+1$ is successful with probability at least $1/2$.

        Let us reveal the information associated with $\mathcal F_i(v_1,\ldots, v_k)$ and suppose that the first three possibilities do not hold. Let $P_1,\ldots, P_{kt/2}$ be a collection of paths as guaranteed by Lemma~\ref{lem: menger for tough} applied with $C_i(v_1),\ldots,C_i(v_k)$. Since we assumed that there is no collection of $kt/4$ internally vertex-disjoint paths joining two distinct components from $C_i(v_1),\ldots,C_i(v_k)$, it follows that at least $kt/4$ of them are of the form $axy$ with $a\in C_i(v_j)$, for some $j$, and $y\notin\bigcup_{h=1}^k\bigl(C_i(v_h)\cup N_G(C_i(v_h))\bigr)$. Thus, the event $x\in U_{i+1}$ happens with probability $q$ and $y\in U_0$ with probability $p/4$ since both events are independent of the information revealed by $\mathcal F_i(v_1,\ldots,v_k)$. Hence, they occur simultaneously with probability $q\cdot p/4=4/(kt)$. As the paths are internally vertex-disjoint, these events are independent for every path. Furthermore, whenever one such event occurs then $C_{i}(v_j)\cap U_0\neq C_{i+1}(v_j)\cap U_0$ since only the latter contains $y$. Consequently, round $i+1$ is successful with probability at least $1-(1-4/(kt))^{kt/4}\geq 1-e^{-1}>1/2$.

        Let $X_{v_1,\ldots,v_k}$ be the number of successful rounds. Observe that the event of success in round $i+1$ is $\mathcal F_{i+1}(v_1,\ldots,v_k)$-measurable and we already showed $\mathbb P\bigl(\text{Success in round $i+1$}\mid\mathcal F_i(v_1,\ldots,v_k)\bigr)\geq 1/2$. It follows that $X_{v_1,\ldots,v_k}$ stochastically dominates $\operatorname{Bin}(m,1/2)$. Recalling that $m=k\cdot p^2t/128$ and $p^2t=\omega(\log n)$, a standard binomial lower-tail estimate gives $\mathbb P\bigl(X_{v_1,\ldots,v_k}\leq k\log_2 n\bigr)\leq e^{-c_0k p^2t}=n^{-\omega(k)}$ for some absolute constant $c_0>0$. A union bound over all choices of $v_1,\ldots,v_k$ therefore shows that, for some absolute constant $c_1>0$, with probability at least $1-e^{-k\cdot c_1p^2t}$, we have $X_{v_1,\ldots,v_k}>k\log_2 n$ for all $v_1,\ldots,v_k\in V(G)$.

        Assume this event holds and suppose that $C_1,\ldots,C_k$ are distinct seeded components of $G_m$ which are joined by fewer than $kt/4$ internally vertex-disjoint paths of length at most $3$, each joining two distinct components among $C_1,\ldots,C_k$. By Lemma~\ref{lem: minimal vertex} applied to $G_m$ there exists, for every $j$, $v_j\in C_j\cap U_0$ such that for every $i\leq m-1$, either $C_i(v_j)\cap U_0=C_{i+1}(v_j)\cap U_0$ or $2|C_i(v_j)\cap U_0|\leq |C_{i+1}(v_j)\cap U_0|$. By our choice, all of $v_1,\ldots, v_k$ are in $U_0$. Also, since $C_i(v_j)\subseteq C_j$, these components are distinct for every $i$. Moreover, if at some earlier stage there were $kt/4$ internally vertex-disjoint paths of length at most $3$, each joining two distinct components among $C_i(v_1),\ldots,C_i(v_k)$, the same paths would join two distinct components among $C_1,\ldots,C_k$, contrary to our assumption. Hence, every successful round for $v_1,\ldots, v_k$ satisfies $C_i(v_j)\cap U_0\neq C_{i+1}(v_j)\cap U_0$ for some $j$. By the choice of $v_j$, $2|C_i(v_j)\cap U_0|\leq |C_{i+1}(v_j)\cap U_0|$ holds in such rounds. There are more than $k\log_2 n$ successful rounds, so for some specific $j$ there are more than $\log_2 n$ indices $i$ for which $2|C_i(v_j)\cap U_0|\leq |C_{i+1}(v_j)\cap U_0|$. It follows that $|C_j|>n$, a contradiction. This proves Claim~\ref{clm: tough sprinkling claim 1}.
    \end{proof}
    \begin{claim}\label{clm: tough sprinkling claim 2}
        Condition on an outcome of $U_0,\ldots, U_m$ satisfying Claim~\ref{clm: tough sprinkling claim 1}, and let $U_{m+1}$ be an independent $p/8$-random subset of $V(G)$. There exists an absolute constant $c_2>0$ such that, with probability at least $1-e^{-k\cdot c_2p^2t}$, $G_{m+1}$ has at most $k-1$ seeded components.
    \end{claim}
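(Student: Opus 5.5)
The plan mirrors the proof of Claim~\ref{clm: connected sprinkling claim 2}, with a union bound over $k$-tuples of seeded components of $G_m$ instead of pairs. The first step is a deterministic reduction. Every seeded component of $G_{m+1}$ contains a vertex of $U_0\subseteq W_m$. Hence it contains at least one seeded component of $G_m$, since $G_m\subseteq G_{m+1}$ and components can only merge. So if $G_{m+1}$ has at least $k$ seeded components, we can pick $k$ distinct seeded components $C_1,\ldots,C_k$ of $G_m$ lying in pairwise distinct components of $G_{m+1}$. It therefore suffices to show the following: for every fixed collection of $k$ distinct seeded components of $G_m$, the probability over $U_{m+1}$ that they lie in $k$ distinct components of $G_{m+1}$ is at most $e^{-k c p^2 t}$ for some absolute constant $c>0$. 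Everything is conditioned on $U_0,\ldots,U_m$, so the seeded components of $G_m$ are fixed sets.

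Fix such a collection $C_1,\ldots,C_k$. Since we are on the event of Claim~\ref{clm: tough sprinkling claim 1}, there are at least $kt/4$ internally vertex-disjoint paths of length at most $3$ in $G$. Each of them joins two distinct components among $C_1,\ldots,C_k$. Each path has at most two internal vertices, and each internal vertex lies in $W_{m+1}\supseteq U_{m+1}$ with probability at least $p/8$. This bound is only helped by vertices already in $W_m$, which are present with probability $1$. Hence each path lies entirely in $G_{m+1}$ with probability at least $p^2/64$. Because the paths are internally vertex-disjoint, these events are independent. If any path survives, two of the $C_j$ fall into the same component of $G_{m+1}$. So the bad event for this collection has probability at most
\[
\bigl(1-p^2/64\bigr)^{kt/4}\leq e^{-kp^2t/256}.
\]

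Finally, we take a union bound over all collections of $k$ seeded components of $G_m$. There are at most $\binom{n}{k}\leq e^{k\log n}$ of them. Since $p^2t=\omega(\log n)$, we have $k\log n\leq kp^2t/512$ for large $n$. The total failure probability is then at most $e^{-kp^2t/512}$, which gives the claim with $c_2=1/512$.

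There is no real obstacle. The only points needing care are the reduction from seeded components of $G_{m+1}$ to $k$ seeded components of $G_m$, and the monotonicity remark that internal vertices already in $W_m$ only increase the survival probability. The factor $k$ in the exponent, supplied by the $kt/4$ paths of Claim~\ref{clm: tough sprinkling claim 1}, is exactly what absorbs the $n^k$ union bound.
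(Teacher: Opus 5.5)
Your proposal is correct and follows the paper's argument. You fix $k$ seeded components of $G_m$, use the $kt/4$ internally disjoint short paths from Claim~\ref{clm: tough sprinkling claim 1} to bound the failure probability by $(1-p^2/64)^{kt/4}$, take a union bound over at most $n^k$ collections using $p^2t=\omega(\log n)$, and conclude because every seeded component of $G_{m+1}$ contains a seeded component of $G_m$; your explicit $c_2=1/512$ and the remark about internal vertices already in $W_m$ are harmless refinements of the paper's write-up.
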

    \begin{proof}
        Fix $k$ distinct seeded components $C_1,\ldots,C_k$ of $G_m$. By Claim~\ref{clm: tough sprinkling claim 1}, there are $kt/4$ internally vertex-disjoint paths of length at most $3$, each joining two distinct components among $C_1,\ldots,C_k$. Every internal vertex belongs to $U_{m+1}$ independently with probability $p/8$. Thus, each path is contained in $G_{m+1}$ with probability at least $p^2/64$. Since the paths are internally vertex-disjoint, these events are independent, and therefore the probability that none of them joins two components among $C_1,\ldots,C_k$ in $G_{m+1}$ is at most $(1-p^2/64)^{kt/4}\leq e^{-c_0kp^2t}$ for some absolute constant $c_0>0$. There are at most $n^k$ choices of $k$ distinct seeded components of $G_m$. Since $p^2 t=\omega(\log n)$, a union bound shows that, for some absolute constant $c_2>0$, with probability at least $1-e^{-k\cdot c_2p^2t}$, every such collection has two components which merge in $G_{m+1}$. If $G_{m+1}$ had $k$ distinct seeded components, choosing one seeded component of $G_m$ inside each of them would contradict this property. This proves Claim~\ref{clm: tough sprinkling claim 2}.
    \end{proof}
    It only remains to couple these sprinklings with $V_{p/2}$. Put $R=U_1\cup \ldots\cup U_{m+1}$. The set $R$ is $\rho$-random for some $\rho$ satisfying $\rho\leq mq+p/8=p/4$. Using that $p'\geq p/4$, we may therefore couple $R$ as a subset of $V_{p'}$. Combining Claims~\ref{clm: tough sprinkling claim 1} and~\ref{clm: tough sprinkling claim 2}, for some absolute constant $c>0$, with probability at least $1-e^{-ckp^2t}$, $G_{m+1}$ has at most $k-1$ components meeting $U_0=V_{p/4}$. Therefore, $G[V_{p/2}]$ also has at most $k-1$ components meeting $V_{p/4}$. Thus, choosing $\varepsilon>0$ sufficiently small, the probability that $G[V_{p/2}]$ has $k$ distinct components, each meeting $V_{p/4}$, is less than $2^{-4k\cdot \varepsilon p^2t}$.
\end{proof}

\section{Subsampling vertex-transitive graphs}
Here, we adapt the thinning--sprinkling technique to $d$-regular vertex-transitive graphs and prove Theorem~\ref{thm: vertex-transitive subsampling}. The proof needs several new ingredients. We show that vertex-transitive graphs admit an automorphism-invariant decomposition into pieces which retain almost all of the degree and have useful expansion properties. This will be used to prove that $d$-regular vertex-transitive graphs satisfy a certain form of $d$-toughness and, in turn, have many short connecting paths. We then feed this deterministic structure into the thinning--sprinkling argument.

\subsection{Weak regularity for vertex-transitive graphs}

Regularity lemmas typically assert that a large graph can be decomposed into pieces exhibiting a substantial degree of uniformity. The classical example is Szemer\'edi's regularity lemma \cite{szemeredi-regularity}, which in the dense setting produces a partition into a bounded number of parts for which most pairs of parts are quasirandom. In the sparse setting, and especially when one asks the decomposition to respect additional structure or symmetry, such strong quasirandomness cannot in general be expected. Nevertheless, weaker forms of regularity may still survive: one can sometimes decompose the graph into structured pieces which retain most of the original degree and satisfy useful expansion or connectivity properties. For a classical weak regularity paradigm, see Frieze and Kannan \cite{frieze-kannan}.

This philosophy has appeared in several forms for regular and vertex-transitive graphs. In the dense vertex-transitive setting, Christofides, Hladk\'y and M\'ath\'e obtained an automorphism-invariant decomposition into isomorphic vertex-transitive pieces with strong robust connectivity properties \cite{christofides-hladky-mathe}. Related robust-component decompositions for dense regular graphs were developed by K\"uhn, Lo, Osthus and Staden \cite{kuhn-lo-osthus-staden}. More recently, Bedert, Buci\'c, Kravitz, Montgomery and M\"uyesser proved a weak nonabelian arithmetic regularity lemma for Cayley graphs, decomposing them into isomorphic mildly quasirandom pieces while retaining almost all of the generating set \cite{bedert-bucic-kravitz-montgomery-muyesser}. This type of decomposition was subsequently used by Bedert, Dragani\'c, M\"uyesser and Pavez-Sign\'e in their work on Hamilton cycles in moderately dense Cayley graphs \cite{bedert-draganic-muyesser-pavez-signe}.

The next lemma gives a particularly simple degree-scale version of this principle for arbitrary vertex-transitive graphs, with no density assumption. Given a vertex-transitive graph $G$, we say that
$B_1\sqcup\ldots\sqcup B_\ell=V(G)$ is a partition into \emph{blocks}
if every automorphism of $G$ permutes $B_1,\ldots,B_\ell$. Notice that each $G[B_i]$ is itself vertex-transitive: if $u,v\in B_i$, then an automorphism sending $u$ to $v$ must map $B_i$ to the block containing $v$, and hence to $B_i$.

\begin{lemma}\label{lem: weak regularity}
    Let $0<\varepsilon\leq1/4$ and let $G$ be a $d$-regular
    vertex-transitive graph. Then there exists a partition
    $B_1,\ldots,B_\ell$ into blocks such that every $G[B_i]$ is regular
    of degree at least $(1-\varepsilon)d$, and 
    $e(U,B_i\setminus U)
        \geq \min\{d|U|/4,\varepsilon d^2/10\}$
    for every
    $U\subseteq B_i$ with $|U|\leq |B_i|/2$.
\end{lemma}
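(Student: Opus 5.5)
The plan is to build the partition by iterative, automorphism-invariant refinement, using Mader-type atoms to make each split invariant. Start from the trivial partition $\{V(G)\}$. Suppose some current block $B$ (with $G[B]$ vertex-transitive and $d'$-regular, $d'\geq(1-\varepsilon)d$) has a \emph{violating} set, that is, $U\subseteq B$ with $|U|\leq|B|/2$ and $e(U,B\setminus U)<\min\{d|U|/4,\varepsilon d^2/10\}$. In that case we replace $B$ by the orbit of a suitable atom; otherwise we stop.

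First I would record what violating sets look like. Every vertex of $U$ has at least $3d/4$ neighbours in $B$, at most $|U|-1$ of them in $U$. If $|U|\leq d/2$ this gives $e(U,B\setminus U)\geq |U|d/4$, which contradicts violation. Hence violating sets have $|U|>d/2$. For such sets $d|U|/4>d^2/8>\varepsilon d^2/10$, so the condition reduces to the single threshold $e(U,B\setminus U)<\varepsilon d^2/10$.

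Let $\mathcal S$ be the family of $U\subseteq B$ with $d/2<|U|\leq |B|/2$. Let $\lambda$ be the minimum of $e(U,B\setminus U)$ over $\mathcal S$, and call a set in $\mathcal S$ attaining $\lambda$ of minimum cardinality an \emph{atom}. I want to show that two distinct atoms $A,A'$ are disjoint. Suppose they intersect. Edge-boundary submodularity gives
\[
\partial(A\cap A')+\partial(A\cup A')\leq 2\lambda
\qquad\text{and}\qquad
\partial(A\setminus A')+\partial(A'\setminus A)\leq 2\lambda .
\]
To close the argument I need one of the two pairs to lie in $\mathcal S$ (up to complementation within $B$). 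Whenever a set in these pairs has size at most $d/2$, the computation above shows its boundary is at least $\min\{d',|X|d/4\}$. I would try to show this exceeds $\lambda$, so that such a set cannot occur, and otherwise reach a contradiction with the minimality of $|A|$. \textbf{This uncrossing with a lower size constraint is the step I expect to be the main obstacle.} Small sets such as single vertices have boundary about $d$, which may be below $\varepsilon d^2/10$, so the size restriction is genuinely needed. I would first try to derive $\lambda<\varepsilon d^2/10\leq d^2/40$ from the existence of a violating set. Then a set $X$ of size at most $d/2$ with boundary below $d^2/40$ forces $|X|<d/10$, and one still has to rule out its appearance using the structure of $A\cap A'$. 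If this fails, I would pass to the vertex-weighted version of Mader's atom argument, or choose atoms minimizing $e(U,B\setminus U)-\varepsilon d^2|U|/(10|B|)$.

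Granting disjointness, transitivity makes the images of an atom under $\mathrm{Aut}(G[B])$ cover $B$. Since distinct images are disjoint, they partition $B$. I then need this partition to be permuted by every automorphism of $G$. This holds because an automorphism of $G$ maps $B$ to another block $B'$ and carries atoms of $B$ to atoms of $B'$. Applying this refinement simultaneously to all blocks, which are isomorphic since $G$ is transitive, keeps a partition into blocks, and each new block induces a vertex-transitive, hence regular, subgraph.

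For the degree accounting, the new block $A$ is vertex-transitive, so each of its vertices loses exactly $\lambda/|A|<\varepsilon d^2/(10|A|)$ edges. Block sizes at least halve at every refinement and always exceed $d/2$, as long as degrees remain at least $3d/4$, which holds inductively. The total loss per vertex is therefore bounded by a geometric series dominated by its last term:
\[
\sum_j \frac{\varepsilon d^2}{10\,s_j}\leq \frac{2\varepsilon d^2}{10\cdot d/2}=\frac{2\varepsilon d}{5}\leq \varepsilon d .
\]
Thus every block keeps degree at least $(1-\varepsilon)d$. The process terminates because block sizes strictly decrease, and at termination no block has a violating set, which is exactly the expansion conclusion of Lemma~\ref{lem: weak regularity}.
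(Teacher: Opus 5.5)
Your refinement-by-atoms route differs from the paper's, and its outline is sound. As written, though, there is a gap at exactly the step you flag. Disjointness of distinct intersecting atoms is only granted, and everything downstream depends on it: the atom orbit partitioning $B$, invariance of the refined partition, and vertex-transitivity of the new blocks.

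The missing idea is that atoms are large; once you have that, your own uncrossing closes. Write $\partial X=e(X,B\setminus X)$. Note that $\lambda<\varepsilon d^2/10\le d^2/40$ is immediate, since a violating set lies in $\mathcal S$.
(i) For nonempty $X$ with $|X|\le d/2$, we have $\partial X>|X|d/4$.
(ii) Every atom satisfies $|A|>3d/5$. Otherwise each vertex of $A$ has more than $3d/4-3d/5=3d/20$ neighbours in $B\setminus A$, giving $\partial A>3d^2/40>\lambda$.

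Now let $A\neq A'$ be intersecting atoms and put $a=|A\setminus A'|=|A'\setminus A|$, so $1\le a<|A|$.
If $a>d/2$, both differences lie in $\mathcal S$ and are smaller than $A$. Each therefore has boundary $>\lambda$, contradicting your second (posimodular) inequality.
If $a\le d/2$, that inequality together with (i) gives $ad/2<2\lambda<d^2/20$, so $a<d/10$. By (ii), $|A\cap A'|>d/2$. Thus $\partial(A\cap A')>\lambda$, and submodularity gives $\partial(A\cup A')<\lambda$.
Since $|A\cup A'|>d/2$, minimality of $\lambda$ forces $|A\cup A'|>|B|/2$. Put $Y=B\setminus(A\cup A')$, which is nonempty because $|A\cup A'|<2|A|\le|B|$. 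Minimality of $\lambda$ again forces $|Y|\le d/2$, and then (i) gives $|Y|<d/10$. Using $|A|\le|B|/2$, this yields $|B|\le 2(a+|Y|)<2d/5$, contradicting $|B|\ge 2|A|>d$.
So no weighted atoms or modified objective are needed.

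With this filled in, the rest of your argument checks out:
\begin{itemize}
\item the reduction to the single threshold $\varepsilon d^2/10$ for $|U|>d/2$;
\item canonicity of the set of atoms: disjointness plus covering shows all atoms form one orbit, so automorphisms of $G$ carry the refinement of $B$ to that of its image;
\item the geometric series bounding the total degree loss by $2\varepsilon d/5$, which also maintains $d'\ge 3d/4$ for the next round.
\end{itemize}

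The paper instead builds the blocks in one shot. It extracts a single $(d/10,\varepsilon d)$-expander $H$ with average degree at least $(1-\varepsilon)d$ and minimum degree above $d/2$ (Proposition~\ref{prop: sublinear expander}). The blocks are the components of the orbit graph of $E(H)$. Expansion of large sets comes from finding a cut edge whose endpoints have small cut-degree, via Watkins' bound that connected vertex-transitive graphs are $2(d'+1)/3$-connected, and then applying the expansion of the automorphic copy of $H$ through that edge.

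Your Mader-style argument is more self-contained: it uses only submodularity of the cut function, needs no external connectivity theorem, and gets expansion for free from the stopping rule. The cost is an iterative construction plus the uncrossing above. The paper's route is non-iterative, and its blocks are covered by automorphic copies of one fixed expander.
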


To prove the lemma, we first show how to extract from an arbitrary graph a dense expanding subgraph.

\begin{definition}
    A graph $H$ is a $(t,s)$-expander if for every $U\subseteq V(H)$ with
    $|U|\leq |V(H)|/2$ it holds that
    $e_H(U,V(H) \setminus U)\geq \min\{t|U|,ts\}$.
\end{definition}

\noindent
The parameter $t$ is the expansion rate and $s$ is the scale up to which this rate is required. Sets of size at most $s$ have edge-boundary at least $t|U|$, while for larger sets we only require the boundary to be of size at least $ts$. Thus, unlike the usual notion of an expander, we require linear expansion only for relatively small sets, while retaining a nontrivial lower bound on the boundary of all larger sets.

\begin{proposition}\label{prop: sublinear expander}
    Let $G$ be a graph with average degree $d>0$. For every
    $0<\varepsilon\leq1/4$, $G$ contains a
    $(d/10,\varepsilon d)$-expander $H$ with average degree at least
    $(1-\varepsilon)d$ and minimum degree greater than $d/2$.
\end{proposition}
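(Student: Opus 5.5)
We prove the proposition by an iterative cutting procedure driven by a potential function. Start with $H_0=G$. At each step, if the current graph $H$ fails to be a $(d/10,\varepsilon d)$-expander, there is a set $U$ with $|U|\le |V(H)|/2$ and $e_H(U,V(H)\setminus U)<\min\{d|U|/10,\varepsilon d^2/10\}$. We delete all edges of this sparse cut and pass to whichever side retains the better density, as measured by the potential below. Low-degree vertices are handled by a separate step, described next.

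To make the process terminate with average degree close to $d$, we track a potential rather than the average degree itself. A natural choice is
\[
\Phi(H)=e(H)-\tfrac{(1-\varepsilon)d}{2}|V(H)|,
\]
which starts at $\Phi(G)=\varepsilon d n/2$. The procedure alternates two moves.
\begin{enumerate}[label=(\roman*)]
\item If some vertex has degree at most $d/2$, delete it. This changes $\Phi$ by at least $-d/2+(1-\varepsilon)d/2\ge 0$ when $\varepsilon\le 1/4$. More precisely, the change is $\ge (1/2-\varepsilon)d/2\cdot(-1)$; I need the sign right, and I will refine by using threshold $(1-\varepsilon)d/2$ so that the change is nonnegative. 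Either way, $\Phi$ does not decrease, and this is what gives minimum degree greater than $d/2$ after adjusting constants.
\item If there is a sparse cut $(U,\bar U)$, split along it. We have $e(H)=e(H[U])+e(H[\bar U])+e(U,\bar U)$, so $\Phi(H[U])+\Phi(H[\bar U])=\Phi(H)-e(U,\bar U)$. Consequently one side $X$ satisfies $\Phi(H[X])\ge \Phi(H)\cdot\frac{|X|}{|V(H)|}$-type bounds minus its share of the cut.
\end{enumerate}

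The main obstacle is controlling the cut losses so that the potential stays positive. Positivity is exactly what we need: $\Phi>0$ gives average degree at least $(1-\varepsilon)d$, and it also ensures the graph is nonempty. The most natural device is a normalized potential $\phi(H)=\Phi(H)/|V(H)|$, together with the rule of passing to the side with the larger $\phi$. The losses then split into two regimes.
\begin{enumerate}[label=(\alph*)]
\item \emph{Small sets} ($|U|\le\varepsilon d$). Here the cut has fewer than $d|U|/10$ edges. With minimum degree above $d/2$, the set $U$ has at least $d|U|/2$ degree sum, so $H[U]$ has at least $0.2d|U|$ edges, i.e. average degree at least about $0.4d$. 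We have $|U|\le \varepsilon d$, but we need density near $(1-\varepsilon)d$, so the comparison requires care; this is the delicate point. The alternative I would try first is to keep the larger side $\bar U$ and charge the loss: the cut costs at most $d|U|/10$, while removing $U$ frees $(1-\varepsilon)d|U|/2$ in the vertex term. The net change in $\Phi$ is therefore at least $(1-\varepsilon)d|U|/2-e(H[U])-d|U|/10$. This is bad if $H[U]$ is dense, in which case we instead keep $U$.
\item \emph{Large sets}. Here the cut has fewer than $\varepsilon d^2/10$ edges. Since both sides then have size greater than $\varepsilon d$, it suffices to bound the total cut loss along the whole procedure by $\varepsilon d\,|V_{\text{final}}|/2$.
\end{enumerate}

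Cleanly, I would use the standard trick of choosing $H$ to be a minimal subgraph maximizing $\Phi_\alpha(H)=e(H)-\alpha|V(H)|$ over induced subgraphs, with $\alpha$ slightly below $(1-\varepsilon)d/2$. Maximality immediately gives minimum degree greater than $\alpha>d/2\cdot(1-\varepsilon)$. It also gives $e(H[U])\le\alpha|U|$ for every proper part, which fails to certify expansion directly, so I would combine it with a second term, for instance $\Phi=e(H)-\alpha|V|-\beta\cdot(\text{something rewarding large pieces})$.

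Concretely, I would maximize $f(H)=e(H)-\frac{(1-\varepsilon)d}{2}|V(H)|-\frac{d}{10}\min\{|V(H)|,\dots\}$-style penalties, or run the splitting process and bound the total number of cut edges removed. For small cuts, each vertex pays at most $d/10$ over the whole process, because it lies on the small side at most $\log n$ times, and that would lose a log. This indicates the charging must be done via the side-choice rule. I expect this bookkeeping, namely making the cut charges sum to at most $\varepsilon d n/2$ without a logarithmic loss, to be the crux, and I would follow the Komlós–Szemerédi-style "choose a subgraph maximizing a weighted density functional" proof, adapted to edge boundaries.
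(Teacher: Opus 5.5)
\textbf{There is a genuine gap: your proposal does not yet contain a proof.} The step you yourself call the crux, paying for sparse cuts without losing density, is left open. The tools you set up cannot close it.

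In move (i), deleting a vertex of degree up to $d/2$ changes $\Phi=e(H)-\frac{(1-\varepsilon)d}{2}|V(H)|$ by $\frac{(1-\varepsilon)d}{2}-d_H(v)\geq-\varepsilon d/2$, and this can be negative. Lowering the deletion threshold to $(1-\varepsilon)d/2$ restores monotonicity, but then you only get minimum degree greater than $(1-\varepsilon)d/2$, whereas the statement requires more than $d/2$. To get more than $d/2$, the coefficient of $|V(H)|$ must be $d/2$. But then $\Phi(G)=0$, and nothing positive is left to pay for cuts.

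Your fallback, a minimal maximizer of $e(H)-\alpha|V(H)|$, fails for the reason you sensed. From $\Phi(H)=\Phi(H[U])+\Phi(H[W])+e_H(U,W)$ and $\Phi(H[U]),\Phi(H[W])\leq\Phi(H)$, you only get $e_H(U,W)\geq-\Phi(H)$, which is vacuous. More generally, any criterion of the form $\Phi\geq 0$, with slack proportional to $|V|$, gives nothing. If both sides fail it, you only learn $e_H(U,W)>\Phi(H)\geq 0$.

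The missing idea is a negative threshold that does not scale with $|V|$, combined with an extremal choice instead of an iterative process. The paper takes $V$ of minimum size subject to $|V|\geq 2d/5$ and $2e(G[V])\geq d|V|-2\varepsilon d^2/5$, and sets $H=G[V]$. Then everything follows directly:
\begin{itemize}
\item The condition $|V|\geq 2d/5$ turns the additive slack into average degree at least $d-\varepsilon d$.
\item Comparing $V$ with $V\setminus\{v\}$ gives $d_H(v)>d/2$. This works because the density threshold is the full $d$, not $(1-\varepsilon)d$.
\item Sets with $|U|<2d/5$ expand for free, since $e_H(U,W)>(d/2-|U|)|U|>d|U|/10$. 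So your regime (a) is not delicate at all once the minimum degree exceeds $d/2$.
\item If $2d/5\leq|U|\leq|W|$, both $U$ and $W$ violate the constraint by minimality. Hence $2e(H[U])+2e(H[W])<d|V|-4\varepsilon d^2/5$. Subtracting this from $2e(H)\geq d|V|-2\varepsilon d^2/5$ gives $e_H(U,W)>\varepsilon d^2/5$.
\end{itemize}
The constant budget is charged twice when $H$ splits into two large pieces. That double charge is exactly the gain your proportional potential cannot produce. Since there is no iteration, there is also no logarithmic loss to control.
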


\begin{proof}
Choose $V\subseteq V(G)$ of minimum size subject to $|V|\geq2d/5$ and
\[
    2e(G[V])\geq d|V|-\frac{2\varepsilon d^2}{5}.
\]
Such a set exists since $V(G)$ satisfies both conditions. Set $H=G[V]$. Then $d(H)\geq(1-\varepsilon)d\geq3d/4$. Fix $v\in V$. Minimality gives
\[
    2e(H)-2d_H(v)<(|V|-1)d-\frac{2\varepsilon d^2}{5}.
\]
Comparing this with the inequality defining $H$ yields $d_H(v)>d/2$.

Now let $\varnothing\neq U\subseteq V$ with $|U|\leq |V|/2$, and put
$W=V\setminus U$. If $|U|<2d/5$, the minimum-degree bound gives
$e_H(U,W)>(d/2-|U|)|U|>d|U|/10$.
Otherwise,  $2d/5 \leq |U| \leq |W|$, so minimality implies
\[
    2e(H[U])+2e(H[W])
    <\left(d|U|-\frac{2\varepsilon d^2}{5}\right)
      +\left(d|W|-\frac{2\varepsilon d^2}{5}\right)\\
    =d|V|-\frac{4\varepsilon d^2}{5}.
\]
Since $2e(H)\geq d|V|-2\varepsilon d^2/5$, this gives
$e_H(U,W)>\varepsilon d^2/5$. In either case, the required expansion
bound follows.
\end{proof}
\begin{proof}[Proof of Lemma~\ref{lem: weak regularity}]
    Let $H\subseteq G$ be the $(d/10,\varepsilon d)$-expander given by
    Proposition~\ref{prop: sublinear expander}. Let $\Gamma$ be the
    spanning subgraph of $G$ whose edge set is the orbit of $E(H)$ under the automorphisms of $G$. Since $\Gamma$ is invariant under every
    automorphism of $G$, it is $d'$-regular for some $d'$. Moreover,
    $\Gamma$ contains $H$, so $d'\geq d(H)\geq(1-\varepsilon)d$. The
    connected components $B_1,\ldots,B_\ell$ of $\Gamma$ form a partition
    into blocks. Since $G[B_i]$ contains $\Gamma[B_i]$ and is vertex-transitive, it is regular of degree at least $d'$.

    Fix a block $B$ and $U\subseteq B$ with $|U|\leq |B|/2$. If
    $|U|\leq d/2$, then
    $e_\Gamma(U,B\setminus U)\geq (d'-|U|)|U|\geq d|U|/4$, as required.
    Suppose therefore that $|U|>d/2$.

    Let $X$ be the set of vertices of $B$ having at least $d/10$
    neighbors across the cut $(U,B\setminus U)$ in $\Gamma$. If
    $|X|\geq d/2$, then there are at least $d^2/40$ edges across the
    cut, which is enough. Hence, let us assume that
    $|X|<d/2$. Next, we use the classical fact that a connected $d'$-regular vertex-transitive graph is at least $2(d'+1)/3$-vertex-connected \cite{watkins-connectivity}. Since $d'\geq3d/4$ and $\Gamma[B]$ is connected, it is more than $d/2$-connected. Thus not every edge across the cut can
    be incident with $X$.

    Choose an edge $uv$ across the cut with $u,v\notin X$. Since
    $uv\in E(\Gamma)$, it lies in an automorphic copy $H'$ of $H$.
    The graph $H'$ is connected and hence lies inside $\Gamma[B]$.
    Both $u$ and $v$ have degree greater than $d/2$ in $H'$, while
    each has fewer than $d/10$ neighbors across the cut. It follows
    that each side of the cut contains more than $2d/5$ vertices of
    $H'$. Applying the expansion property of $H'$ to the smaller side
    gives at least $\varepsilon d^2/10$ edges across the cut.
\end{proof}

\subsection{Toughness of vertex-transitive graphs}

As mentioned in the introduction, every connected vertex-transitive graph is $1$-tough.
The weak regularity lemma above, however, allows us to show the following substantially
stronger form of toughness, which will be crucial for our purposes.

\begin{lemma}\label{lem: vertex-transitive toughness}
    Let $G$ be a connected $d$-regular vertex-transitive graph and let
    $S\subseteq V(G)$ be such that $G-S$ is disconnected. Denote the
    components of $G-S$ by $X_1,\ldots,X_h$. Then
    \[
        |S|\geq\frac{1}{125}\sum_{i=1}^h\min\{|X_i|,d\}.
    \]
\end{lemma}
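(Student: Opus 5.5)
The plan is to combine an edge count across $S$ with Lemma~\ref{lem: weak regularity} and the connectivity bound of Watkins. Apply Lemma~\ref{lem: weak regularity} with a fixed $\varepsilon$ (say $\varepsilon=1/4$) to get blocks $B_1,\dots,B_\ell$. Each $G[B_j]$ is regular of degree at least $3d/4$ and satisfies $e(U,B_j\setminus U)\geq\min\{d|U|/4,d^2/160\}$ for $|U|\leq|B_j|/2$. There are no edges between distinct $X_i$, so every edge of $G[B_j]$ leaving $X_i\cap B_j$ ends in $S\cap B_j$. Every vertex of $S$ has degree $d$. The basic inequality I want is therefore
\[
d|S|\;\geq\;\sum_{j}\sum_{i}e_{G[B_j]}\bigl(X_i\cap B_j,\,S\cap B_j\bigr),
\]
possibly with a harmless factor $2$ depending on how edges are attributed. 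The remaining task is to lower bound the right-hand side by roughly $\frac{d}{160}\sum_i\min\{|X_i|,d\}$, minus terms paid for separately.

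First I handle the \emph{minority parts}. Fix a block $B$. At most one component, call it $X_B$, satisfies $|X_B\cap B|>|B|/2$. For every other $i$ the expansion gives at least $\min\{d|X_i\cap B|/4,d^2/160\}$ edges from $X_i\cap B$ into $S\cap B$. Summing over blocks and using $\sum_B\min\{a_B d/4,d^2/160\}\geq\frac{d}{160}\min\{\sum_B a_B,d\}$, each component $X_i$ is charged $\gtrsim d\min\{|X_i^{\rm min}|,d\}$. Here $X_i^{\rm min}$ is the part of $X_i$ lying in blocks where $X_i$ is not the majority.

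Second, I pay for one component directly. Since $G$ is connected and vertex-transitive, the result of Watkins quoted in the proof of Lemma~\ref{lem: weak regularity} gives $|S|\geq 2(d+1)/3\geq\frac{2}{3}\min\{|X_1|,d\}$ for any single component. So one component's term, for instance the one with the largest $\min\{|X_i|,d\}$, is free.

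The main obstacle is the remaining components that are the majority in some block. If $X_i$ is the majority in $B$, the set $U=B\setminus X_i$ has size at most $|B|/2$. I would apply expansion to $U$ instead: every edge from $U$ to $X_i\cap B$ is incident to $S\cap U$. Distinct majority components own distinct blocks, so these edge sets are disjoint. If $|U|\geq d/40$, this yields $d^2/160$ edges into $S$, paying the full $\min\{|X_i|,d\}\leq d$. If $|U|<d/40$, then $U$ cannot contain any vertex of another component: such a vertex has at least $3d/4$ neighbours in $B$, none of which lie in $X_i$, so all would lie in $U$. Hence $U\subseteq S$, and $X_i$ swallows all of $B$ except a few vertices of $S$. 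This degenerate case, including $B\subseteq X_i$, is where I expect the real difficulty. There I would use vertex-transitivity across blocks: automorphisms permute blocks, so $X_i$ must exit $B$ through $G$-edges between blocks. I would then try to charge $X_i$ to the $d-\deg_{G[B]}$ outside edges per vertex, or to the minority parts of $X_i$ in neighbouring blocks. Failing that, I would use a Watkins-type argument on $N(X_i)\subseteq S$ together with a double count of how many such near-block components a single vertex of $S$ can border. The constant $125$ should come out of balancing these cases.
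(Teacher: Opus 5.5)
Your handling of the minority parts and of block-majority components with $|B\setminus X_i|\geq d/40$ is fine. Your reduction of the remaining case to $B\setminus S\subseteq X_i$ with $|B\cap S|<d/40$ is also correct. But this remaining case is the heart of the lemma, and the proposal leaves it open. What you must show is that if $a$ distinct components each swallow (almost all of) some block, then $|S|=\Omega(d(a-1))$, and neither suggested route gives this. Charging $X_i$ to its inter-block edges does not work as stated: if $X_i$ swallows several blocks, most of those edges may lead back into $X_i$. You would then need a lower bound on the edge boundary of an arbitrary union of blocks, which is itself a global statement. Applying Watkins' bound $|N(X_i)|\geq 2(d+1)/3$ to each such component and double counting only yields $d|S|\geq\sum_i|N(X_i)|\geq 2a(d+1)/3$, i.e.\ $|S|=\Omega(a)$ rather than $\Omega(ad)$. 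Nothing in this count prevents one vertex of $S$ from bordering many of these components. The missing factor $d$ has to come from a global argument about how the blocks are linked.

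The paper obtains it with a random automorphism. In each block it keeps only the largest intersection $X_i\cap B$ and moves all other intersections into $S'=S\cup D$. Expansion bounds both $\sum\min\{|U|,d\}\leq 40|S|$ over the discarded $U$ and $|D|\leq (20b/d)|S|$, the latter from $e(U,S\cap B)\geq d^2|U|/(20b)$ for $|U|\leq b/2$. It then fixes a spanning tree of the graph on blocks, represented by a set $F$ of $\ell-1$ edges of $G$. For every automorphism $\varphi$, $\varphi(F)$ again spans the blocks. Since each block now retains vertices of at most one component, at least $a-1$ edges of $\varphi(F)$ meet $S'$, where $a$ is the number of components that retain vertices. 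For uniformly random $\varphi$, each edge of $F$ meets $S'$ with probability at most $2|S'|/(\ell b)$, so $a-1\leq 2|S'|/b$. Hence $ad\leq d+2d(|S|+|D|)/b=O(|S|)$, using $b\geq 3d/4$ and $|S|\geq d/2$. Without this step, or an equivalent global argument, your proposal does not establish the lemma.
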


\begin{proof}
    Apply the weak regularity lemma with $\varepsilon=1/4$, and let
    $B_1,\ldots,B_\ell$ be the resulting blocks. All blocks have the same
    size, say $b$, and every $G[B_j]$ has degree at least $3d/4$; in
    particular, $b\geq3d/4$.
    In each block $B$, consider the largest non-empty intersection $X_i\cap B$,
    if one exists, keep this set and discard the others. Let $D$ be the set of discarded
    vertices from all the $X_i$, put $S'=S\cup D$, and let $a$ be the number of original components retaining any vertices.

     By maximality, every discarded intersection $U=X_i\cap B$ has $|U|\leq b/2$, and
    all its edges to $B\setminus U$ go to $S\cap B$, since every vertex of
    $B\setminus(U\cup S)$ lies in a component of $G-S$ different from $X_i$.
    By weak regularity,
    $e(U,S\cap B)\geq\min\{d|U|/10,d^2/40\}$, which is at least both
    $(d/40)\cdot \min\{|U|,d\}$ and $d^2|U|/(20b)$, where we use that $|U|\leq b/2$. Summing over the
    discarded intersections counts at most $d|S|$ edges, so
    \[
        A:=\sum_{U\text{ discarded}}\min\{|U|,d\}\leq40|S|,
        \qquad |D|\leq\frac{20b}{d}|S|.
    \]

Choose a spanning tree in the graph whose vertices are the blocks,
    with two blocks adjacent whenever an edge of $G$ joins them. This
    graph is connected because $G$ is connected. Represent each tree
    edge by one such edge of $G$, obtaining a set $F$ of $\ell-1$ edges.

    For now, let $\varphi$ be an arbitrary automorphism of $G$. Then, $\varphi$ permutes the blocks, so $\varphi(F)$
    again represents a spanning tree on the blocks. Delete the tree
    edges whose representatives in $G$ meet $S'$. Each remaining representative
    has retained endpoints belonging to the same $X_i$, since distinct
    components of $G-S$ have no edges between them. As each block retains
    vertices from at most one $X_i$, blocks retaining vertices from
    different $X_i$ lie in different components of the remaining forest.
    There are $a$ such $X_i$, so the forest has at least $a$ components.
    Hence at least $a-1$ edges of $\varphi(F)$ meet $S'$.

    Now, choose $\varphi$ uniformly at random. By vertex transitivity,
    each endpoint of a fixed edge of $F$ is mapped uniformly over the
    $\ell b$ vertices of $G$. The probability that the image of this edge
    meets $S'$ is therefore at most $2|S'|/(\ell b)$. Summing over the
    $\ell-1$ edges of $F$, the expected number of edges of $\varphi(F)$
    meeting $S'$ is at most $2(\ell-1)|S'|/(\ell b)\leq2|S'|/b$.
    Since this number is always at least $a-1$, we obtain
    $a-1\leq2|S'|/b$.

    The original components retaining vertices contribute at most $ad$
    to $\sum_i\min\{|X_i|,d\}$, while all others contribute at most $A$,
    since each is a union of discarded intersections. Since $S$
    disconnects $G$, the vertex-connectivity bound gives $|S|\geq d/2$.
    Using $b\geq3d/4$ and the bounds above, we conclude
    \[
        \begin{aligned}
        \sum_{i=1}^h\min\{|X_i|,d\}
        &\leq A+ad
        \leq40|S|+d+\frac{2d}{b}(|S|+|D|)\\
        &\leq\left(80+2+\frac83+40\right)|S|<125|S|,
        \end{aligned}
    \]
    completing the proof.
\end{proof}

The following connecting lemma, needed for sprinkling, is a consequence
of the toughness property of vertex-transitive graphs.

\begin{lemma}\label{lem: vertex-transitive connectors}
    Let $G$ be a connected $d$-regular vertex-transitive graph, let
    $0<r\leq d$, and let $C_1,\ldots,C_k$ be pairwise disjoint connected
    vertex sets with no edges between distinct $C_i$, where $k\geq2$ and
    $|C_i|\geq r$ for every $i$. Then there exist two collections
    $\mathcal P_1,\mathcal P_2$ of paths which intersect only on
    $\bigcup_iC_i$ with the following properties:
    \begin{itemize}
        \item every path in $\mathcal P_1$ has length $2$ and joins two
        distinct sets among $C_1,\ldots,C_k$;
        \item every path in $\mathcal P_2$ either has length $3$ and joins
        two distinct sets among $C_1,\ldots,C_k$, or has length $2$ with
        one endpoint in some $C_i$ and the other endpoint outside
        $\bigcup_i(C_i\cup N_G(C_i))$.
    \end{itemize}
    Writing $N_j=|\mathcal P_j|$, we have
    $\frac{d}{r}N_1+N_2\geq\frac{dk}{300}$.
\end{lemma}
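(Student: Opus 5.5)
We argue by maximality, exactly as in the proof of Lemma~\ref{lem: menger for tough}, with Lemma~\ref{lem: vertex-transitive toughness} playing the role of toughness. Take collections $\mathcal P_1,\mathcal P_2$ of paths of the required types, intersecting only on $C:=\bigcup_iC_i$, and maximal in the following sense: no further path of either type can be added while keeping internal vertices disjoint. Let $W$ be the set of internal vertices of all these paths. Then $|W|=N_1+2N_2'+N_2''$, where $N_2'$ and $N_2''$ count the length-$3$ and length-$2$ paths of $\mathcal P_2$. Hence $|W|\leq N_1+2N_2$. If $N_2\geq dk/300$ we are done, so we assume $N_2<dk/300$.

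\textbf{Structure of $G-W$.} By maximality, in $G-W$ there is no path of length $2$ or $3$ joining distinct $C_i$. There is also no path $axy$ with $a\in C$ and $y\notin\bigcup_i(C_i\cup N_G(C_i))$. Since there are no edges between distinct $C_i$, the component of $G-W$ containing $C_i$ is contained in $C_i\cup N_G(C_i)$, and these components are distinct for different $i$. Indeed, a shortest path in $G-W$ between different $C_i$ has length at least $4$, and its first three vertices would give a path of the second type, a contradiction. So $G-W$ has at least $k$ components $X_1,\ldots,X_k$ with $X_i\supseteq C_i$, and $|X_i|\geq r$.

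\textbf{Applying toughness.} We now apply Lemma~\ref{lem: vertex-transitive toughness} with $S=W$; note $G-W$ is disconnected since $k\geq2$. This gives
\[
|W|\geq\frac{1}{125}\sum_i\min\{|X_i|,d\}\geq\frac{kr}{125}.
\]
This bound alone is too weak, because $N_1$ enters $|W|$ only with weight $1$, whereas the target weights it by $d/r$.

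\textbf{The main obstacle: bounding $|X_i|$ by $N_1$.} We need $\min\{|X_i|,d\}$ to be essentially controlled by $N_1$ and $N_2$. The vertices of $X_i\setminus C_i$ lie in $N_G(C_i)$. The idea is that when the $\mathcal P_1$-paths are few, the vertices of $N_G(C_i)$ adjacent to several $C_j$ are mostly used up, so the remaining components are small. I would refine the choice: first take $\mathcal P_1$ maximal, then $\mathcal P_2$ maximal. Then every vertex adjacent to two distinct $C_j$ is an internal vertex of some path in $\mathcal P_1$, since otherwise it would itself form a new $\mathcal P_1$-path. So $|W_1|=N_1$, where $W_1$ is the set of such vertices.

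Now compare with $r$. Each $C_i$ has size at least $r$, and a connected set of size $r$ in a $d$-regular graph has $e(C_i,V\setminus C_i)$ large. Rather than use that directly, I would apply Lemma~\ref{lem: vertex-transitive toughness} to $S=W$ and split the sum according to whether $|X_i|\geq d$. Write $a$ for the number of $i$ with $|X_i|\geq d$. If $a\geq k/2$, then $|W|\geq dk/250$, so $N_1+2N_2\geq dk/250$. Since $d/r\geq1$, this yields $\frac dr N_1+N_2\geq dk/500$.

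Otherwise at least $k/2$ of the components satisfy $r\leq|X_i|<d$, and $|W|\geq kr/250$. In that case I would rescale: consider only the $\mathcal P_1$ contribution and argue that $|W_1|\cdot d/r$ dominates. The hope is to use that $\min\{|X_i|,d\}\cdot(d/r)\geq d$ whenever $|X_i|\geq r$. This would follow by applying the toughness lemma on a scale-adjusted version, or by directly using the edge bound $e(U,B\setminus U)\geq\min\{d|U|/4,\ldots\}$ from Lemma~\ref{lem: weak regularity}, which gives each small component about $d|X_i|/4\geq dr/4$ boundary edges into $W$. Vertices of $W_2$ are adjacent to at most one component, while vertices of $W_1$ carry at most $d$ boundary edges each. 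So $dN_1+2dN_2\gtrsim kdr$, i.e.\ $\frac dr N_1+\frac{2d}{r}N_2\gtrsim dk$.

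The remaining issue is the factor $d/r$ in front of $N_2$. Each vertex of $W_2$ can absorb up to $d$ boundary edges, which is fine, but then only $N_2\gtrsim kr$ follows, not $kd$. Resolving this mismatch between the two counting scales is the step I expect to be hardest. I would try to handle it by noting that a $\mathcal P_2$ endpoint $y$ outside all neighbourhoods cannot be used for boundary edges of small components. Consequently the boundary of the small $X_i$ lands either in $W_1$ or in middle vertices of $\mathcal P_2$, each of which lies in a single $N_G(C_i)$. I would then combine this with the weak-regularity expansion up to scale $d^2$, and tune the constants to reach the stated $1/300$.
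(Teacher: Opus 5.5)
Your setup matches the paper's. Choosing $\mathcal P_1$ maximal first makes its middle vertices exactly the set $X$ of vertices outside $\bigcup_iC_i$ with neighbours in two distinct $C_j$. A maximal $\mathcal P_2$ in $G-X$ then makes $Q_i=C_i\cup(N_G(C_i)\setminus S)$ a component of $G-S$, where $S=X\cup Z$ and $Z$ is the set of vertices of $\mathcal P_2$ outside $\bigcup_iC_i$. (Your $W$ must include the far endpoints $y$ of the length-$2$ escape paths, otherwise maximality does not forbid new paths through them; then $|S|=N_1+2N_2$.)

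However, there is a genuine gap: your proof stops at the decisive counting step, and the route you sketch does not close it.
\begin{itemize}
\item Bounding $\min\{|X_i|,d\}\geq r$ throws away the information that is actually needed.
\item The estimate $e(X_i,W)\gtrsim d|X_i|/4$ is not justified, since Lemma~\ref{lem: weak regularity} only controls subsets of a single block.
\item As you observe yourself, boundary counting gives only $N_2\gtrsim kr$ rather than $kd$.
\item Even your case $a\geq k/2$ yields only $dk/500$, not $dk/300$.
\end{itemize}

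The missing idea is that each $Q_i$ has size close to $d$, up to a loss controlled by $S$. Write $s_i=|C_i|$ and $B_i=N_G(C_i)\setminus C_i$. If $s_i<d$, regularity gives $e(C_i,B_i)\geq s_i(d-s_i)$. Every vertex has at most $s_i$ neighbours in $C_i$, so
\[
|B_i\setminus X|\geq\bigl(e(C_i,B_i)-e(C_i,X)\bigr)/s_i\geq d-s_i-e(C_i,X)/r.
\]
Hence $\min\{|Q_i|,d\}\geq d-e(C_i,X)/r-|B_i\cap Z|$, and this is trivial when $s_i\geq d$.

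Every vertex of $X$ has degree $d$. Every vertex of $Z$ lies in at most one $B_i$, because it is not in $X$. Summing over $i$ therefore gives
\[
\sum_i\min\{|Q_i|,d\}\geq dk-\frac drN_1-2N_2.
\]
This double count is where the weight $d/r$ comes from: a vertex of $X$ destroys at most $d$ edges, and hence costs at most $d/r$ vertices of the neighbourhoods.

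One application of Lemma~\ref{lem: vertex-transitive toughness} to $S$ then gives $N_1+2N_2\geq\frac1{125}\bigl(dk-\frac drN_1-2N_2\bigr)$. Rearranging with $r\leq d$ yields $dk\leq300\bigl(\frac drN_1+N_2\bigr)$, with no case distinction.
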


\begin{proof}
    Let $X$ be the set of vertices outside $\bigcup_iC_i$ which have
    neighbors in at least two distinct sets among $C_1,\ldots,C_k$.
    For every $x\in X$, choose a path of length $2$ through $x$ joining
    two such sets. These paths form $\mathcal P_1$, so $N_1=|X|$.

    In $G-X$, let $\mathcal P_2$ be a maximal collection of paths of the
    second type in the statement, intersecting only on $\bigcup_iC_i$.
    Let $Z$ be the set of vertices appearing on these paths but not in
    $\bigcup_iC_i$. Thus $|Z|=2N_2$. Put $S=X\cup Z$.

    For each $i$, let $B_i=N_G(C_i)\setminus C_i$, set
    $Y_i=B_i\setminus S$, and put $Q_i=C_i\cup Y_i$. We claim that
    $Q_i$ is a component of $G-S$. It is connected by definition.
    If an edge $xy$ with $x\in Y_i$ left $Q_i$ in $G-S$, then either
    $y$ has a neighbor in some $C_j$ with $j\neq i$, giving a path of
    length $3$ from $C_i$ to $C_j$, or $y$ lies outside
    $\bigcup_j(C_j\cup N_G(C_j))$, giving an escape path of length $2$.
    Either path could be added to $\mathcal P_2$, contradicting its
    maximality. Thus $Q_1,\ldots,Q_k$ are distinct components of $G-S$.

    Put $s_i=|C_i|$. If $s_i<d$, regularity gives
    $e(C_i,B_i)\geq s_i(d-s_i)$. Since every vertex of
    $B_i\setminus X$ has at most $s_i$ neighbors in $C_i$, we have
    \[
        |B_i\setminus X|
        \geq\frac{e(C_i,B_i)-e(C_i,X)}{s_i}
        \geq d-s_i-\frac{e(C_i,X)}{s_i}.
    \]
    As $|Q_i|=s_i+|B_i\setminus(X\cup Z)|$, it follows that
    $\min\{|Q_i|,d\}\geq d-e(C_i,X)/r-|B_i\cap Z|$.
    The same inequality is immediate when $s_i\geq d$. Summing over $i$,
    and using that a vertex of $Z$ belongs to at most one $B_i$, gives
    \[
        \sum_{i=1}^k\min\{|Q_i|,d\}
        \geq dk-\frac drN_1-2N_2.
    \]

    Lemma~\ref{lem: vertex-transitive toughness} applied to
    $S=X\cup Z$ now gives
    \[
        N_1+2N_2
        \geq\frac1{125}
        \left(dk-\frac drN_1-2N_2\right).
    \]
    Rearranging and using $r\leq d$, we obtain
    \[
        dk\leq 125(N_1+2N_2)+\frac drN_1+2N_2
        \leq300\left(\frac drN_1+N_2\right).
    \]
    Dividing by $300$ proves the lemma.
\end{proof}

\subsection{The thinning--sprinkling technique on vertex-transitive graphs}

We now combine Lemma~\ref{lem: vertex-transitive connectors} with the
thinning--sprinkling technique. Fix $\varepsilon>0$ and assume $\varepsilon\leq1/4$, as otherwise we can take $\varepsilon=1/4$. Throughout this subsection all constants are allowed to depend on $\varepsilon$. 

We first prove the thinning step, which says that if $G[V_p]$ fails to
be $(1-\varepsilon)$-tough with sufficiently large probability, then,
with still non-negligible probability, $G[V_{p/2}]$ contains many
distinct components which are already robustly seeded inside $V_{p/4}$.

\begin{lemma}\label{lem: vertex-transitive thinning}
For every $0<\varepsilon\leq1/4$ the following holds for all small enough $c>0$. Let $G$ be a connected $d$-regular vertex-transitive
graph and let $p\in[0,1]$ satisfy $p^2d=\omega(\log n)$. If
\[
    \mathbb P\bigl(G[V_p]\text{ is not $(1-\varepsilon)$-tough}\bigr)
    \geq2^{-cp^2d},
\]
then there exists $k\geq2$ such that, with probability at least
$2^{-2ckp^2d}$, $G[V_{p/2}]$ has $k$ distinct connected components
$C$, each meeting $V_{p/4}$, such that every component of
$G[C\cap V_{p/4}]$ has size at least $\varepsilon pd/100$.
\end{lemma}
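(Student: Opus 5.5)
The plan is to mimic the proof of Lemma~\ref{lem: tough thinning}, with two new ingredients. First, a counting argument based on the degree concentration of $G[V_p]$ shows that a failure of $(1-\varepsilon)$-toughness produces a set $S$ of size $O(k/\varepsilon)$ whose removal creates $k$ components of size $\Omega(\varepsilon pd)$. Second, the thinning step is upgraded so that the seeds inside these components are themselves large.

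\emph{Step 1 (reduction to a fixed number of components).} Let $\mathcal E_1$ be the event that $G[V_p]$ is not $(1-\varepsilon)$-tough, and let $\mathcal D$ be the event that every vertex of $G$ has $(1\pm\varepsilon/10)pd$ neighbours in $V_p$. Since $pd\geq p^2d=\omega(\log n)$, Chernoff and a union bound give $\mathbb P(\mathcal D^c)\leq n e^{-\Omega(\varepsilon^2pd)}\leq 2^{-2cp^2d}$ for $c$ small. Hence $\mathbb P(\mathcal E_1\cap\mathcal D)\geq 2^{-cp^2d-1}$. On $\mathcal E_1\cap\mathcal D$ there is $S\subseteq V_p$ such that $G[V_p]-S$ has $h\geq2$ components $X_1,\dots,X_h$ and $|S|<(1-\varepsilon)h$. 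As in Lemma~\ref{lem: tough thinning}, using $\sum_h 2^{-h}<1$, I would fix $h$ such that the corresponding event $\mathcal E_{1,h}\cap\mathcal D$ has probability at least $2^{-cp^2d-1-h}$.

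\emph{Step 2 (many large components).} On $\mathcal E_{1,h}\cap\mathcal D$, every vertex of a component $X_i$ has all of its $\geq(1-\varepsilon/10)pd$ neighbours in $V_p$ inside $X_i\cup S$. So a component with $|X_i|=s\leq \varepsilon pd/4$ sends at least $s\bigl((1-\varepsilon/10)pd-s\bigr)\geq(1-\varepsilon/3)pd$ edges to $S$. On the other hand, $e(S,V_p\setminus S)\leq(1+\varepsilon/10)pd|S|<(1+\varepsilon/10)(1-\varepsilon)pd\,h$. Comparing the two bounds shows that at least $\varepsilon h/4$ of the components have size at least $\varepsilon pd/4$. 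Set $k\geq\max\{2,\varepsilon h/4\}$ to be the number of such large components (if only one exists, $h$ is bounded and we can simply take $k=2$ with a slightly cruder estimate). Now $|S|<h\leq 4k/\varepsilon$, so all the thinning costs of the form $2^{-|S|-O(k)}$ are $2^{-O(k/\varepsilon)}\geq 2^{-ckp^2d}$, because $p^2d=\omega(1)$. The factor $2^{-h}$ from Step 1 is absorbed in the same way.

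\emph{Step 3 (thinning with robust seeds).} Condition on $V_p$ in $\mathcal E_{1,h}\cap\mathcal D$. In each large component $X_i$ I pick a vertex $v_i$ with at least $\varepsilon pd/20$ neighbours in $X_i$; averaging over $e(X_i,S)$ shows that such a vertex exists. We then require $S\cap V_{p/2}=\emptyset$ and $v_1,\dots,v_k\in V_{p/4}$, which has probability $2^{-|S|-2k}$. Given this, each $v_i$ independently has at least $\varepsilon pd/100$ neighbours of $X_i$ in $V_{p/4}$ with probability $1-e^{-\Omega(\varepsilon pd)}$. So the components $C_i\supseteq X_i\cap V_{p/2}$ of $G[V_{p/2}]$ are distinct, meet $V_{p/4}$, and the seed component of $v_i$ is large.

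\emph{Main obstacle.} The difficulty is the requirement that \emph{every} component of $G[C\cap V_{p/4}]$ has size at least $\varepsilon pd/100$, not just the one containing $v_i$. A vertex $u\in X_i\cap V_{p/4}$ whose $V_p$-neighbours lie mostly in $S$ may form a tiny seed. What I would try first is to clean $S$ in advance. I would repeatedly move into $S$ those vertices of the large components having fewer than $\varepsilon pd/10$ neighbours inside their component, and discard components that become small. The edge-count of Step 2 should show that only $O(\varepsilon)\cdot h$ vertices can be moved this way, since each moved vertex accounts for $\Omega(pd)$ edges to $S$, so $|S|$ stays $O(k/\varepsilon)$. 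After cleaning, every remaining vertex $u$ has $\Omega(\varepsilon pd)$ neighbours in its own component. Conditioned on $S\cap V_{p/2}=\emptyset$ and $u\in V_{p/4}$, a Chernoff bound gives $u$ at least $\varepsilon pd/100$ neighbours in $V_{p/4}\cap C$ with probability $1-e^{-\Omega(\varepsilon pd)}$. A union bound over the $n$ vertices then costs $ne^{-\Omega(\varepsilon pd)}$, which is negligible compared with the lower bound $2^{-|S|-2k}$ exactly when $pd$ dominates $k/\varepsilon+\log n$.

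This last comparison is the delicate point for very large $k$. If it fails, I would instead use the fact that each seed vertex $u$ already has $\Omega(\varepsilon pd)$ neighbours of its component inside $V_p$, and expose the $V_{p/4}$-neighbourhoods only locally, so that the union bound runs over $u\in V_{p/4}$ rather than over all of $V(G)$.
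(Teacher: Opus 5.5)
You have the same architecture as the paper: the concentration event $\mathcal D$, fixing the number of components, an edge count from $S$ to isolate a constant fraction of usable components, and thinning so that $S$ vanishes while every vertex of the chosen components keeps many neighbours in $V_{p/4}$. However, the step you flag as the main obstacle is left incomplete, and your reason for thinking it is delicate is mistaken. You compare the union-bound failure $ne^{-\Omega(\varepsilon pd)}$ with $2^{-|S|-2k}$, which would need $pd\gg k/\varepsilon+\log n$. Since $k$ can be of order $n$ while $pd$ may be only polylogarithmic, your argument as written does not cover large $k$, and ``exposing neighbourhoods locally'' is not an argument.

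The missing observation is independence. The event $S\cap V_{p/2}=\emptyset$ depends only on the thinning coins of vertices of $S$. The event that every $v\in C_i$ (for all $i$) has at least $\varepsilon pd/100$ neighbours in $C_i\cap V_{p/4}$ depends only on the coins of vertices of $\bigcup_iC_i$, which is disjoint from $S$. So the second event has conditional probability at least $1/2$ given the first: Chernoff plus a union bound gives failure $ne^{-\Omega(\varepsilon pd)}$, which only has to be below $1/2$, using $pd=\omega(\log n)$. The costs then multiply to $2^{-|S|-1}$, and no relation between $k$ and $pd$ is needed. Requiring $v_i\in V_{p/4}$ is also superfluous: this minimum-degree event already forces $C_i\cap V_{p/4}\neq\emptyset$, and it makes every component of $G[C\cap V_{p/4}]$ have more than $\varepsilon pd/100$ vertices.

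Your cleaning step is not justified as stated either. A moved vertex may send its edges to previously moved vertices rather than to the original $S$. Also, the number of moved vertices can genuinely be of order $h$, not $O(\varepsilon h)$; for example, most components could contain a pendant vertex with almost all of its $V_p$-neighbours in $S$. This can be repaired. Writing $M$ for the moved set, $e(S,M)+e(M)\geq(1-\varepsilon/5)pd|M|$ and $2e(M)+e(M,S)\leq(1+\varepsilon/10)pd|M|$ give $|M|\leq\frac{1+\varepsilon/10}{1-\varepsilon/2}|S|$. Moreover, at most $4|M|/(\varepsilon pd)$ large components are deleted entirely.

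The paper avoids cleaning altogether. It calls a component of $G[V_p]-S$ good if \emph{every} vertex has at most $(1-\varepsilon/3)pd$ neighbours in $S$. Each bad component is charged the $(1-\varepsilon/3)pd$ edges into $S$ of a single vertex, against the budget $e(S,\bigcup_iC_i)\leq(1+\varepsilon/6)pd|S|\leq(1-5\varepsilon/6)pdk'$. This leaves at least $\varepsilon k'/3$ good components, all of whose vertices have at least $\varepsilon pd/6$ neighbours inside their component. Combined with the independence above, this finishes the proof with the original $S$.
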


\begin{proof}
Let $\mathcal E_1$ be the event that $G[V_p]$ is not
$(1-\varepsilon)$-tough and, for $2\leq k'\leq n$, let
$\mathcal E_{1,k'}$ be the event that there exists $S\subseteq V_p$
with $|S|<(1-\varepsilon)k'$ such that $G[V_p]-S$ has $k'$
connected components. Since $\mathcal E_1$ implies
$\mathcal E_{1,k'}$ for some $k'$, we may fix $k'$ such that $\mathbb P(\mathcal E_{1,k'})
    \geq\mathbb P(\mathcal E_1)/n$.

We first restrict to the typical event $\mathcal D$ that every vertex has
between $(1-\varepsilon/6)pd$ and $(1+\varepsilon/6)pd$ neighbors in
$V_p$. Since $\varepsilon$ is fixed, Chernoff's inequality and a union
bound give $\mathbb P(\overline{\mathcal D})
    \leq e^{-\Omega_\varepsilon(pd)}$.
As $pd\geq p^2d=\omega(\log n)$, by choosing $c=c(\varepsilon)$ sufficiently small, we may assume that
\[
    \mathbb P(\mathcal E_{1,k'}\cap\mathcal D)
    \geq\frac12\mathbb P(\mathcal E_{1,k'})\geq 2^{-cp^2d-1-\log_2 n}.
\]

Fix an outcome of $V_p$ for which $\mathcal E_{1,k'}$ and $\mathcal D$ occur, and let
$S$ witness $\mathcal E_{1,k'}$. Let
$C_1,\ldots,C_{k'}$ be distinct components of $G[V_p]-S$. Call $C_i$
\emph{good} if every vertex of $C_i$ has at most
$(1-\varepsilon/3)pd$ neighbors in $S$, and let $I$ be the set of good
indices. Every index outside $I$ contributes at least
$(1-\varepsilon/3)pd$ edges from $S$ to $\bigcup_i C_i$, while
\[
    e\left(S,\bigcup_iC_i\right)
    \leq(1+\varepsilon/6)pd|S|
    \leq(1-5\varepsilon/6)pdk'.
\]
It follows that $|I|\geq\varepsilon k'/3$.

If $\varepsilon k'/3<2$, then $k'=O_\varepsilon(1)$. Since $pd =\omega(\log n)$, we may assume in this case that
$|S|<(1-\varepsilon/3)pd$, and therefore every $C_i$ is good. Thus,
putting $k=\max\{\varepsilon k'/3,2\}$,
we can in all cases choose $k$ good components, which we denote by
$C_1,\ldots,C_k$. For every $i\leq k$ and $v\in C_i$, all neighbors of $v$ in $V_p$ lie
in $C_i\cup S$. Since $C_i$ is good and $\mathcal D$ holds,
\[
    d_G(v,C_i)
    \geq(1-\varepsilon/6)pd-(1-\varepsilon/3)pd
    =\varepsilon pd/6.
\]

Choose one vertex $v_i\in C_i$ for every $i\leq k$. We now perform the
nested thinning $V_{p/4}\subseteq V_{p/2}\subseteq V_p$, obtained by two independent halvings. First we require $S\cap V_{p/2}=\varnothing$.
This occurs with probability $2^{-|S|}$.

Condition on this event. All vertices of
$C_1\cup\cdots\cup C_k$ still belong to $V_{p/4}$ independently with
probability $1/4$. Since every vertex of $C_i$ has at least
$\varepsilon pd/6$ neighbors in $C_i$, another Chernoff bound and a
union bound show that, 
with conditional probability at least $1/2$,
\[
    d_G(v,C_i\cap V_{p/4})\geq \varepsilon pd/100
\]
for every $i\leq k$ and every $v\in C_i$.

Whenever these events occur, let us select one component in $G[C_i\cap V_{p/2}]$ for every $i\leq k$. These components satisfy the conclusion of the lemma. Indeed, such components exist since every $C_i$ meets $V_{p/4}$ and they break into components of size at least $\varepsilon pd/100$ since $G[C_i\cap V_{p/4}]$ has minimum degree $\varepsilon pd/100$. Further, they are distinct as  $S\cap V_{p/2}=\varnothing$ and the
$C_i$ are distinct components of $G[V_p]-S$. Thus, if $\mathcal E_2$
denotes the event in the conclusion of the lemma, then
\[
    \mathbb P\bigl(\mathcal E_2\mid \mathcal E_{1,k'}\cap\mathcal D\bigr)
    \geq2^{-|S|-1}
    \geq2^{-k'-1}.
\]
Therefore,
$$
    \mathbb P(\mathcal E_2)
    \geq\mathbb P\bigl(\mathcal E_{1,k'}\cap \mathcal D\bigr)\cdot \mathbb P\bigl(\mathcal E_2\mid\mathcal E_{1,k'}\cap \mathcal D\bigr)\geq 2^{-6k/\varepsilon-1}\cdot 2^{-cp^2d-1-\log_2 n},
$$
where we use that the definition of $k$ gives $k'\leq6k/\varepsilon$.
Since
$p^2d=\omega(\log n)$, we get
\[
    \mathbb P(\mathcal E_2)\geq2^{-2ckp^2d},
\]
as required.
\end{proof}

\noindent
We next show that the obstruction produced by thinning is too unlikely to
occur. Recall the notation introduced in Section~\ref{sec: notation}. 

\begin{lemma}\label{lem: vertex-transitive sprinkling}
For every $0<\varepsilon\leq1/4$ the following holds for sufficiently small $c>0$. Let $p^2d=\omega(\log n)$ and $k\geq 2$. The probability that $G[V_{p/2}]$ has $k$ distinct connected components $C$, each meeting $V_{p/4}$, such that every component of $G[C\cap V_{p/4}]$ has size at least $\varepsilon pd/100$ is less than $2^{-2ckp^2d}$.
\end{lemma}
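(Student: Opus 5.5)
We follow the template of Lemma~\ref{lem: toughness sprinkling}, replacing Lemma~\ref{lem: menger for tough} by Lemma~\ref{lem: vertex-transitive connectors} with $r=\varepsilon pd/100$. Fix $k\geq2$.

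\textbf{Setup.} Set $U_0=V_{p/4}$ and sprinkle $q$-random sets $U_1,\ldots,U_m$ with $m=c_0kp^2d$ and $mq\leq p/8$. We then add one final $p/8$-random set $U_{m+1}$. As in Section~\ref{sec: connectivity}, the union $U_1\cup\cdots\cup U_{m+1}$ can be coupled inside $V_{p'}$, since $p'\geq p/4$.

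The obstruction now concerns components $C$ of $G[V_{p/2}]$ in which every component of $G[C\cap U_0]$ has size at least $r$. So we track \emph{robustly seeded} components of $G_i$, namely those containing a component of $G[U_0]$ of size at least $r$. A connected component of $G[U_0]$ of size at least $r$ lies inside a single component of each $G_i$. Choose $k$ such connected pieces $D_1,\ldots,D_k$ and let $C_i(D_j)$ be the component of $G_i$ containing $D_j$. We need a version of Lemma~\ref{lem: minimal vertex} for these pieces, measuring growth by $|C_i\cap U_0|$; its proof goes through verbatim with seed pieces in place of seed vertices. The union bound is now over $k$-tuples of pieces, at most $n^k$ choices, so it costs $e^{O(k\log n)}$. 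This is negligible against $e^{-\Omega(kp^2d)}$.

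\textbf{Round success.} Reveal $\mathcal F_i$ for the $k$ pieces. Call round $i+1$ successful if any of the following holds:
\begin{itemize}
\item two of the components already coincide;
\item $\frac{d}{r}N_1+N_2\geq dk/600$ is witnessed by paths among $C_i(D_1),\ldots,C_i(D_k)$ of length at most $3$ (so $\mathcal P_1$ plus the length-$3$ part of $\mathcal P_2$);
\item some $C_i(D_j)\cap U_0$ grows.
\end{itemize}
Otherwise, Lemma~\ref{lem: vertex-transitive connectors} applies to $C_i(D_j)$, since each has size at least $r$. It forces at least $dk/600$ escape paths $axy$ with $y$ unexplored. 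Each such path yields growth with probability $q\cdot p/4$. With $q\approx 1/(pdk)$ times a constant, the round succeeds with probability at least $1/2$. Binomial domination then gives more than $k\log_2 n$ successful rounds except with probability $e^{-\Omega(kp^2d)}$. By the doubling argument this is impossible unless the final components are joined by the required connectors. So with probability $1-e^{-\Omega(kp^2d)}$, every $k$ robustly seeded components of $G_m$ satisfy $\frac dr N_1+N_2\geq dk/600$ using length-$\leq3$ connecting paths.

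\textbf{Final sprinkle.} Each $\mathcal P_1$ path has one internal vertex and survives $U_{m+1}$ with probability $p/8$. Each length-$3$ path survives with probability at least $p^2/64$. The paths are internally disjoint, so
\[
\Pr(\text{no merge})\leq \exp\bigl(-\Omega(pN_1+p^2N_2)\bigr).
\]
Since $r=\Theta(pd)$, we have $pN_1=\Theta(p^2\,\tfrac dr N_1)$, and hence the exponent is $\Omega(p^2dk)$. A union bound over $n^k$ tuples then shows that at most $k-1$ robustly seeded components remain.

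Every component $C$ in the event of the statement contains some $D_j$. Hence the event has probability less than $2^{-2ckp^2d}$ for small $c$.

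\textbf{Main obstacle.} The delicate point is exactly this balancing of $N_1$, which has cheap single-vertex connectors weighted by $d/r$, against $N_2$. It is also why the robust seed size $r\asymp \varepsilon pd$ is needed.
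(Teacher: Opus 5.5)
You follow the paper's route: the same schedule with $U_0=V_{p/4}$ and $q$-random rounds, and Lemma~\ref{lem: vertex-transitive connectors} in place of Menger. The round-success rule, the doubling count and the final $p/8$-sprinkle also match the paper, where the weight $d/r=\Theta_\varepsilon(1/p)$ converts $pN_1$ into $p^2\cdot\frac dr N_1$. There is, however, one step that fails as you have set it up.

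\textbf{The doubling step breaks for your notion of seeding.} You track \emph{robustly seeded} components, meaning those containing \emph{some} component of $G[U_0]$ of size at least $r$. You then claim Lemma~\ref{lem: minimal vertex} carries over verbatim to large seed pieces. It does not. At a merge step, the reverse induction must switch to the merging $G_{j-1}$-component with the smaller $U_0$-part, and under your definition that component may contain only small pieces. For example, suppose $C_{j-1}(D)$ contains all large pieces of the final component and at round $j$ absorbs a component whose $U_0$-part is a single vertex. Then no large piece $D'$ satisfies $|C_j(D')\cap U_0|\geq 2|C_{j-1}(D')\cap U_0|$. You also cannot switch to the small piece, because Lemma~\ref{lem: vertex-transitive connectors} requires every current component to have size at least $r$. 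So growth events need not be doublings, and having more than $k\log_2 n$ successful rounds gives no contradiction.

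\textbf{The fix.} Use the stronger notion that the statement actually supplies, which is the paper's ``specially seeded'': $C$ meets $U_0$ and \emph{every} component of $G[C\cap U_0]$ has size at least $r$.
\begin{itemize}
\item This property is inherited by every $G_i$-component inside $C$ that meets $U_0$. Hence the reverse induction always lands on a vertex whose $U_0$-component is large, and Lemma~\ref{lem: vertex-transitive connectors} applies at every round.
\item It also transfers in the final step. Any $U_0$-meeting component of $G_m$ or $G_{m+1}$ that lies inside a specially seeded component of $G[V_{p/2}]$ is again specially seeded.
\end{itemize}
A smaller point: the pieces $D_j$ are random, so the union bound should run over fixed vertices $v_1,\ldots,v_k$. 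Declare a round automatically successful if some $v_j\notin U_0$ or its component in $G[U_0]$ has size less than $r$; this is $\mathcal F_0$-measurable. With these changes your argument coincides with the paper's proof.
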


\begin{proof}
Fix $k\geq2$. Set
\[
   r=\varepsilon pd/100,\qquad t=\varepsilon d/150,\qquad m=k\cdot p^2t/128
    \qquad\text{and}\qquad
    q=16/(k\cdot pt).
\]
Note that $q\leq1$ since $pd=\omega(1)$. Let $U_0=V_{p/4}$, and let $U_1,\ldots,U_m$ be $q$-random subsets of $V(G)$, independent of each other and $U_0$. We call a component $C$ \emph{specially seeded} if $C$ meets $U_0$ and every component of $G[C\cap U_0]$ has size at least $r$. Recall from Section~\ref{sec: notation} that we write
$W_i=U_0\cup\cdots\cup U_i$, $G_i=G[W_i]$, and $C_i(v)$ denotes
the component of $v$ in $G_i$.

\begin{claim}\label{clm: vertex-transitive sprinkling paths}
There exists $c_1=c_1(\varepsilon)>0$ such that, with probability at
least $1-e^{-c_1k p^2t}$, every collection of $k$ distinct specially
seeded components $C_1,\ldots,C_k$ of $G_m$ is joined in $G$ by a
family of paths, intersecting only on $\bigcup_jC_j$, such that, if
$N_1$ of these paths have length $2$ and $N_2$ have length $3$, then
\[
    (d/r)N_1+N_2\geq kt/4.
\]
\end{claim}

\begin{proof}
Fix $v_1,\ldots,v_k\in V(G)$. For $0\leq i\leq m-1$, call round
$i+1$ successful if one of the following holds: some $C_0(v_j)$ has
size less than $r$; two of $C_i(v_1),\ldots,C_i(v_k)$ coincide; there
is already a family of paths joining distinct components among
$C_i(v_1),\ldots,C_i(v_k)$, intersecting only on their union, such
that $(d/r)N_1+N_2\geq tk/4$, where $N_1$ paths have length $2$ and
$N_2$ have length $3$; or
$C_i(v_j)\cap U_0\neq C_{i+1}(v_j)\cap U_0$ for some $j$. We claim
that, conditioned on the exploration history
$\mathcal F_i(v_1,\ldots,v_k)$, every round is successful with
probability at least $1/2$.

Reveal $\mathcal F_i(v_1,\ldots,v_k)$ and suppose that the first three
possibilities do not hold. The current components
$C_i(v_1),\ldots,C_i(v_k)$ are disjoint connected sets,
each of size at least $r$. Apply
Lemma~\ref{lem: vertex-transitive connectors}.
The paths in $\mathcal P_1$, together with those paths in
$\mathcal P_2$ of length $3$ joining two distinct current components,
would form a family of the type appearing in the third possibility.
Since that possibility does not hold, their contribution to
$(d/r)N_1+N_2$ is less than $tk/4$. On the other hand,
Lemma~\ref{lem: vertex-transitive connectors} gives total contribution
at least $tk/2$. Therefore there are at least $tk/4$ remaining paths in
$\mathcal P_2$, namely those of length $2$ whose other endpoint lies
outside
\[
    \bigcup_{h=1}^k
    \bigl(C_i(v_h)\cup N_G(C_i(v_h))\bigr).
\]

Each such path has the form $axy$, where $a\in C_i(v_j)$ for some $j$.
The event $x\in U_{i+1}$ has probability $q$, while $y\in U_0$ has
conditional probability $p/4$, since $\mathcal F_i(v_1,\ldots,v_k)$
only reveals $U_0$-membership in the current components and their
neighborhoods. Hence both occur with probability
$q(p/4)=4/(kt)$. The paths are disjoint outside the current
components, so these events are independent. Whenever one occurs, the
$U_0$-part of one of the current components grows. Consequently the
probability that round $i+1$ is successful is at least
\[
    1-\left(1-4/(kt)\right)^{tk/4}
    \geq1-e^{-1}>\frac12.
\]

Let $X_{v_1,\ldots,v_k}$ be the number of successful rounds. As explained in Section~\ref{sec: notation}, $X_{v_1,\ldots,v_k}$ stochastically dominates
$\operatorname{Bin}(m,1/2)$. Since $m=k\cdot p^2t/128=O_\varepsilon(k\cdot p^2d)$ and $p^2d=\omega(\log n)$, a standard binomial lower-tail estimate,
followed by a union bound over all $v_1,\ldots,v_k$, shows that for some
$c_1=c_1(\varepsilon)>0$, with probability at least
$1-e^{-c_1kp^2d}$ we have $X_{v_1,\ldots,v_k}>k\log_2n$ for every
choice of $v_1,\ldots,v_k$.

Assume this event holds and let $C_1,\ldots,C_k$ be distinct specially
seeded components of $G_m$ for which the conclusion of the claim
fails. By Lemma~\ref{lem: minimal vertex}, for every $j$ we may choose
$v_j\in C_j\cap U_0$ such that whenever
$C_i(v_j)\cap U_0$ changes from one round to the next, its size at
least doubles. For these vertices the first two possibilities are excluded
by construction, and the third is also impossible: truncating an earlier
connecting family at the enlarged components would give one at round
$m$ without decreasing $(d/r)N_1+N_2$. Hence every successful round doubles
the $U_0$-part of at least
one of the $k$ components. More than $k\log_2n$ successful rounds would
force one component to double more than $\log_2n$ times, a
contradiction. This proves the claim.
\end{proof}

\begin{claim}\label{clm: vertex-transitive final sprinkling}
Condition on an outcome of $U_0,\ldots,U_m$ satisfying
Claim~\ref{clm: vertex-transitive sprinkling paths}, and let
$U_{m+1}$ be an independent $p/8$-random subset of $V(G)$. There
exists $c_2=c_2(\varepsilon)>0$ such that, with probability at least
$1-e^{-c_2kp^2d}$, $G_{m+1}$ has at most $k-1$ specially seeded
components.
\end{claim}

\begin{proof}
Fix $k$ distinct specially seeded components
$C_1,\ldots,C_k$ of $G_m$. By the previous claim, there is a family of
internally disjoint paths joining distinct components such that
$(d/r)N_1+N_2\geq kt/4$, where $N_1$ paths have length $2$ and $N_2$
have length $3$. A path of length $2$ is contained in $G_{m+1}$ if
its unique internal vertex lies in $U_{m+1}$, an event of probability
$p/8$. A path of length $3$ is contained in $G_{m+1}$ if both of its
internal vertices lie in $U_{m+1}$, an event of probability $p^2/64$.
Since the paths are internally disjoint, these events are independent.
Thus the probability that none of the paths joins two of the components
is at most
\[
    \left(1-\frac p8\right)^{N_1}
    \left(1-\frac{p^2}{64}\right)^{N_2}
    \leq
    \exp\left\{-\frac p8N_1-\frac{p^2}{64}N_2\right\}
    \leq
    \exp\left\{-\frac {\varepsilon p^2}{800}\left((d/r)N_1+N_2\right)\right\},
\]
where we use $d/r=100/(\varepsilon p)$.
Recalling that
$(d/r)N_1+N_2\geq kt/4$ and $t=\Omega_\varepsilon(d)$, the last expression is at most
$e^{-c_0kp^2d}$ for some $c_0=c_0(\varepsilon)>0$.
Every specially seeded component of $G_{m+1}$ contains one of $G_m$,
since the components of $G[U_0]$ are fixed. There are at most $n^k$
choices of the $k$ components. Since $p^2 d=\omega(\log n)$, a union bound
proves the claim after taking $c_2>0$ sufficiently small.
\end{proof}

It remains only to couple these sprinklings with $V_{p/2}$. Put
$R=U_1\cup\cdots\cup U_{m+1}$. Since $\varepsilon\leq1/4$,
\[
    \mathbb P(v\in R)
    \leq mq+p/8
    = p/4.
\]
Hence we may couple $R$ as a subset of $V_{p'}$ (as defined in Section~\ref{sec: notation}). On the event supplied by the two claims,
$G_{m+1}$ has at most $k-1$ specially seeded components, and therefore
so does $G[V_{p/2}]$. This proves the lemma by choosing $c$ sufficiently small compared to $c_1$ and $c_2$.
\end{proof}

Combining the thinning--sprinkling steps, i.e., Lemmas~\ref{lem: vertex-transitive thinning} and~\ref{lem: vertex-transitive sprinkling}, with \(c\) sufficiently small, proves Theorem~\ref{thm: vertex-transitive subsampling}.

\section{Subsampling nearly regular sublinear expanders}
In this section we prove that a nearly regular sublinear expander is likely to stay an expander after subsampling. As a corollary, we will derive that nearly regular sublinear expanders are likely to be $(1-o(1))$-tough after subsampling. We could also prove this directly with an argument similar to that of the previous section. However, we believe that preserving expansion is of independent interest. Moreover, establishing this stronger conclusion allows us to illustrate a somewhat different sprinkling argument.

In this section, for $V\subseteq V(G)$, we write $\varphi(V)=V\cup N_G(V)$. For a collection $\mathcal C$ of components, we write $V(\mathcal C)$ for the union of their vertex sets. Through the proofs we assume that $n$ tends to infinity. We begin with the thinning step. Here $k$ denotes the size of a set witnessing failure of expansion.
\begin{lemma}\label{lem: expander thinning}
 Let $0<\lambda\leq1$ and $0<\varepsilon\leq 1/160$, let $G$ be a $\lambda$-expander on $n$ vertices with all degrees between $d$ and $(1+o(1))d$, and suppose that $p^2\lambda d=\omega(\log n)$. If
    \[
    \mathbb P\bigl(\text{$G[V_p]$ is not an $\varepsilon p\lambda$-expander}\bigr)\geq 2^{-\varepsilon p^2\lambda d},
    \]
    then there is $2\leq k\leq 3pn/5$ such that the following hold with probability at least $2^{-80\varepsilon p\lambda k}$. Every vertex in $V(G)$ has at most $2pd$ neighbors in $V_{p/4}$ and $\delta(G[V_{p/4}])\geq pd/64$. Further, there is a collection $\mathcal C$ of components of $G[V_{p/4}]$ with $|V(\mathcal C)|\geq k/5$ with the following property. Let $\mathcal C'$ be the components of $G[V_{p/2}]$ which meet $V(\mathcal C)$. Then, $|V(\mathcal C')|\leq 9k/17$ and $V(\mathcal C')$ is contained in $\varphi(V(\mathcal C))$.
\end{lemma}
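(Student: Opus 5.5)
We follow the template of Lemmas~\ref{lem: tough thinning} and~\ref{lem: vertex-transitive thinning}. Let $\mathcal E_1$ be the event that $G[V_p]$ is not an $\varepsilon p\lambda$-expander. For $1\leq k\leq n$, let $\mathcal E_{1,k}$ be the event that some $U\subseteq V_p$ with $|U|=k$ and $|U|\leq|V_p|/2$ has $|N_{G[V_p]}(U)|<\varepsilon p\lambda k$. We split $\mathbb P(\mathcal E_1)\leq\sum_k\mathbb P(\mathcal E_{1,k})$ dyadically, as in Lemma~\ref{lem: tough thinning}, and fix $k$ with $\mathbb P(\mathcal E_{1,k})\geq 2^{-k}\mathbb P(\mathcal E_1)$ or $\geq \mathbb P(\mathcal E_1)/n$. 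We then intersect with a typical event $\mathcal D$ whose failure probability is $e^{-\Omega(pd)}$, which is negligible since $pd\geq p^2\lambda d=\omega(\log n)$. On $\mathcal D$, every vertex has between $(1-o(1))pd$ and $(1+o(1))pd$ neighbours in $V_p$, and $|V_p|\leq(1+o(1))pn$. The bound $|U|\leq|V_p|/2$ then gives $k\leq 3pn/5$. Small $k$ are excluded: $\mathcal D$ and the minimum degree force $|N_{G[V_p]}(U)|\geq pd/2-k$, so $k\geq 2$, and in fact $k$ is of order at least $pd$.

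Fix $V_p$ in $\mathcal E_{1,k}\cap\mathcal D$, with witness $U$ and $S=N_{G[V_p]}(U)$, so $|S|<\varepsilon p\lambda k$. In the thinning we first require $S\cap V_{p/2}=\emptyset$, which has probability $2^{-|S|}\geq 2^{-\varepsilon p\lambda k}$. Then every component of $G[V_{p/2}]$ meeting $U$ lies inside $U$. Next we discard the bad part of $U$. Since $e(S,U)\leq 2pd|S|$ is small, all but $O(\varepsilon\lambda k)$ vertices of $U$ have at most $pd/4$ neighbours in $S$; call the rest of $U$ good. A good vertex has at least $(3/4-o(1))pd$ neighbours inside $U$. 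Conditioned on the first thinning, Chernoff and a union bound (affordable since $pd=\omega(\log n)$) show that with probability at least $1/2$ every vertex of $G$ has at most $2pd$ neighbours in $V_{p/4}$, and every good vertex in $V_{p/4}$ has at least $pd/64$ neighbours in $U\cap V_{p/4}$. A further Chernoff bound, valid since $k=\Omega(pd)$, shows that $|U\cap V_{p/4}|\geq k/5$ with good probability.

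It remains to enforce the minimum degree condition $\delta(G[V_{p/4}])\geq pd/64$ globally and to choose $\mathcal C$. The condition can only fail at vertices with many neighbours in $S$, that is, at bad vertices of $U$ and at vertices near $S$ outside $U$. For these we pay separately, requiring the $O(\varepsilon\lambda k)$ bad vertices of $U$ to avoid $V_{p/4}$, at cost $2^{-O(\varepsilon\lambda k)}$. For vertices outside $U$ we argue via the same typicality for all of $V(G)$. Let $\mathcal C$ be the components of $G[V_{p/4}]$ inside $U$. The components of $G[V_{p/2}]$ meeting $V(\mathcal C)$ lie in $U\cap V_{p/2}$. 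Every vertex of $U\cap V_{p/2}$ is either in $V_{p/4}$ or adjacent to $V(\mathcal C)$, because good vertices have many $V_{p/4}$-neighbours in $U$; hence $V(\mathcal C')\subseteq\varphi(V(\mathcal C))$. Finally, $|V(\mathcal C')|\leq|U\cap V_{p/2}|\leq(1/2+o(1))k\leq 9k/17$ by Chernoff. Multiplying the costs gives probability at least $2^{-80\varepsilon p\lambda k}$, after absorbing $2^{-k}\mathbb P(\mathcal E_1)$ and $n^{-1}$ using $k=\Omega(pd)$ and $p^2\lambda d=\omega(\log n)$.

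The main obstacle is the global minimum degree condition on $G[V_{p/4}]$. The vertices outside $U$ whose neighbourhoods are dominated by $S$ must be controlled without paying a factor that depends on $n$ rather than on $k$. A second obstacle is making the factors $2^{-k}$ or $1/n$ compatible with $2^{-80\varepsilon p\lambda k}$, which needs $k$ large. If the direct approach fails, we would first try replacing $V_{p/4}$ by the subgraph of good vertices before defining $\mathcal C$, and choosing the dyadic weighting in the selection of $k$ accordingly.
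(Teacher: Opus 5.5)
There is a genuine gap, and it is the one you flag as the main obstacle. Nothing in your argument yields $\delta(G[V_{p/4}])\geq pd/64$, and the same defect breaks $V(\mathcal C')\subseteq\varphi(V(\mathcal C))$.

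A one-round split into good and bad vertices cannot work, because degree loss cascades. The bad vertices of $U$ number at most $e(S,U)/(pd/4)\leq 8|S|\leq 8\varepsilon p\lambda k$. Note the factor $p$: a cost of $2^{-O(\varepsilon\lambda k)}$, as you wrote, would be too expensive. However, $8|S|$ can be far larger than $pd$. So once the bad vertices are forbidden, a good vertex can lose most of its $(3/4-o(1))pd$ neighbours in $U$. Forbidding the newly deficient vertices creates further deficient ones, and the crude bound grows by a constant factor each round.

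Outside $U$ the situation is the same. A vertex of $V_p\setminus(U\cup S)$ may have almost all its $V_p$-neighbours in $S$, and is then isolated in $G[V_{p/2}]$ once $S\cap V_{p/2}=\emptyset$. No typicality statement about $V(G)$ excludes this, since $S$ is determined adversarially after $V_p$ is revealed. Your fallback of replacing $V_{p/4}$ by its good part is not available, because the lemma is a statement about $G[V_{p/4}]$ itself. Also, forbidding bad vertices only from $V_{p/4}$ is not enough for the containment: a bad vertex in $U\cap(V_{p/2}\setminus V_{p/4})$ can belong to a component of $\mathcal C'$ without being adjacent to $V(\mathcal C)$.

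The missing idea is to peel to a core and bound the entire cascade at once by a degeneracy count. Let $W\subseteq V_p\setminus S$ be maximal with $\delta(G[W])\geq pd/8$, and put $S'=V_p\setminus(S\cup W)$. Each vertex of $S'$ has at least $pd/2$ neighbours in $V_p$ but fewer than $pd/8$ in $W$. Every subgraph of $G[S']$ has minimum degree below $pd/8$, so $e(G[S'])\leq(pd/8)|S'|$. Summing degrees over $S'$ gives $(pd/2)|S'|\leq(3pd/8)|S'|+2pd|S|$, that is, $|S'|\leq 16|S|$.

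Now demand $(S\cup S')\cap V_{p/2}=\emptyset$, at cost $2^{-17|S|}$. This forces $V_{p/4}\subseteq V_{p/2}\subseteq W$. One Chernoff bound (every vertex of $W$ keeps at least $pd/64$ neighbours in $W\cap V_{p/4}$) then gives the global minimum degree. Moreover, $U'=U\cap W$ satisfies $|U'|\geq k-16|S|\geq 9k/10$ and has no edges to $W\setminus U'$. Take $\mathcal C$ to be the components of $G[V_{p/4}]$ meeting $U'$. Then $V(\mathcal C')\subseteq U'$, and the same Chernoff bound shows every vertex of $V(\mathcal C')$ has many neighbours in $U'\cap V_{p/4}\subseteq V(\mathcal C)$.

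Finally, of your two ways of fixing $k$, only the $1/n$ one works. A factor $2^{-k}$ cannot be absorbed into $2^{-80\varepsilon p\lambda k}$, since $80\varepsilon p\lambda<1$. By contrast, $n^{-1}2^{-\varepsilon p^2\lambda d}$ can be absorbed, using $k\geq pd/4$ and $\log n=o(p^2\lambda d)$.
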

\begin{proof}
Let $\mathcal E_1$ be the event that $G[V_p]$ is not an
$\varepsilon p\lambda$-expander and let $\mathcal E_{1,k}$ be the
event that there exists $U\subseteq V_p$ with
$|U|=k\leq |V_p|/2$ such that
$|N_G(U)\cap V_p|\leq\varepsilon p\lambda k$. If $\mathcal E_1$
occurs, then $\mathcal E_{1,k}$ occurs for some $k$. Hence, by
averaging, we can fix $k$ for which
$\mathbb P(\mathcal E_{1,k})\geq\mathbb P(\mathcal E_1)/n$.
Let $\mathcal E_2$ be the event in the conclusion of the lemma.
Finally, let $\mathcal E_3$ be the event that
$|V_p|\leq6pn/5$ and every vertex of $G$ has at least $pd/2$ and
at most $2pd$ neighbors in $V_p$.
Condition on an outcome of $V_p$ for which
$\mathcal E_{1,k}$ and $\mathcal E_3$ hold. Let $U\subseteq V_p$
witness $\mathcal E_{1,k}$ and put
$S=N_G(U)\cap V_p$, so that
$|S|\leq\varepsilon p\lambda k$. Note that
$pd/4\leq k\leq3pn/5$. Indeed, every vertex of $U$ has at least
$pd/2$ neighbors in $U\cup S$, so
$pd/2\leq k+|S|\leq2k$, while $k\leq|V_p|/2\leq3pn/5$. 

First, we remove a small additional set $S'$,
so that the remaining graph $G[V_p]-S-S'$ has large minimum degree.
Let $W\subseteq V_p\setminus S$ be of maximum size such that $G[W]$
has minimum degree at least $pd/8$, and set
$S'=V_p\setminus(S\cup W)$. We show that only few vertices were
lost in passing to $W$, that is, $|S'|$ is small. By $\mathcal E_3$, the sum of the degrees in
$G[V_p]$ of the vertices in $S'$ is at least $(pd/2)\cdot|S'|$. On the
other hand, maximality of $W$ implies that every vertex of $S'$ has
fewer than $pd/8$ neighbors in $W$. Moreover, every subgraph of
$G[S']$ has minimum degree less than $pd/8$, and therefore
$e(G[S'])\leq (pd/8)\cdot|S'|$. Since these internal edges contribute twice
to the degree sum over $S'$, the total contributions from $W$, from
$S'$ itself, and from $S$ give
$(pd/2)\cdot|S'| \leq \sum_{v\in S'}d_{G[V_p]}(v)\leq(3pd/8)\cdot|S'|+2pd|S|$.
This yields $|S'|\leq 16|S|$.
Set $U'=U\cap W$ and note that almost all of the original bad set $U$ lies in
the dense part $W$. Indeed, since $\varepsilon\leq 1/160$,
$|U'|\geq k-16|S|\geq 9k/10$. Moreover, there are no edges between
$U'$ and $W\setminus U'$, since every neighbor of $U$ in
$V_p\setminus U$ belongs to $S$. 

We now thin $V_p$ further to get $V_{p/2}$ and $V_{p/4}$, showing that in this process, 
many vertices of $U'$ survive in $V_{p/4}$, only
slightly more survive in $V_{p/2}$, and every surviving component
inside $W$ remains large. Standard applications of the Chernoff
bound give that, with probability at least $1/2$,
$|U'\cap V_{p/2}|\leq9k/17$,
$|U'\cap V_{p/4}|\geq k/5$, and every vertex in $W$ has at least
$pd/64$ neighbors in $W\cap V_{p/4}$. Independently,
$(S\cup S')\cap V_{p/2}=\varnothing$ with probability
$2^{-|S\cup S'|}\geq2^{-17|S|}
\geq2^{-17\varepsilon p\lambda k}$.

Suppose all these events occur. Since
$S\cup S'$ is disjoint from
$V_{p/2}$, we have $V_{p/4}\subseteq V_{p/2}\subseteq W$. It follows that
$\delta(G[V_{p/4}])\geq pd/64$ and, by $\mathcal E_3$, every
vertex of $G$ has at most $2pd$ neighbors in $V_{p/4}$.
Let $\mathcal C$ be the components of $G[V_{p/4}]$ meeting $U'$,
and let $\mathcal C'$ be the components of $G[V_{p/2}]$ meeting
$V(\mathcal C)$. Since there are no edges between $U'$ and
$W\setminus U'$, all these components remain inside $U'$. Hence
$|V(\mathcal C)|\geq k/5$ and
$|V(\mathcal C')|\leq9k/17$.

It remains only to verify that these components do not spread far
beyond the original seed set. Every vertex of $V(\mathcal C')$ lies
in $U'\subseteq W$, and hence has at least $pd/64$ neighbors in
$W\cap V_{p/4}$. Since there are no edges from $U'$ to
$W\setminus U'$, all these neighbors lie in
$U'\cap V_{p/4}\subseteq V(\mathcal C)$. Thus
$V(\mathcal C')\subseteq\varphi(V(\mathcal C))$. We have therefore
shown that, conditioned on this outcome of $V_p$, $\mathcal E_2$
occurs with probability at least
$2^{-1-17\varepsilon p\lambda k}$. Finally, the usual Chernoff bound gives
$\mathbb P(\mathcal E_3)\geq1-e^{-\Omega(pd)}$, while by our choice
of $k$ and the assumption of the lemma,
$\mathbb P(\mathcal E_{1,k})\geq
2^{-\varepsilon p^2\lambda d-\log_2n}$. Since $p^2\lambda d=\omega(\log n)$,
$\mathcal E_{1,k}$ and $\mathcal E_3$ occur simultaneously with
probability at least $2^{-2\varepsilon p^2\lambda d}$. Using
$k\geq pd/4$, we conclude that
$$
\mathbb P(\mathcal E_2)
\geq 2^{-1-17\varepsilon p\lambda k-2\varepsilon p^2\lambda d}
\geq 2^{-80\varepsilon p\lambda k}.
$$
\end{proof}

Next, we turn to the sprinkling part of the argument, which, together with Lemma~\ref{lem: expander thinning}, proves the expansion assertion of Theorem~\ref{thm: regular expander subsampling}.
\begin{lemma}\label{lem: expander sprinkling}
    For every sufficiently small $\varepsilon>0$ the following holds. Let $0<\lambda\leq1$, let $G$ be a $\lambda$-expander on $n$ vertices with all degrees between $d$ and $(1+o(1))d$, and suppose that $p^2\lambda d=\omega(\log n)$. For every $2\leq k\leq3pn/5$, the following hold with probability less than $2^{-80\varepsilon p\lambda k}$. Every vertex in $V(G)$ has at most $2pd$ neighbors in $V_{p/4}$ and $\delta(G[V_{p/4}])\geq pd/64$. Further, there exists a collection $\mathcal C$ of components of $G[V_{p/4}]$ with $|V(\mathcal C)|\geq k/5$ such that the following holds. Let $\mathcal C'$ be the components of $G[V_{p/2}]$ which meet $V(\mathcal C)$. Then, $|V(\mathcal C')|\leq9k/17$ and $V(\mathcal C')$ is contained in $\varphi(V(\mathcal C))$.
\end{lemma}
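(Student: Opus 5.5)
Throughout, set $U_0=V_{p/4}$ and expose $V_{p/2}=U_0\cup V_{p'}$ as in Section~\ref{sec: notation}. We may assume that $U_0$ satisfies the two degree conditions (every vertex has at most $2pd$ neighbours in $U_0$, and $\delta(G[U_0])\geq pd/64$), since otherwise the event fails. Fix $k$. For a collection $\mathcal C$ of components of $G[U_0]$, write $A=V(\mathcal C)$. Note that $N_G(A)\cap U_0=\varnothing$, because $A$ is a union of components of $G[U_0]$. The plan is to fix $\mathcal C$ with $k/5\leq|A|\leq 9k/17$, bound the probability over the sprinkling $V_{p'}$ that the bad event holds for this $\mathcal C$ by $2^{-c\,p\lambda k}$, and then take a union bound over $\mathcal C$.

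\emph{Counting.} Since $\delta(G[U_0])\geq pd/64$, every component of $G[U_0]$ has at least $pd/64$ vertices. Hence $\mathcal C$ consists of at most $64\cdot(9k/17)/(pd)=O(k/(pd))$ components, all of which are fixed once $U_0$ is revealed. The number of admissible $\mathcal C$ is therefore at most $n^{O(k/(pd))}=2^{O(k\log n/(pd))}$. Because $p^2\lambda d=\omega(\log n)$, this count is $2^{o(p\lambda k)}$. So it suffices to show that, for each fixed $\mathcal C$, the bad event has probability at most $2^{-c\,p\lambda k}$ for some absolute $c>0$, and then choose $\varepsilon$ small compared to $c$.

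\emph{Sprinkling for fixed $A$.} Since $k\leq 3pn/5$, we have $|A|\leq 9k/17<n/2$, so the expansion of $G$ gives $|N_G(A)|\geq\lambda|A|\geq\lambda k/5$. Let $Y=N_G(A)\cap V_{p'}$. The set $V_{p'}$ is independent of $U_0$ and $p'\geq p/4$, so $|Y|$ dominates $\mathrm{Bin}(\lambda k/5,p/4)$. By Chernoff, $|Y|\geq p\lambda k/40$ except with probability $e^{-\Omega(p\lambda k)}$. Every vertex of $Y$ joins a component of $\mathcal C'$. The containment requirement $V(\mathcal C')\subseteq\varphi(A)$ then forces the following: no vertex $z\notin\varphi(A)$ adjacent to $Y$ lies in $V_{p/2}$. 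Indeed, such a $z$, whether it lies in $U_0$ or in $V_{p'}$, would join $\mathcal C'$ outside $\varphi(A)$.

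To exploit this, I would split $V_{p'}$ into two independent halves and carry out two rounds. The first half produces $Y$ as above. In the second round, apply expansion to $A\cup Y$, whose size is still at most $n/2$ if the event holds, since $|V(\mathcal C')|\leq 9k/17$. This gives $|N_G(A\cup Y)|\geq\lambda|A\cup Y|$. Every vertex of $N_G(A\cup Y)$ lies either in $N_G(A)\setminus Y$ or in $N_G(Y)\setminus\varphi(A)$. Vertices of the second kind that land in $U_0$ (already fixed) or in the second sprinkling violate containment. Vertices of $N_G(A)\setminus Y$ that land in the second sprinkling enlarge $\mathcal C'$. In either case, a linear-in-$\lambda|A|$ set of fresh vertices, each landing independently with probability $\Omega(p)$, must all be avoided. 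Alternatively, these vertices push $|V(\mathcal C')|$ upward, and iterating over $O(1)$ rounds forces $|V(\mathcal C')|>9k/17$, since the ratio $(9/17)/(1/5)$ is a constant. Each round fails with probability $e^{-\Omega(p\lambda k)}$, which gives the desired bound.

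\emph{Main obstacle.} The delicate step is the second round. The vertices of $N_G(Y)\setminus\varphi(A)$ are constrained only through $U_0$, which was already revealed when we chose $\mathcal C$. So I must make sure that the avoidance events used are genuinely fresh randomness, namely the $V_{p'}$-memberships, rather than conditions on $U_0$. I also need to make sure that the growth bound per round is multiplicative of order $1+\Omega(\lambda p)$ rather than merely additive. I would first try to run the rounds with $O(1/(p\lambda))$ small sprinklings, in the style of the proof of Lemma~\ref{lem: toughness sprinkling}. In each round, either $\mathcal C'$ leaves $\varphi(A)$ or it gains $\Omega(p\lambda|A|)$ vertices inside $\varphi(A)$, and with constant probability per round this eventually exceeds $9k/17$. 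I would use the near-regularity (maximum degree $(1+o(1))d$ together with the $2pd$ bound on $U_0$-degrees) to control how many vertices of $\varphi(A)$ can be absorbed before expansion forces an escape.
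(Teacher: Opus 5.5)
The set-up matches the paper: conditioning on $U_0$, fixing $\mathcal C$ with $k/5\le|A|\le 9k/17$, and the $n^{O(k/(pd))}=2^{o(p\lambda k)}$ union bound are all as there. The sprinkling step, however, has a genuine gap.

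\emph{The growth argument is too weak.} Using only $|N_G(A)|\ge\lambda|A|$, the sprinkled neighbours are guaranteed to enlarge $\mathcal C'$ by only $\Omega(p\lambda|A|)$ vertices, a factor $1+O(p\lambda)$. You need to get from $|A|\ge k/5$ past $9k/17$, which is a constant factor. The whole sprinkling budget has density $p'\le p/2$, so only $O(1)$ rounds of density $\Theta(p)$ are available. Cutting $V_{p'}$ into $\Theta(1/(p\lambda))$ rounds just lowers each round's density to $O(p^2\lambda)$, and the total guaranteed growth stays $1+O(p\lambda)$.

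\emph{The escape is not forced.} $N_G(A)$ alone may be far larger than $\lambda|A\cup Y|$, so the expansion of $A\cup Y$ can be witnessed entirely by $N_G(A)\setminus Y$. Sprinkled vertices there are compatible with the bad event; they only enlarge $\mathcal C'$ slightly. So there is no set of fresh vertices that ``must all be avoided'', and your per-round bound $e^{-\Omega(p\lambda k)}$ concerns an event the bad event does not imply. In fact a size argument alone cannot work. Containment forces $V(\mathcal C')\subseteq A\cup(\varphi(A)\cap V_{p'})$, which typically has about $|A|+p'|\varphi(A)|$ vertices. This can be as small as roughly $9|A|/8$, so you must exhibit an escape from $\varphi(A)$.

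\emph{The missing idea.} Apply expansion to $\varphi(A)$ rather than to $A$, after using the degree conditions to see that $\varphi(A)$ is large. Every vertex of $A$ has at least $d$ neighbours, all in $\varphi(A)$, while every vertex of $G$ has at most $2pd$ neighbours in $A\subseteq U_0$. Hence $|\varphi(A)|\ge|A|/(2p)$.
\begin{itemize}
\item If $|\varphi(A)|<99n/100$, Lemma~\ref{lem: matching in expander} gives a matching $M$ of size at least $\lambda|A|/(800p)=\Omega(\lambda k/p)$ between $\varphi(A)$ and its complement. Each edge $xy$ has $x\in N_G(A)$, so if $x,y\in V_{p'}$ then $y\in V(\mathcal C')\setminus\varphi(A)$. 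The edges are disjoint and $V_{p'}$ is independent of $U_0$, so containment survives with probability at most $(1-p'^2)^{|M|}=e^{-\Omega(p\lambda k)}$. The $\Theta(p^2)$ cost of a two-vertex escape is exactly paid for by the extra factor $1/p$ in $|M|$, so a single sprinkling suffices and no iteration is needed.
\item If $|\varphi(A)|\ge 99n/100$, the matching lemma is unavailable. But then $\varphi(A)\setminus A$ is disjoint from $U_0$ and has $\Omega(n)$ vertices. With probability $1-e^{-\Omega(pn)}$, at least $pn/5\ge k/3$ of them are sprinkled and join $\mathcal C'$, giving $|V(\mathcal C')|\ge k/5+k/3>9k/17$.
\end{itemize}
Your closing remark about the $2pd$ bound points in this direction, but the bound has to be used specifically to enlarge the set to which expansion is applied.
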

As in the previous sections the sprinkling step needs the following lemma which, in some sense, takes on the role of Menger's connectivity theorem.
\begin{lemma}\label{lem: matching in expander}
    Let $0<\lambda\leq1$, let $G$ be a $\lambda$-expander and let $S\subseteq V(G)$ with $|S|\leq99|V(G)|/100$. Then, there is a matching in $G$ between $S$ and $V(G)\setminus S$ of size at least $\lambda|S|/400$.
\end{lemma}
\begin{proof}
    Either $S$ or $V(G)\setminus S$ is of size at most $|V(G)|/2$, and let $X\in\{S,V(G)\setminus S\}$ be that set. Note that $|X|\geq|S|/100$. Let $M$ be a maximum matching between $X$ and $V(G)\setminus X$ and suppose towards a contradiction that $|M|<\lambda|S|/400$. Let $X'\subseteq X$ be the set of vertices not covered by $M$. Then, $|X'|\geq|X|/2$ and, thus, since $G$ is a $\lambda$-expander, $|N_G(X')|\geq\lambda|S|/200$. Since $M$ is a maximum matching, we have that $N_G(X')\subseteq V(M)$, so that $M$ contains at least $\lambda|S|/400$ edges, a contradiction.
\end{proof}
\begin{proof}[Proof of Lemma~\ref{lem: expander sprinkling}]
Reveal $V_{p/4}$ and condition on an arbitrary outcome satisfying that every vertex of
$G$ has at most $2pd$ neighbors in $V_{p/4}$ and
$\delta(G[V_{p/4}])\geq pd/64$. Let $\mathcal L$ be the set of
components of $G[V_{p/4}]$, which, by the degree condition, each
have more than $pd/64$ vertices. Fix a collection
$\mathcal C\subseteq\mathcal L$ with
$k/5\leq |V(\mathcal C)|\leq9k/17$. We will show that after sprinkling,
with probability $1-\exp(-\Omega(p\lambda k))$, either the components
meeting $V(\mathcal C)$ grow beyond size $9k/17$, or they reach a
vertex outside $\varphi(V(\mathcal C))$. Since
$\mathcal C$ contains at most $64k/(pd)$ components, there are at most
$n^{64k/(pd)}=\exp(64k\log n/(pd))$ possible choices for
$\mathcal C$. As $p^2\lambda d=\omega(\log n)$, this is
$\exp(o(p\lambda k))$, which will be negligible compared with the
sprinkling probability. We may also
assume that $k\geq pd/64$, since otherwise no such collection
$\mathcal C$ exists. Let $\mathcal C'$ be the collection of components
of $G[V_{p/2}]$ meeting $V(\mathcal C)$, and let
$\mathcal A_{\mathcal C}$ be the event that either
$|V(\mathcal C')|>9k/17$ or
$V(\mathcal C')\not\subseteq\varphi(V(\mathcal C))$. Recall that
$V_{p/2}=V_{p/4}\cup V_{p'}$, where $V_{p'}$ is independent of
$V_{p/4}$ and $p'=p/(4-p)\geq p/4$. There are two cases, according
to whether $\varphi(V(\mathcal C))$ is already very large.

Suppose first that $|\varphi(V(\mathcal C))|\geq99n/100$. Every vertex
of $\varphi(V(\mathcal C))\setminus V(\mathcal C)$ lies outside
$V_{p/4}$, since otherwise it would already lie in a component of
$G[V_{p/4}]$ belonging to $\mathcal C$. Thus every such vertex which
is sprinkled joins one of the components in $\mathcal C'$. Since
$|V(\mathcal C)|\leq9k/17\leq27pn/85$, the expected number of these
newly sprinkled vertices is at least
$p'(99n/100-27pn/85)\geq11pn/50$, where we used that $p'=p/(4-p)$. A Chernoff bound therefore shows
that, with probability $1-\exp(-\Omega(pn))$,
$|V(\mathcal C')|\geq k/5+pn/5\geq k/5+k/3>9k/17$. Thus, when
$\varphi(V(\mathcal C))$ is very large, sprinkling simply makes the
resulting component collection too large.

We may therefore assume that
$|\varphi(V(\mathcal C))|<99n/100$. In this case there are many
disjoint ways to escape $\varphi(V(\mathcal C))$. Since every vertex
of $V(\mathcal C)$ has degree at least $d$, while every vertex of $G$
has at most $2pd$ neighbors in $V(\mathcal C)$, counting incidences
gives $d|V(\mathcal C)|\leq2pd|\varphi(V(\mathcal C))|$, and hence
$|\varphi(V(\mathcal C))|\geq|V(\mathcal C)|/(2p)$. Applying
Lemma~\ref{lem: matching in expander} to
$\varphi(V(\mathcal C))$, we obtain a matching $M$ between
$\varphi(V(\mathcal C))$ and its complement of size at least
$\lambda|V(\mathcal C)|/(800p)=\Omega(\lambda k/p)$. If both
endpoints of some edge of $M$ belong to $V_{p'}$, then the endpoint
inside $\varphi(V(\mathcal C))$ becomes connected to
$V(\mathcal C)$, while the other endpoint lies outside
$\varphi(V(\mathcal C))$, so
$V(\mathcal C')\not\subseteq\varphi(V(\mathcal C))$. Since the edges
of $M$ are disjoint, these events are independent, and therefore the
probability that no edge of $M$ has both endpoints in $V_{p'}$ is at
most $(1-p'^2)^{|M|}\leq\exp(-\Omega(p\lambda k))$.

Thus, in either case,
$\mathbb P(\mathcal A_{\mathcal C}\mid V_{p/4})
\geq1-\exp(-\Omega(p\lambda k))$. The number of possible choices of
$\mathcal C$ contributes only $\exp(o(p\lambda k))$, so a union
bound gives a conditional failure probability at most
$\exp(-cp\lambda k)$ for some absolute constant $c>0$. Choosing $\varepsilon>0$ sufficiently small proves the lemma.
\end{proof}

\begin{proof}[Proof of Theorem~\ref{thm: regular expander subsampling}]
    Fix a sufficiently small $\varepsilon>0$. If the expansion assertion failed, Lemma~\ref{lem: expander thinning} would give some $k$ for which the obstruction has probability at least $2^{-80\varepsilon p\lambda k}$, contradicting Lemma~\ref{lem: expander sprinkling}. 
\end{proof}

\begin{corollary}\label{cor: expander to tough subsampling}
    The following holds for every absolute constant $\varepsilon>0$. Let $\lambda\leq 1$, let $G$ be a $\lambda$-expander with minimum degree $d$ and maximum degree $(1+o(1))d$. Let $p\in[0,1]$ be such that $p^2\lambda d=\omega(\log n)$. Then, $G[V_p]$ is $(1-\varepsilon)$-tough with probability at least $1-2^{-\Omega(p^2\lambda d)}$.
\end{corollary}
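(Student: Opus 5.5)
We derive the corollary from Theorem~\ref{thm: regular expander subsampling} together with degree concentration, much as in the proof of Lemma~\ref{lem: vertex-transitive thinning}. We may assume $\varepsilon\leq1/4$. Consider the following two events.
\begin{itemize}
\item $\mathcal D$: every vertex of $G$ has between $(1-\varepsilon/6)pd$ and $(1+\varepsilon/6)pd$ neighbours in $V_p$, and $|V_p|\geq pn/2$.
\item $\mathcal X$: $G[V_p]$ is a $\mu$-expander with $\mu=c_0p\lambda$, as supplied by Theorem~\ref{thm: regular expander subsampling}.
\end{itemize}
Since $pd\geq p^2\lambda d=\omega(\log n)$ and the maximum degree is $(1+o(1))d$, Chernoff and a union bound give $\mathbb P(\overline{\mathcal D})\leq e^{-\Omega_\varepsilon(pd)}$. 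Hence both events hold with probability $1-2^{-\Omega(p^2\lambda d)}$. It then suffices to show, deterministically, that on $\mathcal D\cap\mathcal X$ the graph $H=G[V_p]$ is $(1-\varepsilon)$-tough.

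Suppose $S\subseteq V_p$ and $H-S$ has components $X_1,\dots,X_h$ with $h\geq2$. We must show $|S|\geq(1-\varepsilon)h$.

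\textbf{Step 1: remove the bad components.} As in Lemma~\ref{lem: vertex-transitive thinning}, call $X_i$ bad if some vertex of $X_i$ has more than $(1-\varepsilon/3)pd$ neighbours in $S$. By $\mathcal D$ we have $e(S,\bigcup X_i)\leq(1+\varepsilon/6)pd|S|$. So if $|S|<(1-\varepsilon)h$, the number of bad components is at most $(1+\varepsilon/6)|S|/(1-\varepsilon/3)$. This is a little less than $h$ but still close to it, so this step alone is not enough.

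\textbf{Step 2: good components are large.} Every vertex of a good $X_i$ has at least $\varepsilon pd/6$ neighbours inside $X_i$. Therefore each good component has size at least $\varepsilon pd/6$.

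\textbf{Step 3: apply expansion to the good components.} Let $g$ be the number of good components. By Step~1, if $|S|<(1-\varepsilon)h$ then $g\geq h-(1+\varepsilon/6)|S|/(1-\varepsilon/3)\geq c_\varepsilon h$.
\begin{itemize}
\item If some component has size greater than $|V_p|/2$, drop it; at least $g-1\geq g/2$ good components remain (for $g\geq 2$), each of size at most $|V_p|/2$.
\item Each remaining good component $X_i$ has $N_H(X_i)\subseteq S$ and $|N_H(X_i)|\geq\mu|X_i|\geq\mu\varepsilon pd/6=\Omega_\varepsilon(p^2\lambda d)$, which is $\omega(\log n)$ and in particular exceeds $1$.
\item These neighbourhoods overlap, so summing them is not immediate. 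Instead apply expansion to unions: take the union $Y$ of a subfamily of good components of total size at most $|V_p|/2$. Then $|S|\geq|N_H(Y)|\geq\mu|Y|\geq\mu(\varepsilon pd/6)\cdot\#\{\text{components in }Y\}$.
\item By greedily grouping, either the good components of size at most $|V_p|/2$ fit into one union of size at most $|V_p|/2$, or half of them do. Either way $|S|\geq\Omega(p^2\lambda d)\,g\geq g\geq c_\varepsilon h$ up to constants, which in fact exceeds $h$ since $p^2\lambda d=\omega(1)$. This contradicts $|S|<(1-\varepsilon)h$.
\end{itemize}

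\textbf{Remaining case.} When $g<2$, the conclusion of Step~1 forces $h$ to be small. Then $|S|<h=O_\varepsilon(1)$, so every vertex has at most $|S|<(1-\varepsilon/3)pd$ neighbours in $S$. Hence every component is good and $g=h\geq2$, contradicting $g<2$.

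The main obstacle is the grouping argument in Step~3. One must pack the good components of size at most $|V_p|/2$ into a single set of size at most $|V_p|/2$ while keeping a constant fraction of them. Sorting the components by size and taking the smallest ones until the threshold $|V_p|/2$ is reached keeps at least half of them. This makes the argument work, and the huge factor $\Omega(p^2\lambda d)$ absorbs all the constant losses.
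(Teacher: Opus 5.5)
Your proof is correct and follows essentially the same route as the paper. You combine Theorem~\ref{thm: regular expander subsampling} with Chernoff degree concentration. You bound one class of components of $G[V_p]-S$ by counting edges into $S$, and the remaining (necessarily large) components by applying expansion to the union of the smaller half of them. The differences are cosmetic. The paper splits components by size ($\le\delta$, each sending at least $x(\delta-x+1)\ge\delta$ edges to $S$, versus $>\delta$) and first shows $|S|=\omega(1)$. You instead split by whether some vertex has more than $(1-\varepsilon/3)pd$ neighbours in $S$, and you handle the bounded-$h$ case separately by contradiction.
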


\begin{proof}
    Without loss of generality, let us assume that $\varepsilon\leq 1/10$. By Theorem~\ref{thm: regular expander subsampling}, $G[V_p]$ is an $\Omega(p\lambda)$-expander with probability at least $1-2^{-\Omega(p^2\lambda d)}$. Further, a Chernoff bound and a union bound over all vertices show all degrees of $G[V_p]$ are $(1\pm \varepsilon/4)pd$ with probability at least $1-2^{-\Omega(pd)}=1-2^{-\Omega(p^2\lambda d)}$. Therefore, these events happen simultaneously with probability at least $1-2^{-\Omega(p^2\lambda d)}$. Suppose this is the case. We show that $G[V_p]$ must also be $(1-\varepsilon)$-tough.

Denote the minimum and maximum degree of $G[V_p]$ by $\delta$ and $\Delta$ and note that $\Delta/\delta\leq 1+3\varepsilon/4$, where we use that $\varepsilon\leq 1/10$. We also have $p\lambda\cdot\delta=\Omega(p^2\lambda d)=\omega(1)$. Suppose a set $S\subseteq V_p$ disconnects $G[V_p]$. Either $|S| \geq \delta/2$ or the smallest component has size at least $\delta-|S| \geq \delta/2$. Then expansion gives $|S|\geq\Omega(p\lambda\cdot \delta)$, and hence $|S|=\omega(1)$ in both cases. Every component of size $x\leq\delta$ sends at least $x(\delta-x+1)\geq\delta$ edges to $S$. Thus, there are at most $\Delta|S|/\delta=(1+3\varepsilon/4)|S|$ such components.

Suppose that there are $r$ components of size larger than $\delta$ and let $U$ be the union of the smallest $r/2$ of them. Since $\delta (r-1)/2\leq |U|\leq |V_p|/2$, and external neighbours of $U$ lie in $S$, expansion gives $|S|\geq\Omega(p\lambda|U|)=\Omega(p\lambda\cdot\delta(r-1))$, implying $r=o(|S|)$, where we use that $|S|=\omega(1)$. Consequently, $G[V_p]-S$ has at most $(1+3\varepsilon/4+o(1))|S|$ components, proving that $G[V_p]$ is $(1-\varepsilon)$-tough.
\end{proof}

\section{Well-connected bipartite subgraphs}
In this section we give another application of the thinning--sprinkling technique and use it to prove Theorem~\ref{thm: bipartite}, which says that every $k$-connected graph with $k=\omega(\log n)$ contains a spanning bipartite subgraph that is $\Omega(k)$-connected. Let $G$ be a $k$-connected graph, let $A\subseteq V(G)$ and $V_{1/2}$ be two independent $1/2$-random subsets of $G$ and set $B=V(G)\setminus A$. We write
$H=G[A\cap V_{1/2},B\cap V_{1/2}]$ for the corresponding random bipartite graph. 

The proof follows the same general pattern as in previous sections. The first lemma is the thinning step. It says that if $G$ has no highly connected spanning bipartite subgraph, then, with non-negligible probability, the random graph $H$ either has several components or has a component which is connected to some vertex of $G$ only by few edges.

\begin{lemma}\label{lem: bipartite thinning}
Let $\varepsilon>0$, let $G$ be a $k$-connected graph and suppose that $\varepsilon k\geq 4$. If $G$ contains no spanning bipartite subgraph which is $\varepsilon k/2$-connected, then, with probability at least $2^{-2\varepsilon k}$, $H$ either has at least three connected components, or has a connected component $C$ such that some vertex of $G$ has fewer than $\varepsilon k$ neighbors in $C$.
\end{lemma}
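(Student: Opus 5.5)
The plan is to mimic the thinning lemmas of the earlier sections, with the deterministic failure being a small vertex cut in a \emph{spanning} bipartite subgraph. Fix an outcome of $A$ and put $B=V(G)\setminus A$. By hypothesis the spanning bipartite graph $G[A,B]$ is not $\varepsilon k/2$-connected. Since $n\geq k+1>\varepsilon k/2+1$, this means there is a set $S$ with $|S|<\varepsilon k/2$ such that $G[A,B]-S$ is disconnected. Among all such cuts I would choose one that is inclusion-minimal, so that every $s\in S$ has $G[A,B]$-neighbours in at least two components of $G[A,B]-S$. Let $X$ be one component of $G[A,B]-S$ and let $Y$ be the union of the others.

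The thinning step then runs exactly as in Lemma~\ref{lem: k-connected thinning}. Conditional on $A$, the set $V_{1/2}$ is a $1/2$-random set. With probability $2^{-|S|}$ it avoids $S$, and independently, with probability $1/4$, it contains two prescribed vertices $x\in X$ and $y\in Y$. On this event, every edge of $H$ is an edge of $G[A,B]-S$. Hence no component of $H$ meets both $X$ and $Y$, and $H$ has at least two components, one inside $X$ containing $x$ and one inside $Y$ containing $y$. This event has probability at least $2^{-|S|-2}\geq 2^{-\varepsilon k/2-2}\geq 2^{-2\varepsilon k}$, using $\varepsilon k\geq 4$. Since this bound holds for every outcome of $A$, it also holds after averaging over $A$. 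If $H$ has at least three components we are done.

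The remaining, and main, obstacle is the case where $H$ has exactly two components $C_X=X\cap V_{1/2}$ and $C_Y=Y\cap V_{1/2}$. Here I must exhibit a vertex of $G$ with fewer than $\varepsilon k$ neighbours in one of them. The intended witness is a vertex $s\in S$ (which lies outside $V_{1/2}$), or a vertex whose bipartite degree is small. The difficulty is that the statement counts \emph{all} $G$-neighbours in $C$, not only the $G[A,B]$-neighbours, and the non-bipartite neighbours inside a component are not controlled by the cut. I would try two routes, in this order.

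First, exploit the freedom in the thinning. Besides requiring $S\cap V_{1/2}=\emptyset$, also require that a suitable vertex $w$ lies outside $V_{1/2}$ together with its small set of neighbours in one side. This is affordable only if that set has size $O(\varepsilon k)$. So I would pick $w$, and the side $X$ or $Y$, so that this holds, for instance by choosing $X$ to be a smallest component, or by swapping the roles of $X$ and $Y$.

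Second, if no such choice exists, every vertex has many neighbours on both sides. In that case I would add vertices to $S$ and re-partition, so as to force a third component of $H$, again at cost $2^{-O(\varepsilon k)}$. I expect this case analysis to be where the real work lies. I am not certain that the first route alone suffices, so the fallback of producing a third component should be kept available.
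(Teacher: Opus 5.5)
There is a genuine gap: your proposal stops exactly where the argument has to happen, and neither of your two routes can work as stated. The reason is that you use the hypothesis only once, to obtain the small cut $S$ of $G[A,B]$. Consider the configuration in which $G[A,B]-S$ has exactly two components, every vertex of $G$ has at least $\varepsilon k$ neighbours in each of them, and both are well connected. Then there is no low-degree witness, so your first route fails. (A vertex of $S$ has no reason to have few neighbours on either side.) Nothing you have derived forces a further small cut either, so your second route has no justification.

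The missing idea is to use the hypothesis a \emph{second} time, through a reassignment of sides. Let $C$ be a largest component of $G[A,B]-S$. Suppose the subgraph of $G[A,B]$ induced by $C$ is $\varepsilon k/2$-connected and every vertex of $G$ has at least $\varepsilon k$ neighbours in $C$. Keep the bipartition of $C$ inherited from $A\cup B$, and put each vertex outside $C$ on the side where it has fewer neighbours in $C$. Each such vertex then has at least $\varepsilon k/2$ neighbours in $C$ across the new bipartition. Adding such vertices to an $\varepsilon k/2$-connected graph preserves $\varepsilon k/2$-connectivity. This yields a spanning bipartite $\varepsilon k/2$-connected subgraph of $G$, contradicting the assumption. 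Hence, for every outcome of $A$, one of the following holds:
\begin{itemize}
\item $G[A,B]-S$ has at least three components;
\item some vertex $x$ has fewer than $\varepsilon k$ neighbours in $C$;
\item $C$ has a cut $S'$ in $G[A,B]$ with $|S'|<\varepsilon k/2$. This is meaningful because in this case $|C|\geq\varepsilon k$.
\end{itemize}

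Each case is converted by an explicit thinning event chosen according to this deterministic configuration. Do not split on the random outcome of $H$, as you do with ``if $H$ has exactly two components''.
\begin{itemize}
\item In the first case, avoid $S$ and keep one vertex from each of three components.
\item In the second case, avoid $S$ and keep a single $u\in C$.
\item In the third case, avoid $S\cup S'$, and keep $u,v$ in different components of $C-S'$ together with $w$ in the other component of $G[A,B]-S$. This costs $2^{-|S|-|S'|-3}\geq 2^{-2\varepsilon k}$ since $\varepsilon k\geq 4$.
\end{itemize}
In the second case your worry about non-bipartite neighbours is unfounded. Once $V_{1/2}\cap S=\emptyset$, the component of $u$ in $H$ is contained in $C$. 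So by monotonicity $x$ has at most as many $G$-neighbours in that component as in $C$, that is, fewer than $\varepsilon k$. No further vertices need to be excluded from $V_{1/2}$.
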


\begin{proof}
Condition on the bipartition $A\cup B=V(G)$, and write $G'=G[A,B]$. By assumption, $G'$ is not $\varepsilon k/2$-connected, so let $S\subseteq V(G)$ with $|S|<\varepsilon k/2$ be a cut-set of $G'$, and let $C$ be a largest component of $G'-S$.

Suppose first that $G'-S$ has exactly two components, that $C$ is $\varepsilon k/2$-connected, and that every vertex of $G$ has at least $\varepsilon k$ neighbors in $C$. In this case, keep the bipartition of $C$ inherited from $A\cup B$, and place each vertex outside $C$ on the side on which it has fewer neighbors in $C$. Every such vertex then has at least $\varepsilon k/2$ neighbors across the bipartition in $C$. Since adding a vertex with at least $r$ neighbors to an $r$-connected graph preserves $r$-connectivity, this gives a spanning bipartite $\varepsilon k/2$-connected subgraph of $G$, a contradiction.

Thus, at least one of the following three possibilities occurs. First, $G'-S$ has at least three components. In this case, if $V_{1/2}$ avoids $S$ and contains one fixed vertex from each of three components, then $H$ has at least three components. This happens with probability at least $2^{-|S|-3}\geq 2^{-2\varepsilon k}$. Second, some vertex $x\in V(G)$ has fewer than $\varepsilon k$ neighbors in $C$. Fix $u\in C$. If $V_{1/2}$ avoids $S$ and contains $u$, then the component of $u$ in $H$ is contained in $C$, and hence $x$ has fewer than $\varepsilon k$ neighbors in that component. This happens with probability at least $2^{-|S|-1}\geq 2^{-2\varepsilon k}$. Finally, $G'-S$ has exactly two components but $C$ is not $\varepsilon k/2$-connected. Since we may assume that every vertex has at least $\varepsilon k$ neighbors in $C$, it follows that $|C|\geq \varepsilon k$. Therefore, we can choose a cut-set $S'\subseteq C$ with $|S'|<\varepsilon k/2$ and vertices $u,v$ in distinct components of $C-S'$, together with a vertex $w$ in the other component of $G'-S$. If $V_{1/2}$ avoids $S\cup S'$ and contains $u,v,w$, then $H$ has at least three components. This happens with probability at least $2^{-|S|-|S'|-3}\geq 2^{-2\varepsilon k}$. The lemma follows.
\end{proof}

We next show that the obstruction produced by Lemma~\ref{lem: bipartite thinning} is exponentially unlikely. Together with the thinning lemma, this will immediately yield the desired spanning bipartite subgraph.

\begin{lemma}\label{lem: bipartite sprinkling}
There exists $\varepsilon>0$ such that the following holds whenever $k=\omega(\log n)$. With probability larger than $1-2^{-2\varepsilon k}$, $H$ has at most two connected components and every vertex of $G$ has at least $\varepsilon k$ neighbors in each component of $H$.
\end{lemma}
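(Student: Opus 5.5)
The plan is to run the thinning--sprinkling template of Section~\ref{sec: connectivity}, but with a twist. Each vertex entering $V_{1/2}$ also carries an independent uniformly random side, $A$ or $B$, and an edge of $G$ is usable only if its endpoints lie on opposite sides. Set $U_0=V_{1/4}$ and write $V_{1/2}=U_0\cup V_{p'}$ with $p'=1/3$. Split the sprinkled set into $m=\Theta(k)$ rounds of $q$-random sets $U_1,\ldots,U_m$ with $q=\Theta(1/k)$, plus a final constant-density round $U_{m+1}$. We work throughout with components of the bipartite graphs $H_i=G[W_i\cap A,\,W_i\cap B]$. The components of $H[U_0]$ have size $\Omega(k)$ with very high probability, since each vertex has $\Omega(k)$ neighbours of the opposite side in $U_0$ by Chernoff and $k=\omega(\log n)$.

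Two features must be proved: at most two components, and every vertex having at least $\varepsilon k$ neighbours in each component. Both are organised as growth statements handled by Lemma~\ref{lem: minimal vertex}.

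\textbf{Degree property.} Fix a vertex $x$ and a seed $v\in U_0$. In a round where $x$ has fewer than $2\varepsilon k$ neighbours in $C_i(v)$, Menger's theorem applied to $x$ and $C_i(v)$ gives $k$ internally disjoint paths. At least $k/2$ of them have length at least $3$ after discarding those through the few neighbours of $x$ in $C_i(v)$, and each begins with a segment $a\,y\,z$ with $y\in N_G(C_i(v))$ and $z$ unexplored. With probability $q\cdot\tfrac14\cdot\tfrac14=\Theta(1/k)$, $y$ is sprinkled, $z\in U_0$, and the random sides alternate along $a,y,z$. In that case the $U_0$-part of $C_i(v)$ grows. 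So each such round succeeds with probability at least $1/2$. Stochastic domination by $\operatorname{Bin}(m,1/2)$, a union bound over pairs $(x,v)$, and the doubling argument of Lemma~\ref{lem: minimal vertex} then show the following. Except with probability $2^{-\Omega(k)}$, every vertex of $G$ has at least $2\varepsilon k$ neighbours in every seeded component of $H_m$. A Chernoff bound keeps at least $\varepsilon k$ of these in the final components of $H$, which contain the seeded ones. Unseeded components of $H$ are excluded with high probability, since every vertex of $V_{1/2}$ has a neighbour of the opposite side in $U_0$.

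\textbf{At most two components.} This is the main obstacle: parity. $G$ itself may be bipartite, and then $H$ genuinely has two components. So the merging step cannot simply mimic Claim~\ref{clm: tough sprinkling claim 2}, because a length-$\ell$ path between fixed endpoints of sides $s,s'$ can lie in $H$ only if $\ell\equiv s+s'+1\pmod 2$. The key observation is that, by the degree property, every vertex $x$ of $G$ has at least $2\varepsilon k$ neighbours in each seeded component $C$ of $H_m$. Since $C$ is connected in $H$, its vertices are properly $2$-coloured by their sides. Consider any three distinct seeded components $C_1,C_2,C_3$, and fix a sprinkled vertex $x$ landing in $U_{m+1}$ with a random side $\sigma$. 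The vertex $x$ joins $C_j$ as soon as it has a neighbour in $C_j$ on side $1-\sigma$. Call $C_j$ \emph{$\sigma$-attached} at $x$ if $x$ has at least $\varepsilon k$ neighbours in $C_j$ of side $1-\sigma$. For each $j$, the vertex $x$ is $A$-attached or $B$-attached to $C_j$, since its $2\varepsilon k$ neighbours split between the two sides. By pigeonhole, two of the three components are attached for the same $\sigma$ at $x$.

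We then need $\Omega(k)$ such vertices $x$ with disjoint witnesses. They come from Lemma~\ref{lem: menger for tough}-style or Menger path systems: the $k/2$ internally disjoint short paths between the components, together with the first vertex of each, supply $\Omega(k)$ disjoint candidate vertices adjacent to all three components. Each such $x$, if sprinkled with the right side (probability $\Omega(1)$), merges two of $C_1,C_2,C_3$ independently of the others. The failure probability is therefore $2^{-\Omega(k)}$. A union bound over $n^3$ triples, justified because $k=\omega(\log n)$, shows that $H$ has at most two seeded, hence at most two, components. I expect the delicate part to be producing these $\Omega(k)$ disjoint common neighbours of three components from the degree property. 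I would first try to get them directly by counting each component's $2\varepsilon k$-neighbourhoods through disjoint Menger paths.

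Finally, we couple $U_1\cup\cdots\cup U_{m+1}$ inside $V_{p'}$ as in Lemma~\ref{lem: k-connected sprinkling} and choose $\varepsilon$ small, so that the total failure probability is below $2^{-2\varepsilon k}$.
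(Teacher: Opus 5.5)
Your route differs from the paper's, and it has a genuine gap in the degree-property step. Your per-round estimate is copied from Claim~\ref{clm: connected sprinkling claim 1}. There, every $y\in N_G(C_i(v))$ is automatically outside $W_i$, so its membership in $U_{i+1}$ is fresh. In your setting $C_i(v)$ is a component of the bipartite graph $H_i$, so the exploration must reveal, for each $y\in N_G(C_i(v))$, whether $y\in W_i$ and, if so, its side. Every vertex of $N_G(C_i(v))\cap W_i$ has the same side as all its neighbours in $C_i(v)$, since otherwise it would belong to $C_i(v)$. Call such vertices dead. For a dead $y$, the event ``$y$ is sprinkled and the sides alternate along $a,y,z$'' has conditional probability $0$. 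Because its side is fixed, no later round can revive it.

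A constant fraction of $N_G(C_i(v))$ can be dead, and nothing prevents histories in which most or all second vertices of your fan are dead. In the extreme case $C_i(v)$ can never grow again. So the uniform bound $\mathbb P(\text{success}\mid\mathcal F_i)\geq 1/2$, which the domination by $\operatorname{Bin}(m,1/2)$ needs, fails. It is not clear how to control these histories separately, because the fan depends on $C_i(v)$.

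There is also a smaller, fixable slip. The fan may consist mostly of length-$2$ paths $x\,y\,a$ through common neighbours of $x$ and $C_i(v)$, and your discarding step does not remove these. Sprinkling such $y$ raises $d(x,C_i(v))$ but not the $U_0$-part, so the success event must also count these increments.

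The missing idea is to test each vertex only once, at first contact, with fresh randomness; this is exactly the paper's proof. Explore the $H$-component of $v$ by breadth-first search. Accept a newly reached $x$, reached from $y$, precisely when $x\in V_{1/2}$ and $x,y$ lie on opposite sides. Since predecessor edges form a tree, these tests are independent and each succeeds with probability $1/4$. So the accepted set is the $G[W]$-component of $v$ for a $1/4$-random $W$, and it lies inside the $H$-component of $v$.

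Theorem~\ref{thm: subsampling k-connected} with $p=1/4$ makes $G[W]$ connected. Hence every component of $H$ contains more than $n/5$ vertices, among which every vertex of $G$ has at least $k/8$ neighbours. Since $|V_{1/2}|<3n/5$, counting gives at most two components. The parity issue never arises, and no multi-round sprinkling is needed.

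Conversely, your three-component step would be fine once the degree property is available, and it is easier than you feared. Every vertex outside $W_m$ has at least $2\varepsilon k$ neighbours in each of $C_1,C_2,C_3$, so your pigeonhole applies to all of them. That gives $\Omega(n)$ independent chances to merge two of the three components, with failure probability $e^{-\Omega(n)}$. No Menger systems are needed, and the first vertices of connecting paths would not in general be adjacent to all three components anyway.
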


\begin{proof}
The main idea of the proof is to construct a $1/4$-random set $W$ so that the component of $v$ in $G[W]$ is contained in the component of $v$ in $H$, and then apply Theorem~\ref{thm: subsampling k-connected} to the set $W$. This will imply that every connected component of $H$ is large, and hence, by a simple counting argument, that $H$ has at most two connected components.

Fix $v\in V(G)$ and condition on $v\in V_{1/2}$. We construct the component of $v$ in $G[W]$ by a breadth-first exploration starting from $v$. Whenever the exploration reaches a new vertex $x$ from a vertex $y$ already in the explored component, we put $x$ in $W$ precisely when $x\in V_{1/2}$ and $x$ and $y$ lie on opposite sides of the random bipartition $A \cup B$. Since the predecessor edges form a tree, the corresponding color-change events are independent, and each newly exposed vertex is included with probability $1/2\cdot1/2=1/4$. Thus, apart from the harmless conditioning at $v$, this has the distribution of the component of $v$ in a $1/4$-random set $W$. After the exploration terminates, the membership of the remaining vertices in $W$ may be completed independently. Moreover, every predecessor edge used in the exploration belongs to $H$, so the component of $v$ in $G[W]$ is contained in the component of $v$ in $H$.

Applying Theorem~\ref{thm: subsampling k-connected} with sampling probability $1/4$, we obtain that $G[W]$ is connected with probability at least $1-2^{-c_0k}$ for some absolute constant $c_0>0$. Since $G$ is $k$-connected, every vertex has degree at least $k$. Since $k=\omega(\log n)$, by Chernoff bounds, for some absolute constant $c_1>0$, with probability at least $1-e^{-c_1k}$ we have $|W|>n/5$ and every vertex of $G$ has at least $k/8$ neighbors in $W$. It follows that, except with probability $e^{-c_2k}$ and as long as $\varepsilon\leq 1/8$, the component of $v$ in $H$ has more than $n/5$ vertices and every vertex of $G$ has at least $\varepsilon k$ neighbors in it, where $c_2>0$ is absolute.

A union bound over all $v\in V(G)$, together with the Chernoff bound $|V_{1/2}|<3n/5$ with probability $1-e^{-\Omega(n)}$, shows that for some absolute $c_3>0$, with probability at least $1-2^{-c_3 k}$ every component of $H$ has more than $n/5$ vertices and every vertex of $G$ has at least $\varepsilon k$ neighbors in each component. In particular, $H$ has at most two components. Finally, by decreasing $\varepsilon$ if necessary, we may assume that $2\varepsilon<c_3$.
\end{proof}

We can now finish the argument. If $G$ had no spanning bipartite $\varepsilon k/2$-connected subgraph, Lemma~\ref{lem: bipartite thinning} would imply that the obstruction in Lemma~\ref{lem: bipartite sprinkling} occurs with probability at least $2^{-2\varepsilon k}$, whereas Lemma~\ref{lem: bipartite sprinkling} bounds this probability strictly from above by $2^{-2\varepsilon k}$. This is a contradiction. Thus $G$ contains a spanning bipartite subgraph whose connectivity is a positive constant fraction of $k$, as claimed.

\section{Almost spanning cycles}
In this section we demonstrate how to use Theorems~\ref{thm: subsample tough}, \ref{thm: vertex-transitive subsampling} and Corollary~\ref{cor: expander to tough subsampling}. One key ingredient we use in conjunction with toughness is the following old result of Gallai.

\begin{proposition}[\cite{gallai64}]\label{thm: A-path}
    Let $G$ be a graph and $A\subseteq V(G)$. An $A$-path is a path of length at least $1$ with both endpoints in $A$. Then, $G$ either contains $k$ vertex-disjoint $A$-paths or there exists $Z\subseteq V(G)$ with $|Z|\leq 2k-2$ such that $G-Z$ contains no $A$-path.
\end{proposition}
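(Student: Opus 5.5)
The statement is Gallai's theorem on $A$-paths, cited from \cite{gallai64}. I would not reprove it from scratch inside the paper. The cleanest self-contained route is to derive it from the Tutte--Berge formula, via the standard reduction of $A$-paths to matchings, and then extract the $2k-2$ bound.

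\textbf{Reduction to a matching problem.} Build an auxiliary graph $G^*$ by taking two copies of $G-A$, joining each vertex $v\notin A$ to its twin $v'$, and attaching $A$ to both copies. Concretely, let $V(G^*)=A\cup (V\setminus A)\cup (V\setminus A)'$. Put the edges of $G$ on $A\cup (V\setminus A)$, put the copies of the edges of $G-A$ on $(V\setminus A)'$, add the edges from $A$ to $(V\setminus A)'$ mirroring those of $G$, and add the twin edges $vv'$. The key observation (Gallai's trick) is
\[
\nu(G^*)=|V\setminus A|+\max\#\{\text{disjoint }A\text{-paths}\}.
\]
One direction is direct: given $k$ disjoint $A$-paths, match consecutive edges along each path alternately in the two copies. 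Then use twin edges on all unused vertices outside $A$. This gives a matching of size $|V\setminus A|+k$. The other direction takes a maximum matching $M$ and looks at its symmetric difference with the perfect twin matching on $(V\setminus A)\cup(V\setminus A)'$. The resulting alternating components that gain an edge each project to an $A$-path in $G$, and these projected paths are disjoint.

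\textbf{Extracting $Z$.} Suppose $G$ has fewer than $k$ disjoint $A$-paths. Then $\nu(G^*)<|V\setminus A|+k$, and Tutte--Berge yields a set $X\subseteq V(G^*)$ with
\[
\mathrm{odd}(G^*-X)-|X|>|V(G^*)|-2|V\setminus A|-2k=|A|-2k.
\]
Let $Z$ be the projection of $X$ to $V(G)$, identifying $v'$ with $v$. Then $|Z|\le |X|$.

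\textbf{Main obstacle.} The real work is the choice of $X$. We need to take a Tutte--Berge barrier that is symmetric, or otherwise uncrossed, under the twin involution. We then show that each vertex of $A$ not in $Z$ lies in its own odd component of $G^*-X$, one that meets no other $A$-vertex. This is exactly the statement that $G-Z$ has no $A$-path. The parity count then gives $|Z|\le 2k-2$.

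To get there, I would first try to take $X$ to be the Gallai--Edmonds set $A(G^*)$, which is canonical and therefore invariant under the twin automorphism. From the Gallai--Edmonds structure, one then argues that components of $D(G^*)$ contain at most one $A$-vertex.

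Since the paper only uses this classical result as a black box, in context it suffices to cite \cite{gallai64}, or the treatment in Schrijver's book.
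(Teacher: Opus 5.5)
The paper gives no proof of this proposition; it only cites Gallai, so your closing remark matches what the paper does. Your reduction $\nu(G^*)=|V\setminus A|+\nu_A(G)$ is correct. So is taking the Gallai--Edmonds set $A(G^*)$ as a barrier invariant under the twin involution.

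The gap is the extraction step. Two of your claims are false:
\begin{itemize}
\item that every vertex of $A\setminus Z$ lies in its own component of $G^*-X$, i.e.\ that the projection $Z$ of a barrier already meets every $A$-path;
\item that components of $D(G^*)$ contain at most one $A$-vertex.
\end{itemize}
Take $G=K_3$, $A=V(G)$, $k=2$. Then $G^*=G$ and $D(G^*)=V(G)$. Moreover $X=\emptyset$ is the only set with $\mathrm{odd}(G^*-X)-|X|>|A|-2k$, yet $G$ has $A$-paths. There is also a structural reason the claim must fail. Write a twin-invariant $X$ as $X_A\cup Y\cup Y'$ with $X_A=X\cap A$. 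Your claim would give $\mathrm{odd}(G^*-X)=|A|-|X_A|$, and hence $|Z|=|X_A|+|Y|\le k-1$. This is refuted by $K_{2k-1}$ with $A=V(G)$, where every set meeting all $A$-paths has $2k-2$ vertices. So the factor $2$ in $2k-2$ cannot come from a plain projection.

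What is missing is a final augmentation of $Z$. Since $X$ is twin-invariant, each component $K$ of $G^*-X$ is closed under $v\leftrightarrow v'$. Hence $|K|\equiv a_K:=|K\cap A|\pmod 2$. Projection identifies these components with the components of $G-Z$, preserving their $A$-vertices. Substituting $|X|=|X_A|+2|Y|$ and $|A|=|X_A|+\sum_K a_K$ into $\mathrm{odd}(G^*-X)-|X|\ge|A|-2k+2$ gives
\[
|Z|+\sum_K\lfloor a_K/2\rfloor\le k-1 .
\]
Now let $Z'$ consist of $Z$ together with all but one $A$-vertex of each component $K$. Then $G-Z'$ has no $A$-path. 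Since $\max\{a-1,0\}\le 2\lfloor a/2\rfloor$, we get $|Z'|\le|Z|+2\sum_K\lfloor a_K/2\rfloor\le 2k-2$.
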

This readily translates into a very useful connecting lemma for tough graphs.
\begin{lemma}\label{lem: A-connecting}
    Let $G$ be a $t$-tough graph and $A\subseteq V(G)$ with $|A|\geq 2$. Then, $G$ contains at least $\min\{1,t\}\cdot |A|/4$ vertex-disjoint $A$-paths.
\end{lemma}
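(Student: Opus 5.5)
Apply Gallai's theorem (Proposition~\ref{thm: A-path}) with $k=\lceil \min\{1,t\}|A|/4\rceil$. If $G$ contains $k$ vertex-disjoint $A$-paths, we are done. Otherwise there is a set $Z\subseteq V(G)$ with $|Z|\leq 2k-2$ such that $G-Z$ contains no $A$-path. We will show that such a $Z$ is impossible by using toughness.

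Since $G-Z$ has no $A$-path, every component of $G-Z$ contains at most one vertex of $A\setminus Z$. Indeed, two vertices of $A$ in the same component would be joined by a path, and a shortest such path is an $A$-path. Consequently $G-Z$ has at least $|A\setminus Z|\geq |A|-|Z|$ components. We also have $|Z|\leq 2k-2<\min\{1,t\}|A|/2\leq |A|/2$, so $G-Z$ has more than $|A|/2$ components.

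To apply toughness we need at least two components. If $|A|\geq 4$, then $|A|-|Z|>|A|/2\geq 2$, which suffices. For $|A|\in\{2,3\}$ the target $\min\{1,t\}|A|/4$ is at most $3/4$, so we only need one $A$-path. Since $G$ is connected (toughness is meant for connected graphs, and complete graphs are trivially fine), a shortest path between two vertices of $A$ is an $A$-path. In fact, interpreting the bound as $\lceil\cdot\rceil$, we get $k=1$, hence $Z=\emptyset$, and $G$ itself would have to have no $A$-path. That is absurd when $G$ is connected.

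In general, $G-Z$ has $c\geq \max\{2,|A|-|Z|\}$ components. If $G$ is complete, we can simply pair up vertices of $A$ with edges, giving $\lfloor |A|/2\rfloor\geq |A|/4$ disjoint paths. Otherwise $G$ is $t$-tough, so $|Z|\geq tc\geq t(|A|-|Z|)$, which gives $|Z|\geq \frac{t}{1+t}|A|\geq \frac{\min\{1,t\}}{2}|A|$. But $|Z|\leq 2k-2<\min\{1,t\}|A|/2$, a contradiction.

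The only delicate point is rounding: $k$ must be an integer with $2k-2<\min\{1,t\}|A|/2$. Taking $k=\lceil \min\{1,t\}|A|/4\rceil$ gives $2k-2<\min\{1,t\}|A|/2$, as required. The main thing to verify is therefore that the toughness hypothesis applies, namely that there are at least two components. This holds whenever $Z$ is nonempty with $|A\setminus Z|\geq 2$, and the degenerate cases are covered by the connectivity argument above.
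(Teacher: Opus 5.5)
Your proof is correct and follows the paper's argument: apply Gallai's theorem with $k=\lceil \min\{1,t\}|A|/4\rceil$ to get $Z$ with $|Z|<\min\{1,t\}|A|/2$, observe that $G-Z$ has at least $|A\setminus Z|$ components, and contradict toughness. Your separate treatment of $|A|\in\{2,3\}$ is harmless but unnecessary, since the number of components satisfies $c\geq |A|-|Z|>|A|/2\geq 1$, which already forces $c\geq 2$.
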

\begin{proof}
    Set $t'=\min\{1,t\}$.
    Suppose that $G$ does not contain $t'|A|/4$ vertex-disjoint $A$-paths. By Proposition~\ref{thm: A-path}, there exists $Z\subseteq V(G)$ with $|Z|< t'|A|/2$ such that $G-Z$ contains no $A$-path. It follows that $G-Z$ has at least $|A\setminus Z|>|A|-t'|A|/2\geq |A|/2$ components. Since $G$ is $t'$-tough, it follows that $|Z|\geq t'\cdot |A|/2$, contradicting our earlier observation that $|Z|<t'|A|/2$.
\end{proof}
Using this, we obtain the following key tool for creating long cycles.
\begin{lemma}\label{lem: long cycle}
    Let $G$ be a $t$-tough graph on $n$ vertices for some $0<t\leq 1$ and set $k=\log_{1-t/4} (1/n) +1$. Let $V_0,\ldots,V_{k}$ be disjoint subsets of $V(G)$ such that $G[V_0]$ contains a spanning linear forest $F$ and, for $1\leq i\leq k$, $G[V_i]$ is $t$-tough and $G$ contains a matching between the endpoints of $F$ and $V_i$ covering all endpoints of $F$. Then, $G$ contains a cycle spanning $V_0$.
\end{lemma}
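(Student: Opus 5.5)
The plan is to merge the paths of $F$ iteratively, using one tough set $V_i$ per round, until a single path remains, and then close that path into a cycle. Throughout we maintain a linear forest $F_i$ whose vertex set is $V_0$ together with some vertices of $V_1\cup\dots\cup V_i$. Its paths are in natural correspondence with the original paths of $F$, or with groups of them glued together, and in particular its endpoints are still endpoints of $F$. Write $f_i$ for the number of paths of $F_i$. The goal of round $i$ is
\[
f_i\leq(1-t/4)f_{i-1}.
\]
Since $f_0\leq n$ and $k-1=\log_{1-t/4}(1/n)$, after $k-1$ rounds we have $f_{k-1}\leq 1$, i.e.\ $F_{k-1}$ is a single path. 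The last set $V_k$ is then used to close it.

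For round $i$, fix the given matching $M_i$ from the endpoints of $F$ to $V_i$, restricted to the current endpoints of $F_{i-1}$. Let $A_i\subseteq V_i$ be the set of matched partners; it has size $2f_{i-1}$, counting a trivial one-vertex path as having two endpoints if the matching allows it, or treating it separately otherwise. Apply Lemma~\ref{lem: A-connecting} to the $t$-tough graph $G[V_i]$ with this set $A_i$. This gives at least $t|A_i|/4=tf_{i-1}/2$ vertex-disjoint $A_i$-paths inside $V_i$.

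Each such $A_i$-path $Q$ joins two endpoints $x,y$ of $F_{i-1}$ via the route $x\,M_i(x)\,Q\,M_i(y)\,y$. This route links two paths of $F_{i-1}$, or closes a cycle if $x,y$ are the two ends of the same path. To avoid cycles, consider the auxiliary multigraph whose vertices are the paths of $F_{i-1}$ and whose edges are the connecting routes. Each vertex of this multigraph has degree at most $2$, since each path has two endpoints and each endpoint is used at most once thanks to disjointness and $M_i$ being a matching. So it is a disjoint union of paths and cycles. Deleting one edge from every cycle, including loops, keeps a linear forest. The number of merges obtained equals the number of kept edges, which is at least half of the $A_i$-paths in the worst case of loops and $2$-cycles.

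The delicate part is the constant: with loops and $2$-cycles the count could drop to roughly $tf_{i-1}/4$ merges, which is exactly what is needed. The fix I would try first is to remove loops in advance, by first choosing for each path one endpoint only and then applying Lemma~\ref{lem: A-connecting} to a set of size $f_{i-1}$ on one side. The cleaner variant is to observe that each component of the auxiliary graph with $e$ edges yields at least $e/2$ merges, with equality only for loops. Either way, each round produces at least $tf_{i-1}/4$ merges, so $f_i\leq(1-t/4)f_{i-1}$. The new endpoints remain endpoints of $F$, so $M_{i+1}$ still applies, and disjointness of the sets $V_j$ keeps the forest linear.

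For the final step, let $P$ be the single path with ends $x,y$. Take $A=\{M_k(x),M_k(y)\}$, which has size $2$ since $M_k$ is a matching. Because $G[V_k]$ is $t$-tough it is connected, so it contains a path between these two vertices. Appending it to $P$ yields a cycle containing all of $V_0$.

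The main obstacle I expect is getting the $1-t/4$ rate, namely handling the loops and cycles in the merging multigraph so that at least a $t/4$ fraction of the paths are genuinely merged. Degenerate cases also need care: one-vertex paths of $F$, where both ends coincide and the matching covers only one partner, and the possibility that $|A_i|<2$.
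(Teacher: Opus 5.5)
Your fallback is exactly the paper's proof: choose one endpoint of each path of $F_{i-1}$, so that $|A_i|=f_{i-1}$, apply Lemma~\ref{lem: A-connecting} to $G[V_i]$, and close the final path through the connected graph $G[V_k]$. It also makes the cycle-breaking unnecessary. Every path of $F_{i-1}$ contributes exactly one vertex to $A_i$, and the $A_i$-paths are vertex-disjoint. So each $A_i$-path joins two \emph{distinct} paths, and each path is involved in at most one merge. The auxiliary graph is therefore a matching, all $tf_{i-1}/4$ paths are genuine merges, and $f_i\le(1-t/4)f_{i-1}$ follows at once. This version also handles one-vertex paths, since each path needs only one matched endpoint.

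You should commit to that version and discard the two-endpoint route, which has a real gap. Call an $A_i$-path a loop if it joins $M_i(x)$ and $M_i(y)$ for the two ends $x,y$ of the \emph{same} path of $F_{i-1}$. A loop is one edge of your auxiliary multigraph and yields \emph{zero} merges, not $1/2$. So your claim that a component with $e$ edges yields at least $e/2$ merges, ``with equality only for loops'', is false. Lemma~\ref{lem: A-connecting} is purely existential, so nothing prevents all $tf_{i-1}/2$ returned paths from being loops. For example, if each pair $M_i(x),M_i(y)$ is adjacent in $G[V_i]$, these edges form a valid family of disjoint $A_i$-paths. In that case the round makes no progress at all. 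Hence the sentence ``Either way, each round produces at least $tf_{i-1}/4$ merges'' holds only for the one-endpoint version.
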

\begin{proof}
    We prove by induction that $G[V_0\cup\ldots\cup V_i]$ contains a linear forest $F_i$ containing $V_0$ whose endpoints are also endpoints of $F$ with at most $(1-t/4)^in$ components. The base case holds by choosing $F_0=F$. Suppose we have proven the statement up to $i-1$ and let $F_{i-1}$ be such a linear forest. If $F_{i-1}$ is already a path, set $F_i=F_{i-1}$. Otherwise, for each path in $F_{i-1}$ let us choose one endpoint arbitrarily. We denote this set by $W$ and note that $F_{i-1}$ contains exactly $|W|$ components, so $|W|\leq (1-t/4)^{i-1}n$ by the induction hypothesis. Since the endpoints of $F_{i-1}$ are also endpoints of $F$, there is a matching $M$ in $G$ between $W$ and $V_i$ covering $W$. Let $A$ be the set of endpoints of edges of $M$ which are in $V_i$. By Lemma~\ref{lem: A-connecting}, $G[V_i]$ contains a linear forest $T$ of $t|A|/4=t|W|/4$ vertex-disjoint $A$-paths. Combining these paths with edges of $M$, we obtain a linear forest $F_i$ in which $t|W|/4$ pairs of paths from $F_{i-1}$ are merged. Therefore $F_i$ is a linear forest with at most $\max\{1,(1-t/4)|W|\}\leq \max\{1,(1-t/4)^{i}n\}$ components, whose endpoints are a subset of endpoints of $F_{i-1}$.

    Hence, $F_{k-1}$ is a path spanning $V_0$ whose endpoints are endpoints of $F$. Therefore, $G$ contains a matching into $V_{k}$ covering these two endpoints. Since $G[V_k]$ is connected, there exists a cycle in $G$ which contains $F_{k-1}$ and hence, $V_0$ as well.
\end{proof}

\subsection{Long cycles in tough graphs}
\begin{proof}[Proof of Theorem~\ref{thm: tough cycle}]
    We show that for every absolute constant $0<\varepsilon\leq 1/2$, $G$ contains a cycle of length at least $(1-2\varepsilon)n$. Set $k=\log_{3/4} (1/n)+1$ and $p=\varepsilon/k$. Independently for each vertex, assign it to one of $V_0,\ldots,V_k$, choosing $V_0$ with probability $(1-\varepsilon)$ and, for $1\leq i\leq k$, $V_i$ with probability $\varepsilon/k$. 
    
    We keep track of several events. For every $1\leq i\leq k$, let $\mathcal A_i$ be the event that $G[V_i]$ is $1$-tough. Since $p^2t=\omega(\log n)$, by Theorem~\ref{thm: subsample tough} and a union bound,
    these events hold simultaneously with probability at least $1-n\cdot 2^{-\omega(\log n)}=1-o(1)$. Next up, for any independent set $I\subseteq V(G)$ of $G$ and for $1\leq i\leq k$, let $\mathcal B_{I,i}$ be the event that $V_i$ contains at least $2|I|$ neighbors of $I$. Since $G$ is $t$-tough, we have that $|N(I)|\geq t|I|$, unless $G$ is the complete graph in which case the statement follows trivially. Therefore, by Chernoff's bound, we get that $\mathcal B_{I,i}$ happens with probability at least $1-e^{-\Omega(t|I|/\log n)}=1-e^{-\omega(\log n\cdot |I|)}$. But the number of choices of $(I,i)$ with $I$ of size $\ell$, for any $\ell$, is at most $n^\ell k\leq e^{O(\ell\log n)}$, so that by a union bound, $\mathcal B_{I,i}$ holds with high probability for all choices of $I$ and $i$. Finally, let $\mathcal C$ be the event that $V_0$ contains at least $(1-2\varepsilon)n$ vertices, which again happens with high probability by a Chernoff bound. Therefore, we have that all $\mathcal A_i$, $\mathcal B_{I,i}$ and $\mathcal C$ happen simultaneously with probability $1-o(1)$. For the remainder of the argument, let us assume that $V_0,V_1,\ldots, V_{k}$ is a partition for which all these events are satisfied.

    Let $F$ be a spanning linear forest of $G[V_0]$ minimizing its number of components. Let $X$ be the set of endpoints of $F$. By minimality of $F$, $G[X]$ can contain only the matching of edges connecting the pair of endpoints of the same paths of $F$. Thus for every $U\subseteq X$, there exists an independent set $I\subseteq U$ of at least half the size of $U$. For every $i$, it follows from $\mathcal B_{I,i}$ that $U$ has at least $|U|$ neighbors in $V_i$. As this is exactly Hall's condition, $G$ contains a matching between $X$ and $V_i$ which covers $X$, for every $i$. Hence, by Lemma~\ref{lem: long cycle}, $G$ contains a cycle which spans $V_0$. It follows from $\mathcal C$ that this cycle has length at least $(1-2\varepsilon)n$.
\end{proof}
\subsection{Long cycles in robustly tough nearly regular graphs}
In this section we prove the following theorem together with its applications to vertex-transitive and expander graphs.
\begin{theorem}\label{thm: weak meta theorem}
    Let $G$ be a graph on $n$ vertices with minimum degree
    $d=\omega(\log^3 n\,\log\log n)$ and maximum degree
    $(1+o(1))d$. Suppose that $G[V_p]$ is $1/2$-tough
    with probability at least $1-n^{-2}$ whenever
    $p=\Omega(1/\log n)$. Then $G$ contains a cycle of length
    $(1-o(1))n$.
\end{theorem}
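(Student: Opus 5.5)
We follow the template of the proof of Theorem~\ref{thm: tough cycle}, with Lemma~\ref{lem: long cycle} as the engine (applied with $t=1/2$). Fix a constant $\varepsilon>0$ and set $k=\log_{7/8}(1/n)+1=\Theta(\log n)$. We randomly partition $V(G)$ into $V_0,V_1,\ldots,V_k$, putting each vertex into $V_0$ with probability $1-\varepsilon$ and into each $V_i$, $i\geq1$, with probability $p=\varepsilon/k=\Theta(1/\log n)$. Each $V_i$ is then a $p$-random set with $p=\Omega(1/\log n)$. By hypothesis and a union bound over the $k=O(\log n)$ parts, every $G[V_i]$ is $1/2$-tough with probability $1-O(\log n/n^2)$. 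Chernoff gives $|V_0|\geq(1-2\varepsilon)n$ whp. Also whp, every vertex has $(1\pm o(1))pd$ neighbours in each $V_i$ and $(1\pm o(1))(1-\varepsilon)d$ in $V_0$; this uses $pd=\omega(\log^2 n\log\log n)$.

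The difference from the tough case is that we cannot use the neighbourhood expansion $|N(I)|\geq t|I|$ of independent sets to obtain Hall's condition for arbitrary endpoint sets. Instead we must choose the spanning linear forest $F$ of $G[V_0]$ so that its endpoint set $X$ is \emph{sparse} in a way compatible with near regularity. The natural choice is again a linear forest of $G[V_0]$ with the minimum number of components. By minimality, no two endpoints from different paths are adjacent, and the standard Pósa-type rotation argument gives more. Any endpoint $x$ has no neighbour that is an endpoint of another path. If $x$ were adjacent to an interior vertex $y$ whose path-neighbour is an endpoint of another path, rotation would also reduce the component count. So the set of endpoints is far from everything.

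What we actually need is Hall's condition for matching $X$ into each $V_i$: $|N(U)\cap V_i|\geq|U|$ for all $U\subseteq X$. Since vertices of $V_i$ have at most $(1+o(1))pd$ neighbours in $X$, double counting gives $|N(U)\cap V_i|\geq e(U,V_i)/\max_v d_X(v)$, and $e(U,V_i)\approx pd|U|$. It therefore suffices that no vertex has more than $(1-\delta)pd$ neighbours in $X\cap V_i$-relevant counts. Exposing the partition in two rounds avoids this circularity: reveal $V_0$ first, fix $F$ (hence $X$), and only then split $V(G)\setminus V_0$ into $V_1,\ldots,V_k$. But then $V_i$ is not independent of $V_0$, and $X$ is determined by $V_0$, which is fine. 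For a vertex $v$ to have too many neighbours in $V_i$ adjacent to $X$, we want instead to bound $d_X(w)$ for $w\in V_i$. Here we would use the extremality of $F$: if a vertex $w\notin V_0$ has many neighbours in $X$, that alone is no contradiction. So the real fix is to make the matching fractional: since each $w\in V_i$ has at most $(1+o(1))pd\cdot$(fraction) neighbours in $X$, and each $x\in X$ has $(1-o(1))pd$ neighbours in $V_i$, near regularity gives a fractional matching saturating $X$ provided every $w$ has $d_X(w)\le (1+o(1))\cdot$ the typical $d_{V_i}(x)$ ratio. Concretely, $d_{V_i}(x)\geq(1-o(1))pd$ while $d_{X\cap}$ counts into $V_i$ per $w$ are at most $\deg_{V_i}$-neighbourhood bounds, so we need $d_X(w)\leq(1-o(1))\cdot$... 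The cleanest route is to reserve, before choosing $F$, a random subset and require $|X|$ small: by $1/2$-toughness applied to $G[V_0]$-type arguments (Lemma~\ref{lem: A-connecting}), a minimum linear forest should have few components, $|X|=o(n/\log n)$ perhaps. Then whp every vertex has $o(pd)$ neighbours in $X$, and Hall follows. This last step — bounding the number of paths in an optimal spanning linear forest, presumably through repeated use of Lemma~\ref{lem: A-connecting} on random pieces and the $\log^3 n\log\log n$ degree bound — is the main obstacle.

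Given the matchings, Lemma~\ref{lem: long cycle} yields a cycle through $V_0$ of length at least $(1-2\varepsilon)n$. Letting $\varepsilon\to0$ completes the proof.
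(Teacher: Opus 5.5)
Your setup matches the paper's: the random partition into $V_0,V_1,\dots,V_k$ with $k=\log_{7/8}(1/n)+1$, $1/2$-toughness of every $G[V_i]$ from the hypothesis plus a union bound, and the final appeal to Lemma~\ref{lem: long cycle} with $t=1/2$. However, there is a genuine gap exactly at the step you flag: you never establish Hall's condition for matching the endpoint set $X$ into each $V_i$.

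Each $x\in X$ has only about $pd=\varepsilon d/k$ neighbours in $V_i$. A vertex of $V_i$, by contrast, can a priori have roughly $(1-\varepsilon)d$ neighbours in $X$. Double counting therefore works only if $X$ is \emph{locally} sparse, meaning every vertex of $G$ has fewer neighbours in $X$ than $\min_{x\in X}d_G(x,V_i)\approx pd$. Your proposed fix does not provide this, for two reasons.
\begin{itemize}
\item Nothing in your sketch bounds the number of paths in a minimum spanning linear forest of $G[V_0]$ by $o(n/\log n)$. This is a nontrivial path-cover statement of the same flavour as the theorem, and the rotation remarks do not imply it.
\item Even granting that bound, $X$ is a deterministic function of $G$ and $V_0$. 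A bound on $|X|$ gives no control on $\max_w d_G(w,X)$: all neighbours in $V_0$ of a single vertex could lie in $X$. No randomness remains that would force every vertex to have $o(pd)$ neighbours in $X$.
\end{itemize}
Apart from degree concentration, which needs only $pd=\omega(\log n)$, your argument never uses the threshold $d=\omega(\log^3 n\log\log n)$. That is a further sign that the essential step is missing.

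The missing idea is that $F$ need not span $V_0$. It suffices that $F$ covers $(1-o(1))|V_0|$ vertices, since Lemma~\ref{lem: long cycle} can be applied to $V(F),V_1,\dots,V_k$. The paper uses this freedom to force local sparsity of the endpoints, as follows.
\begin{itemize}
\item Using the Lov\'asz Local Lemma, it splits $V_0$ into $m=k/\gamma$ layers $W_1,\dots,W_m$, with $\gamma=\varepsilon/10$. Every vertex $v$ of $G$ then has $(1\pm\alpha)d_v$ neighbours in each layer, where $d_v=(1-\varepsilon)d_G(v)/m$ and $\alpha=C\sqrt{m\log d/d}$.
\item It keeps only edges between consecutive layers. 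Menger's theorem, in the form of Lemma~\ref{lem: paths through layers}, then gives vertex-disjoint $W_1$--$W_m$ paths covering $(1-2\alpha m-o(1))|V_0|$ vertices, with all endpoints in $W_1\cup W_m$.
\item Consequently every vertex of $V_i$ has at most $3d/m=3\gamma d/k$ neighbours in $X$, while every $x\in X$ has at least $\varepsilon d/(2k)$ neighbours in $V_i$.
\item Double counting gives $|N_G(U)\cap V_i|\geq\varepsilon|U|/(6\gamma)\geq|U|$ for all $U\subseteq X$, which is Hall's condition.
\end{itemize}
The assumption $d=\omega(\log^3 n\log\log n)$ is exactly what makes $m\alpha=C\sqrt{m^3\log d/d}=o(1)$ for $m=\Theta(\log n)$. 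This is what ensures the layered forest is almost spanning.
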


To prove Theorem~\ref{thm: weak meta theorem}, we first establish the
following useful auxiliary lemma. If every vertex has approximately the same degree into
the preceding and following parts, and these degrees are nearly equal
across vertices, it gives a collection of disjoint paths covering
almost all vertices, with all endpoints in the first and last parts.

\begin{lemma}\label{lem: paths through layers}
    Let $G$ be a graph on $n$ vertices with a partition
    $W_1,\ldots,W_m$, where $m\geq 2$ and $W_1$ is a largest
    part. Let $d>0$ and $0\leq\alpha\leq1/2$.
    Suppose that there are numbers $d_v\in[d,(1+o(1))d]$,
    $v\in V(G)$, such that, for every $1\leq i<m$,
    every vertex $v$ of $G[W_i,W_{i+1}]$ has degree
    $(1\pm\alpha)d_v$. Then $G$ contains a collection of
    vertex-disjoint paths with endpoints in $W_1\cup W_m$
    which cover at least $(1-2\alpha m-o(1))n$ vertices.
\end{lemma}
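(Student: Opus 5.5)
We build the paths from matchings between consecutive layers. For each $1\leq i<m$ let $H_i=G[W_i,W_{i+1}]$. Since every vertex $v$ of $H_i$ has degree between $(1-\alpha)d_v$ and $(1+\alpha)d_v$, with $d_v\in[d,(1+o(1))d]$, each $H_i$ is nearly regular. We find in each $H_i$ a matching $M_i$ covering almost all vertices of the smaller side, and in fact almost all vertices of both sides up to the size discrepancy. Then $M_1\cup\cdots\cup M_{m-1}$ is a linear forest: every vertex of $W_i$ with $1<i<m$ has degree at most $2$ in it, one edge to the left and one to the right. Each component is a path moving monotonically through the layers, $W_j,W_{j+1},\ldots,W_{j'}$. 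The price is that a path may start or end in a middle layer. To fix this, we delete every path that has an endpoint outside $W_1\cup W_m$. A surviving path has no endpoint in the middle, and since it is monotone it runs from $W_1$ to $W_m$.

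\textbf{Key step: fractional matchings.} Weight each edge $uv$ of $H_i$ by
\[
x_{uv}=\frac{1}{\max\{d_{H_i}(u),d_{H_i}(v)\}}\geq \frac{1}{(1+\alpha)(1+o(1))d}.
\]
This gives a fractional matching in which every vertex $v$ receives weight at least $(1-\alpha)d_v/((1+\alpha)(1+o(1))d)\geq 1-2\alpha-o(1)$. Since $H_i$ is bipartite, the matching polytope is integral. Hence there is an integral matching of size at least the total fractional weight. A counting argument then shows that the number of vertices of $W_i$ uncovered by $M_i$ on the side toward $W_{i+1}$ is at most about $2\alpha|W_i|+o(|W_i|)+\bigl(|W_i|-|W_{i+1}|\bigr)_+$.

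Pure size counting is not enough here. A vertex can also be uncovered in a perfect-weight situation because the two layers have different sizes. This is where the degree condition does the real work. By double counting the edges of $H_i$,
\[
\sum_{v\in W_i}d_v\approx\sum_{v\in W_{i+1}}d_v \quad\text{up to a factor } 1\pm 2\alpha .
\]
Since all $d_v$ agree up to a factor $1+o(1)$, this forces $|W_i|=(1\pm 3\alpha\pm o(1))|W_{i+1}|$. So consecutive layers have nearly equal sizes, and the discrepancy term is itself $O(\alpha|W_i|)+o(|W_i|)$. The fact that $W_1$ is a largest part lets us express all losses relative to $n\approx m|W_1|$ when needed.

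\textbf{Bookkeeping.} A vertex in a middle layer $W_i$ is an endpoint only if it is missed by $M_{i-1}$ or by $M_i$. Vertices of $W_1$ missed by $M_1$, and of $W_m$ missed by $M_{m-1}$, are isolated and also discarded. Summing the uncovered counts gives at most $\sum_i\bigl(2\alpha+o(1)\bigr)(|W_i|+|W_{i+1}|)$ bad endpoints, up to constants. Each bad endpoint kills one path, of length at most $m$, so the total number of discarded vertices is at most $m\cdot(\#\text{bad endpoints})$.

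\textbf{Main obstacle.} This last factor of $m$ is where I expect the trouble, and getting the bound $2\alpha m$ rather than $O(\alpha m^2)$ needs care. I would avoid it by charging differently. If we count vertices lost per layer rather than per path, the set of vertices in $W_i$ not lying on a full $W_1$-to-$W_m$ path is contained in the union, over $j$, of vertices uncovered by the matchings $M_j$, each propagated along its path. I would try to show that in layer $W_i$ at most $\sum_j(\text{uncovered in }H_j)\lesssim 2\alpha\,|W_1|\cdot m/m$ vertices are lost per layer. To do this I would use the fact that within a fixed layer each broken path contributes exactly one vertex, and that the number of broken paths is at most the total number of unmatched vertices, about $2\alpha n/m\cdot m$ in aggregate. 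Summing over the $m$ layers then gives $2\alpha m\cdot|W_1|\cdot(1+o(1))\leq(2\alpha m+o(1))n$ lost vertices, as required. Careful constants in the fractional matching bound, using $\alpha\leq 1/2$, should make this match $2\alpha m$ exactly.
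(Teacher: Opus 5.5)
There is a genuine gap, and it sits at the step you flagged as the main obstacle. Write $\eta$ for the spread in $d_v\in[d,(1+\eta)d]$. Your fractional matching only shows that $M_i$ misses at most $u_i\le(2\alpha+O(\eta))|W_i|$ vertices of $W_i$. The $\eta$-part of this loss is unavoidable for a single matching. Already for $\alpha=0$, take each $W_i=A_i\cup B_i$ with $|B_i|=(1+\eta)|A_i|$, $d_v=(1+\eta)d$ on $A_i$ and $d_v=d$ on $B_i$. Let $H_i$ consist of biregular graphs between $A_i,B_{i+1}$ and between $B_i,A_{i+1}$. Then every matching of $H_i$ misses an $\eta/(2+\eta)$-fraction of each side.

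Since you choose the $M_i$ one at a time and discard broken paths entirely, nothing stops the missed vertices of different layers from lying on different paths. Let $P_i$ be the number of paths from $W_1$ reaching $W_i$. The best your scheme can certify is $P_{i+1}\ge P_i-u_i$, hence coverage
$mP_m\ge m|W_1|-m\sum_i u_i\ge\bigl(1-2\alpha m-O(\eta m)\bigr)n$.
The $\alpha$-part matches the statement, but the degree-spread error is paid once per layer. $\eta m$ need not be $o(1)$: in the application $m=\Theta(\log n)$ while the maximum degree is only $(1+o(1))d$.

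Your proposed repair does not fix this. The loss within one layer is bounded only by the number of broken paths, which is of order $(\alpha+\eta)n$, not $2\alpha|W_1|$. Your final expression $2\alpha m|W_1|(1+o(1))$ also turns the additive $o(|W_i|)$ error from the matchings into a multiplicative one. That is illegitimate, since $\alpha$ may be $0$ or much smaller than $\eta$.

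The missing idea is a global argument that carries the weights $d_v$ through all layers. Then only the factor $(1\pm\alpha)$ is lost per layer, and $\eta$ is paid once. The paper does this with Menger's theorem. Discard non-consecutive edges and let $Z$ be any set meeting all $W_1$--$W_m$ paths. Let $R_i$ be the part of $W_i$ reachable from $W_1\setminus Z$, and put $D(U)=\sum_{v\in U}d_v$ and $\rho=(1+\alpha)/(1-\alpha)$. Counting edges gives $D(R_i)\le\rho\bigl(D(R_{i+1})+D(Z_{i+1})\bigr)$. Iterating yields $dn/m\le D(W_1)\le\rho^{m-1}D(Z)\le\rho^{m-1}(1+\eta)d|Z|$. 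Menger then gives $(1-2\alpha(m-1)-O(\eta))n/m$ disjoint $W_1$--$W_m$ paths, each with exactly $m$ vertices. In your language, the matchings must be chosen jointly, as the layers of one integral flow, rather than independently.
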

\begin{proof}
    Discard all edges which do not join consecutive parts, and
    set $\rho=(1+\alpha)/(1-\alpha)$. For $U\subseteq V(G)$,
    write $D(U)=\sum_{v\in U}d_v$. Let $Z$ be any set meeting
    every path from $W_1$ to $W_m$, and put $Z_i=Z\cap W_i$.
    Let $R_i$ consist of the vertices of $W_i$ reachable from
    $W_1\setminus Z$ in $G-Z$. Then $R_1=W_1\setminus Z_1$
    and $R_m=\varnothing$. Every edge from $R_i$ to $W_{i+1}$
    ends in $R_{i+1}\cup Z_{i+1}$, so counting these edges gives
    $D(R_i)\leq\rho\bigl(D(R_{i+1})+D(Z_{i+1})\bigr)$.
    Iterating this inequality yields
    \[
        D(W_1)\leq\sum_{i=1}^m\rho^{i-1}D(Z_i)
        \leq\rho^{m-1}(1+o(1))d|Z|.
    \]
    Since $W_1$ is largest, $D(W_1)\geq d|W_1|\geq dn/m$.
    Thus every such $Z$ has at least
    $(1-o(1))\rho^{-m+1}n/m\geq(1-2\alpha(m-1)-o(1))n/m$ vertices, where we used $\rho^{-1}\geq 1-2\alpha$. By Menger's theorem,
    there are at least this many vertex-disjoint paths from
    $W_1$ to $W_m$. Each contains at least $m$ vertices, so
    together they cover at least $(1-2\alpha(m-1)-o(1))n$
    vertices.
\end{proof}
\begin{proof}[Proof of Theorem~\ref{thm: weak meta theorem}]
    We show that for every absolute constant $0<\varepsilon<1/4$,
    $G$ contains a cycle of length at least $(1-2\varepsilon)n$.
    Set $\gamma=\varepsilon/10$, $k=\log_{7/8}(1/n)+1$,
    $p=\varepsilon/k$ and $m=k/\gamma$.

    Independently for each vertex, assign it to one of
    $V_0,\ldots,V_k$, choosing each $V_i$, $1\leq i\leq k$,
    with probability $p$ and $V_0$ with probability
    $1-\varepsilon$. Since $p=\Theta(1/\log n)$, our assumption
    and a union bound imply that all $G[V_i]$, $1\leq i\leq k$,
    are $1/2$-tough with probability $1-o(1)$. Since $pd=\omega (\log n)$,
    Chernoff's bound and a union bound show that, with
    probability $1-o(1)$, every vertex $v\in V(G)$ has at least
    $(1-o(1))pd$ neighbors in each $V_i$ and
    $(1\pm O(\sqrt{\log n/d}))(1-\varepsilon)d_G(v)$ neighbors in
    $V_0$. Another application of Chernoff's bound gives
    $|V_0|=(1-\varepsilon+o(1))n$ with high probability.
    Fix a partition satisfying all these properties.

    We next partition $V_0$ into $W_1,\ldots,W_m$. We require
    the degrees into these parts to be controlled for every
    vertex of $G$, including vertices outside $V_0$.
    Put, for every $v\in V(G)$, $d_v=(1-\varepsilon)d_G(v)/m=(1\pm o(1))(1-\varepsilon)d/m$ and
    $\alpha=C\sqrt{m\log d/d}$, where $C$ is a sufficiently
    large absolute constant. We claim that there is a partition
    satisfying
    \[
        d_G(v,W_j)=(1\pm\alpha)d_v
        \qquad\text{for every $v\in V(G)$ and $1\leq j\leq m$.}
    \]
    To see this, assign each vertex of $V_0$ independently and
    uniformly to one of the $m$ parts. For every $v\in V(G)$,
    the expected degree into $W_j$ is
    $d_G(v,V_0)/m=(1\pm O(\sqrt{\log n/d}))d_v$.
    The relative error here is $o(\alpha)$, since
    $\sqrt{\log n/d}/\alpha
    =O(\sqrt{\log n/(m\log d)})=o(1)$.
    Moreover, $\alpha^2d_v\geq C^2(1-\varepsilon)\log d$.
    Chernoff's bound therefore shows that, for sufficiently
    large $C$, the probability of violating the desired
    estimate for a fixed pair $(v,j)$ is at most $d^{-4}$.
    Each such bad event depends only on the assignments of
    the vertices in $N_G(v)\cap V_0$. Two events can therefore
    only depend on each other if their vertices have a common
    neighbor in $V_0$. There are at most $(1+o(1))d^2$ such vertices,
    each giving $m$ events, so every event is independent of
    all but at most $(1+o(1))md^2$ others. Since
    $d^{-4}(md^2+1)=o(1)$, the Lov\'asz Local Lemma gives the
    required partition. Relabel the parts so that $W_1$ is largest.

    The assumption that $d=\omega(\log^3 n\,\log\log n)$ together with $m = O(\log n)$ implies $m^3\log d/d=
    o(1)$. Hence $m\alpha=C\sqrt{m^3\log d/d}=o(1)$.
    Applying Lemma~\ref{lem: paths through layers} to $G[V_0]$, we obtain
    a linear forest $F$ covering at least
    $(1-2\alpha m-o(1))|V_0|=(1-o(1))|V_0|$
    vertices of $V_0$, whose paths have their endpoints in $W_1\cup W_m$.
    Let $X$ be the set of endpoints of $F$. Since
    $X\subseteq W_1\cup W_m$ and our degree estimates hold
    for every vertex of $G$, every vertex of each $V_i$
    has at most $2(1+\alpha)(1+o(1))d/m\leq 3d/m=3\gamma d/k$
    neighbors in $X$. Here we used $\alpha=o(1)$.
    On the other hand, every vertex of $X$ has at least
    $(1-o(1))pd\geq\varepsilon d/(2k)$ neighbors in each
    $V_i$.
    For any $A\subseteq X$, counting edges from $A$ to $V_i$
    therefore gives
    $|N_G(A)\cap V_i|\geq \varepsilon|A|/(6\gamma)\geq |A|$,
    where the last inequality follows from
    $\gamma=\varepsilon/10$.
    Thus Hall's condition holds, and for every $1\leq i\leq k$
    there is a matching from $X$ into $V_i$ covering $X$.

    We can now apply Lemma~\ref{lem: long cycle} to $V(F),V_1,\ldots,V_k$, with $t=1/2$. We obtain a cycle containing $V(F)$, and hence of length at least $(1-\varepsilon-o(1))n\geq(1-2\varepsilon)n$, as required.
\end{proof}

    \begin{proof}[Proof of Theorem~\ref{thm: vertex-transitive cycle} and Theorem~\ref{thm: expander cycle}]
        Under the given assumptions, we have that if $p=\Omega(1/\log n)$ then $p^2d=\omega(\log n)$ for Theorem~\ref{thm: vertex-transitive cycle} and $p^2\lambda d=\omega(\log n)$ for Theorem~\ref{thm: expander cycle}. Therefore we can use Theorem~\ref{thm: vertex-transitive subsampling} and Corollary \ref{cor: expander to tough subsampling} to deduce that $G[V_p]$ is $1/2$-tough with probability at least $1-2^{-\omega(\log n)} \geq 1-n^{-2}$. Hence the results follow from Theorem~\ref{thm: weak meta theorem}.
\end{proof}
We note that the $\log\log n$ term in Theorem~\ref{thm: weak meta theorem} and, hence, also in Theorems~\ref{thm: vertex-transitive cycle} and~\ref{thm: expander cycle} can be removed by selecting the linear forest more efficiently. This can be achieved by using the methods from \cite{lineararboricity} or \cite{montgomery}. We choose not to do this to keep the argument simple and self-contained.
\section{Concluding remarks}
Combining the methods of this paper with absorption, we have also proved that connected vertex-transitive graphs of polylogarithmic degree are Hamiltonian whenever they contain a short odd cycle or have small diameter. In particular, there exists $\varepsilon>0$ such that every connected $\Omega(n^{1-\varepsilon})$-regular vertex-transitive graph is Hamiltonian, extending the result of Bedert, Dragani\'c, M\"uyesser, and Pavez-Sign\'e~\cite{bedert-draganic-muyesser-pavez-signe} from Cayley graphs to vertex-transitive graphs. As these results require additional absorption arguments beyond the scope of this paper, we defer their proofs to subsequent work.


\vspace{0.25cm}
\noindent
{\bf Acknowledgments.}
The authors would like to thank Michael Krivelevich and David Munh\'a Correia for fruitful discussions on long cycles in tough graphs.

\vspace{0.25cm}
\noindent
{\bf Statement of AI use.}
All mathematical results and proofs in this paper were developed by the authors independently of generative AI. ChatGPT models were used only to improve the language and presentation of the manuscript.

\end{document}